\documentclass[letterpaper]{amsart}
\usepackage[all]{xy}
\usepackage{amsrefs}
\usepackage{amsthm}
\usepackage{amssymb}
\usepackage{amsmath}
\usepackage[CJKbookmarks=true]{hyperref}
\bibliographystyle{alpha}

\title{A Geometric Realization of Symmetric Pairs of Type AIII}
\author{Rui Xiong}
\keywords{symmetric pairs, quiver varieties}
\address{Academy of Mathematics and Systems Science, Chinese Academy of Sciences, Beijing, China}
\email{XiongRui\_Math@126.com}

\newtheorem{Th}{Theorem}
\newtheorem{Lemma}[Th]{Lemma}
\newtheorem{Coro}[Th]{Corollary}
\newtheorem{Prop}[Th]{Proposition}

\newtheorem{Eg}[Th]{Example}
\newtheorem{Rmk}[Th]{Remark}


\begin{document}

\def\Hom{\operatorname{Hom}}
\def\Ext{\operatorname{Ext}}
\def\Fun{\mathsf{Fun}}
\def\vv{\mathbf{v}}
\def\ww{\mathbf{w}}
\def\im{\operatorname{im}}
\def\Gr{\operatorname{\mathcal{G}\!\it r}}

\maketitle

\begin{abstract}
In this paper, we study the fixed loci of Nakajima quiver variety $\mathfrak{L}$ under an involution, which is related to the $\sigma$-quiver variety introduced in \cite{Li18}. 
We give a geometric realization of symmetric pair of type AIII over $\sigma$-quiver varieties as an analogy of the construction over Nakajima quiver varieties in \cite{Nakajima94}. 
It partially recovers the construction of sympletic partial flag varieties studied in \cite{BaoKujwaLiWang}. 
This gives an affirmative answer to a variation of a conjecture raised in \cite{Li18} for type AIII. 
To achieve this, we define the $\iota$-analogy of Hecke correspondences and study the properties of their fibres and products. 
\end{abstract}

\section{Introduction}

Let $\mathbf{U}(\mathfrak{g})$ be the universal enveloping algebra of a simple Lie algebra $\mathfrak{g}$. 
Let $\mathbf{U}_q(\mathfrak{g})$ be its quantization. In \cite{BLM1990}, the authors constructed an algebra homomorphism
\begin{equation}
\mathbf{U}_q(\mathfrak{sl}_n)\longrightarrow \operatorname{End}(\Fun(\mathcal{F}))
\end{equation}
where $\Fun(\mathcal{F})$ is the space of functions over partial flag varieties $\mathcal{F}$ of length $n$ of a vector space over finite fields. 
This action is very instructive due to the following reasons
\begin{itemize}
\item 
The weight decomposition of $\Fun(\mathcal{F})$ under this action corresponds to the decomposition of $\mathcal{F}$ into connected components, i.e. the dimension vectors of the subspaces in the flags. 
\item 
The Chevalley generators of positive/negative part of $\mathbf{U}_q(\mathfrak{sl}_n)$, up to some power of $q$, are given by ``Hall-algebra-like'' correspondences, i.e. 
``adding/lowering the flag by one dimension''. 
\item 
It is essentially a geometric realization of the representation of $N$-th symmetric power of natural representation where $N$ is the dimension of the ambient vector space. 
\end{itemize}
Their argument works over $\mathbb{C}$ either, where it gives a Lie algebra homomorphism 
$$\mathfrak{sl}_n\longrightarrow \operatorname{End}(\Fun(\mathcal{F}))$$
where $\Fun(\mathcal{F})$ is the space of constructible functions over partial flag varieties $\mathcal{F}$ of length $n$ of a vector space over $\mathbb{C}$. 

Later, Nakajima \cite{Nakajima94} gave a much more general construction for all symmetric Kac--Moody algebra and dominant weights $\ww$. To be exact, Nakajima constructed a Lie-algebra homomorphism
\begin{equation}
\mathfrak{g}\longrightarrow \operatorname{End}(\Fun(\mathfrak{L}(\ww)))
\end{equation}
where $\Fun(\mathfrak{L}(\ww))$ is the space of constructible functions over Nakajima quiver variety $\mathfrak{L}(\ww)$. 
Similarly to the construction of \cite{BLM1990}, the weight spaces correspond to the connected components, and the action is ``Hall algebra-like'', see Theorem \ref{MainThNak94}. 
It is a geometric realization of the representation of highest weight $\ww$. 
Moreover, this construction recovers the construction of \cite{BLM1990} for $\mathfrak{sl}_n$ at $q=1$ and $\ww$ is the highest weight for the symmetric power of natural representation, see Example \ref{ExampleofNakQV}. 
The correspondence is known as Hecke correspondences (after Nakajima \cite{Nakajima94} and \cite{Nakajima98}). 

From the theory of real simple Lie algebras, it is natural to consider \emph{symmetric pairs} $(\mathfrak{g},\mathfrak{k})$ where $\mathfrak{k}$ is the fixed subalgebra of $\mathfrak{g}$ under some involution.
It has a quantization $(\mathbf{U}_q(\mathfrak{g}),\mathbf{U}^\iota_q(\mathfrak{k}))$, known as the \emph{quantum symmetric pair} \cite{Letzter1999} where $\mathbf{U}^\iota_q$ is also known as the \emph{$\iota$quantum group}. 
The theory is recently strongly developed by works of Wang and his colleagues, see \cite{wang2021quantum} for a survey. 

Consider the symmetric pair $(\mathfrak{g},\mathfrak{k})$ of type AIII induced by the longest word of the Weyl group of type $A_{\text{odd}}$. 
It can be illustrated by the following diagram
$$\xymatrix@!=1pc{
\circ\ar@{-}[r]
\ar@{<..>}@/_/[d]&
\circ\ar@{-}[r]
\ar@{<..>}@/_/[d]&
\cdots\ar@{-}[r]&
\circ
\ar@{}[r];[dr]|{{\displaystyle\,\,\,\circ\,}\phantom{0}}="zero"
\ar@{-}"zero"\ar@{<..>}@/_/[d]&\ar@{..>}@(ur,dr)"zero";"zero"
\\
\circ\ar@{-}[r]&
\circ\ar@{-}[r]&
\cdots\ar@{-}[r]&
\circ\ar@{-}"zero" & 
}$$
In \cite{BaoKujwaLiWang}, the authors constructed an algebra homomorphism 
\begin{equation}
\mathbf{U}^\iota_q(\mathfrak{k})\longrightarrow \operatorname{End}(\Fun(\mathcal{F}^\iota))
\end{equation}
where $\mathcal{F}^\iota$ is the space of functions over partial flag varieties of classical types over finite fields. 

The main result of this paper is to construct an $\iota$-analogy of the Nakajima picture. 
To be exact, we will construct a Lie-algebra homomorphism 
\begin{equation}
\mathfrak{k}\longrightarrow  \operatorname{End}(\Fun(\mathfrak{R}(\ww)))
\end{equation}
for symmetric pair of type AIII, 
where $\Fun(\mathfrak{R}(\ww))$ is the space of constructible functions over the $\sigma$-quiver variety $\mathfrak{R}(\ww)$. See Theorem \ref{MainTh} for the detailed description. 

Our construction has the following features
\begin{itemize}
\item We have a decomposition $\mathfrak{R}(\ww)=\bigsqcup \mathfrak{R}(\vv,\ww)$ which corresponds to the weight decomposition. 
\item The Chevalley generators act by ``Hall-algebra-like'' correspondences, which we will refer them as \emph{$\iota$Hecke correspondences}. 
\item The partial flag varieties $\mathcal{F}^\iota$ of type $C$ appears as a special case of $\mathfrak{R}(\ww)$, see Example \ref{ExampleofsigmaNakQV}. 
\end{itemize}
Since the representation theory for symmetric pair is not clear at the present stage, we cannot describe which representation our geometric realization gives. 

In \cite{Li18}, the $\sigma$-quiver variety $\mathfrak{S}(\ww)$ was introduced to give an $\iota$-analogy of Nakajima's picture.
Our $\sigma$-quiver variety $\mathfrak{R}(\ww)$ is a closed subvariety of the $\sigma$-quiver $\mathfrak{S}(\ww)$, which is parallel to the Nakajima quiver variety $\mathfrak{L}(\ww)$ and $\mathfrak{M}(\ww)$, see Remark \ref{Rmkonsigmavariety} and Theorem \ref{sigmacoincides}. 
It was conjectured that there is a Lie algebra homomorphism 
\begin{equation}
\mathfrak{k}\longrightarrow  
H_{\text{top}}^{\textsf{BM}}(\mathfrak{Z}(\ww))
\end{equation}
where $H^{\textsf{BM}}_{\text{top}}(\mathfrak{Z}(\ww))$ is the algebra of top degree part of Borel--Moore homology of certain Steinberg variety under convolution product. This is the $\iota$-analogy to Borel--Moore homology construction in \cite{Nakajima98}. 
The special cases of partial flag variety of classical types are checked recently in \cite{Li21Quasispli} which is parallel to the Springer theory of $\mathbf{U}(\mathfrak{sl}_n)$, see \cite[Chapter 4]{CG1997}. 
Our result can be viewed as a variation of this conjecture. 
The existence of such homomorphism can also be viewed as evidence of this conjecture. 

With the development of geometric representation theory, it is noted that geometric realization usually helps us understand the algebra itself. 
But the problem of geometric realization of (quantum) symmetric pair is still very open, see \cite[Section 9 (2)]{wang2021quantum}. From our proof, we see that similar construction for $\sigma$-quiver variety might be difficult for other types. 
Due to the private discussion with Wang, the reason would be that the $\sigma$-quiver varieties are defined by Vogan diagrams rather than Satake diagrams. 
It is worth mentioning that there are other geometric ways to realize quantum symmetric pairs, for example \cite{LuWang2021}. It is closely related to the Hall algebra construction \cite{LuWang2019}. 
Though there is no evidence that it is related to the partial flag varieties of type B or C of \cite{BaoKujwaLiWang} in a geometric way. 

Last but no mean least, to achieve an action of (affine) $\iota$quantum group, it is plausible to use equivariant K-theory as the $\iota$-analogy of \cite{Nakajima2000}. 
Note that the Faddeev--Reshetikhin--Takhtajan construction of symmetric pair of Yangian was achieved in \cite{Li18}, which is parallel to the construction of \cite{MaulikOkounkovQuantum}. 
But it cannot be moved to equivariant K-theory directly, since we do not know whether a K-theoretic stable envelope exists over $\sigma$-quiver variety. 


\subsection*{Acknowledgment.} 
The author thanks Yiqiang Li, Ming Lu, Quan Situ, Weiqiang Wang and Yehao Zhou for the discussion. The author would express his gratitude to Weiqiang Wang for his nice lectures on $\iota$program. When preparing this paper, the author was visiting the Academy of Mathematics and Systems Science, the Chinese Academy of Science. The author would be grateful for the invitation of Si'an Nie and the nice hospitality.


\section{Nakajima quiver varieties}

Let $Q$ be the Dynkin diagram of type $A_{2d-1}$ labelled as follows 
$$\xymatrix{
\underset{\hspace{-2pc}1\hspace{-2pc}}\circ\ar@{-}[r]&
\underset{\hspace{-2pc}2\hspace{-2pc}}\circ\ar@{-}[r]&
\cdots\ar@{-}[r]&
\underset{\hspace{-2pc}2d-2\hspace{-2pc}}\circ\ar@{-}[r]&
\underset{\hspace{-2pc}2d-1\hspace{-2pc}}\circ
}$$
Denote $\mathbb{I}=\{1,\ldots,2d-1\}$ the index set of vertices. 
Let $(c_{ij})$ be the Cartan matrix. 
That is, 
$$c_{ij}=\begin{cases}2, & i=j,\\ -1, & |i-j|=1,\\ 0,& \text{otherwise}.\end{cases}$$

We construct a new quiver $Q^{\textsf{fr}}$ as follows.
The vertices of $Q^{\textsf{fr}}$ is $\mathbb{I}\cup \mathbb{I}_{\mathsf{fr}}$ with $\mathbb{I}_{\textsf{fr}}$ a copy of $\mathbb{I}=\{\underline{i}:i\in\mathbb{I}\}$. 
For unframed vertices, we have an arrow $i\to j$ if there is an edge in $Q$. Besides, we have an arrow from $i\to \underline{i}$ for each $i\in \mathbb{I}$. Here is an example of $A_3$.
$$\xymatrix{\square&\square&\square\\
\circ\ar[u]\ar@<+0.5ex>[r]&\ar@<+0.5ex>[l]\circ\ar[u]\ar@<+0.5ex>[r]&\ar@<+0.5ex>[l]\circ\ar[u]}$$
For any arrow $h:i\to j$, denote $\bar{h}:j\to i$ the reversed arrow. 
Let $\mathcal{L}$ be the $\mathbb{C}$-linear category generated by the quiver $Q^{\textsf{fr}}$ with \emph{commutative relations} for each $i\in \mathbb{I}$, 
\begin{equation}
\sum_{h:i\to j} \pm \bar{h}h=0, \text{ with } \pm=\begin{cases}
1, & j=i+1,\\
-1, & j=i-1. 
\end{cases}\label{commutativeRelation}
\end{equation}
This is in principle the category introduced in \cite{KellerScherotzke13}. 
Denote $\mathcal{Q}$ (resp. $\mathcal{S}$) be the full subcategory generated by unframed vertices (resp. framed vertices). 
Note that $\mathcal{S}$ is discrete.

\def\ddim{\operatorname{\mathbf{dim}}}
A \emph{framed representation} is a $\mathbb{C}$-linear functor from $\mathcal{L}$ to $\mathbb{C}\text{-}\mathsf{Vec}$ the category of $\mathbb{C}$-vector spaces. 
Morphisms of framed representations are defined to be natural transforms. 
We say a representation $F$ is \emph{stable} if 
for any representation $G$ 
\begin{equation}
G|_{\mathcal{S}}=0 \Longrightarrow \Hom(G,F)=0.
\end{equation}
Let $\ww=(w_i)_{i\in \mathbb{I}}$ and $\vv=(v_i)_{i\in \mathbb{I}}$ be two dimension vectors. 
The \emph{Nakajima quiver variety} $\mathfrak{L}(\vv,\ww)$ is the moduli space of stable framed representations $F$ with 
\begin{equation}
\ddim F|_{\mathcal{Q}}=\vv,\text{ and }
\ddim F|_{\mathcal{S}}=\ww.
\end{equation}
where $\ddim X\in \mathbb{N}^{\mathbb{I}}$ is the dimension vector for any $\mathbb{I}$-graded vector space $X$.
This coincides with Nakajima's original definition \cite{Nakajima94}, see \cite{KellerScherotzke13} for the proof. 
We will write $\mathfrak{L}(\ww)=\bigsqcup \mathfrak{L}(\vv,\ww)$. 

Assume we are given a functor $W$ from $\mathcal{S}$ to  $\mathbb{C}\text{-}\mathsf{Vec}$, i.e. an $\mathbb{I}$-graded vector space $(W(i))_{i\in \mathbb{I}}$. 
We can define a stable framed representation $K_RW$ with $K_RW|_{\mathcal{S}}=W$ as follows. 
For each unframed vertice $i$, 
\begin{equation}
K_RW(i)=\bigoplus_{j\in \mathbb{I}}
\Hom_{\mathbb{C}}(\mathcal{Q}(i,j),W(j)).
\end{equation}
For each arrow $h:i\to i'$ in $\mathcal{Q}$, 
\begin{equation}
K_RW(h)= \bigg[(f_j)\longmapsto (f_j\circ \mathcal{Q}(h,j))\bigg]: K_RW(i)\longrightarrow K_RW(j).
\end{equation}
For each arrow $\alpha:i\to \underline{i}$, 
\begin{equation}
K_RW(\alpha)= \bigg[(f_j)\longmapsto f_i(\text{id}_i)\bigg]: K_RW(i)\longrightarrow W(i). 
\end{equation}
For any representation $F$ with $F|_{\mathcal{S}}=W$, there is morphism of framed representation $\epsilon:F\to K_RW$ given by evaluation, say
\begin{equation}
\epsilon(i)(v)=(f_j)\text{ with } f_j(p)=\alpha_j(p(v)),
\end{equation}
where $\alpha_j:F(j)\to W(j)$ the morphism corresponding to $j\to \underline{j}$.

Actually, $K_R$ is the right adjoint functor (so-called right Kan extension, see \cite{KellerScherotzke13}) of restriction  functor from $\mathcal{L}$ to $\mathcal{S}$.

\begin{Prop}[Keller and Scherotzke
{\cite{KellerScherotzke13}}]\label{LissubofKRW}
The representation $F$ is stable if and only if the natural map $\epsilon: F\to K_RW$ is injective. 
In particular, $\mathfrak{L}(\vv,\ww)$ is the quiver Grassmannian of $K_RW|_{\mathcal{Q}}$ for any $W$ such that $\ddim W=\ww$. 
\end{Prop}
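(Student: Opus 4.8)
The plan is to unpack the definition of stability and match it with the injectivity of $\epsilon\colon F\to K_RW$. First I would recall that $K_R$ is, by construction, the right adjoint to the restriction functor $(-)|_{\mathcal{S}}\colon \mathcal{L}\text{-}\mathsf{rep}\to \mathcal{S}\text{-}\mathsf{rep}$, so for any framed representation $G$ one has a natural isomorphism $\Hom_{\mathcal{L}}(G,K_RW)\cong \Hom_{\mathcal{S}}(G|_{\mathcal{S}},W)$, and the unit of this adjunction applied to $F$ (with $W=F|_{\mathcal{S}}$ and adjoint map $\mathrm{id}$) is precisely the evaluation morphism $\epsilon$ written out explicitly above. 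The kernel $G=\ker\epsilon$ is a subrepresentation of $F$, and I would observe that $G|_{\mathcal{S}}=0$: on a framed vertex $\underline i$ the map $\epsilon(\underline i)$ is the identity (since $K_RW|_{\mathcal{S}}=W=F|_{\mathcal S}$), so nothing is killed there. Conversely, if $G$ is any subrepresentation of $F$ with $G|_{\mathcal{S}}=0$, then $G$ maps to $0$ under every component $\alpha_j\colon F(j)\to W(j)$, and chasing the formula for $\epsilon$ shows $G\subseteq \ker\epsilon$; hence $\ker\epsilon$ is the \emph{largest} subrepresentation of $F$ supported away from $\mathcal S$.

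With this identification, the equivalence is immediate in both directions. If $\epsilon$ is injective and $G$ is any representation with $G|_{\mathcal S}=0$, then any morphism $G\to F$ has image a subrepresentation of $F$ vanishing on $\mathcal S$, hence contained in $\ker\epsilon=0$, so $\Hom(G,F)=0$ and $F$ is stable. Conversely, if $F$ is stable, apply the stability condition to $G=\ker\epsilon$ itself: we have just shown $G|_{\mathcal S}=0$, so $\Hom(G,F)=0$; but $G\hookrightarrow F$ is a nonzero morphism unless $G=0$, forcing $\ker\epsilon=0$. The only genuinely substantive point is verifying that $\ker\epsilon$ really is a subrepresentation with trivial restriction to $\mathcal S$ and that it absorbs all such subrepresentations — i.e. the description of $\epsilon$ as the adjunction unit — which is where the explicit formulas for $K_RW$ on arrows and for $\epsilon(i)(v)=(f_j)$ with $f_j(p)=\alpha_j(p(v))$ get used; this is routine diagram-chasing once one trusts the Kan-extension description from \cite{KellerScherotzke13}.

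For the second assertion, once $F\rightsquigarrow \epsilon$ identifies stable representations $F$ with $\ddim F|_{\mathcal S}=\ww$ with subrepresentations $\im(\epsilon)\subseteq K_RW$ restricting to all of $W$ on $\mathcal S$, I would note that any such subrepresentation is determined by its restriction to $\mathcal{Q}$ — a subrepresentation of $K_RW|_{\mathcal Q}$ of dimension vector $\vv$ — because the component at each framed vertex is forced to be the full $W(i)$ by the requirement $\ddim F|_{\mathcal S}=\ww=\ddim W$ together with the fact that $K_RW(\alpha)\colon K_RW(i)\to W(i)$ is surjective (it sends $(f_j)\mapsto f_i(\mathrm{id}_i)$, which hits all of $W(i)$), so a subrepresentation surjecting onto $W(i)$ plus the $\mathcal Q$-part already closes up under all arrows. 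Conversely any $\vv$-dimensional subrepresentation of $K_RW|_{\mathcal Q}$ extends uniquely to a subrepresentation of $K_RW$ containing $W$ on the frame. This sets up a bijection, natural in $W$, between $\mathfrak{L}(\vv,\ww)$ and $\Gr_{\vv}(K_RW|_{\mathcal Q})$, and checking it is an isomorphism of varieties (not just a bijection of points) is a matter of noting both sides are cut out by the same incidence conditions inside a product of ordinary Grassmannians.

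The main obstacle, modest as it is, will be the bookkeeping in the first paragraph: making fully precise that $\ker\epsilon$ is the maximal subrepresentation of $F$ with vanishing restriction to $\mathcal S$, i.e. that the failure of injectivity of $\epsilon$ is \emph{exactly} detected by subrepresentations supported on the unframed part. Everything downstream — both directions of the stability equivalence and the quiver-Grassmannian reformulation — follows formally from that and from the surjectivity of $K_RW(\alpha)$ at each framed vertex. I expect to cite \cite{KellerScherotzke13} for the adjunction and spend the bulk of the written proof on the clean translation into quiver Grassmannians.
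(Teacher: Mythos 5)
Your argument is correct, and it is worth noting that the paper itself offers no proof of this proposition at all --- it is stated with attribution to Keller--Scherotzke and the reader is sent to \cite{KellerScherotzke13}. So there is nothing in the paper to compare against except the citation; your write-up essentially reconstructs the standard argument from that reference. The key points are all in place: $\epsilon$ is the unit of the adjunction between restriction to $\mathcal{S}$ and $K_R$, it is the identity on framed vertices, and $\ker\epsilon$ is precisely the maximal subrepresentation of $F$ vanishing on $\mathcal{S}$ (naturality of $\epsilon$ gives that $\ker\epsilon$ is a subrepresentation; the explicit formula $f_j(p)=\alpha_j(p(v))$ gives maximality, since any $G\subseteq F$ with $G|_{\mathcal{S}}=0$ satisfies $\alpha_j(p(v))\in G(\underline{j})=0$ for $v\in G(i)$). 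Both directions of the equivalence and the identification with the quiver Grassmannian then follow formally, as you say. One cosmetic inaccuracy: in the second assertion you invoke the surjectivity of $K_RW(\alpha)$ to force the framed components of $\im(\epsilon)$ to be all of $W(i)$, but what actually does the work is simply that $\epsilon(\underline{i})$ is the identity on $W(i)$; surjectivity of $K_RW(\alpha)$ is not needed (and you should also record, as you implicitly do via the adjunction $\Hom(G,K_RW)\cong\Hom(G|_{\mathcal{S}},W)=0$, that the extension of a $\mathcal{Q}$-subrepresentation by the full frame is automatically stable, so that the bijection really lands in $\mathfrak{L}(\vv,\ww)$).
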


\begin{Prop}\label{dimKRW}
Let $\vv'=\ddim K_RW|_{\mathcal{Q}}$. 
Then $w_0\ww=\ww-C\vv'$ for the longest element $w_0$ of Weyl group. 
\end{Prop}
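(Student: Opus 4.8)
The plan is to compute the dimension vector $\vv' = \ddim K_RW|_{\mathcal{Q}}$ explicitly in terms of $\ww$ and then verify the identity $w_0\ww = \ww - C\vv'$ directly, using the known combinatorics of the longest element $w_0$ in type $A_{2d-1}$. By definition, $\dim K_RW(i) = \sum_{j\in\mathbb{I}} \dim\Hom_{\mathbb{C}}(\mathcal{Q}(i,j), W(j)) = \sum_{j\in\mathbb{I}} \dim \mathcal{Q}(i,j)\cdot w_j$, so the first step is to determine the dimensions of the Hom-spaces $\mathcal{Q}(i,j)$ in the category $\mathcal{Q}$ generated by the doubled quiver of type $A_{2d-1}$ subject to the commutative (preprojective-type) relations \eqref{commutativeRelation}.

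First I would compute $\dim\mathcal{Q}(i,j)$. Because $\mathcal{Q}$ is (a truncation of) the preprojective algebra of type $A_{2d-1}$ — or can be analyzed via the mesh relations of \cite{KellerScherotzke13} — the space $\mathcal{Q}(i,j)$ of morphisms modulo relations is at most one-dimensional for each pair $(i,j)$, being nonzero exactly when $i$ and $j$ lie in a suitable range dictated by the AR-quiver; concretely I expect $\dim\mathcal{Q}(i,j) = 1$ when $\min(i,j)+\max(i,j) \le 2d$ (equivalently when a certain path survives), and $0$ otherwise, which is just the statement that $K_RW|_{\mathcal{Q}}$ restricted to vertex $i$ is the injective envelope $I_i$ of the simple at $i$ for the path algebra, with its dimension vector the $i$-th column of the inverse Cartan-type matrix. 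Plugging this in gives $v'_i = \sum_{j} [\![\, i+j\le 2d\,]\!]\, w_j$ (up to the exact shape of the support, which I would pin down from the mesh relations).

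Next I would identify $w_0$. For type $A_{2d-1}$ the longest element acts on the weight lattice by $-w_0 = $ the diagram automorphism $i\mapsto 2d-i$, so $w_0\ww$ is the vector with $i$-th coordinate $-w_{2d-i}$ in the simple-root basis; transporting this to the fundamental-weight basis in which $\ww$ and $\vv$ naturally live, the relation $w_0\ww = \ww - C\vv'$ becomes a finite linear identity $w_{2d-i}^{\vee\text{-part}} = \ww_i - \sum_j c_{ij}v'_j$ which I would check by a direct summation: $\sum_j c_{ij}v'_j$ telescopes (since $C$ is tridiagonal, the sum over $j$ of $c_{ij}v'_j = 2v'_i - v'_{i-1} - v'_{i+1}$ collapses the stair-step structure of $v'$) to leave exactly $\ww_i + \ww_{2d-i}$, or the appropriate rearrangement giving $\ww - C\vv' = -w_0\ww$.

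The main obstacle is the first step: getting the dimensions $\dim\mathcal{Q}(i,j)$ exactly right, including the precise description of the support (the inequality cutting out where the Hom-space is nonzero) and the verification that it is never larger than one-dimensional. This is where the commutative relations \eqref{commutativeRelation} and the finiteness of type $A$ are essential — in type $A$ the relevant injective/projective modules over the path algebra have multiplicity-free dimension vectors, so the mesh (AR) combinatorics is clean, but one must be careful that the relations are exactly the preprojective ones and not merely commutativity of the square, and handle the endpoints $i=1$ and $i=2d-1$ separately. Once $\vv'$ is in hand, the remaining identity is a short bookkeeping computation with the tridiagonal Cartan matrix and the order-two symmetry $i\leftrightarrow 2d-i$, so I do not anticipate difficulty there. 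Alternatively, one can bypass the explicit formula entirely: since $K_RW|_{\mathcal{Q}}$ is the injective cogenerator with socle $W$, its class in the Grothendieck group is $\sum_i w_i [I_i]$, and in type $A$ one has $[I_i] = $ the weight $-w_0(\varpi_i)$ expressed via $[I_i] = \sum \dim(\cdots)[S_j]$, so $w_0\ww + C\vv' = w_0\ww + \sum_i w_i\, C[I_i] = w_0\ww + \sum_i w_i(\varpi_i + w_0\varpi_i\text{-correction}) = \ww$ by the defining property of injectives in the Grothendieck group; making that argument precise is the cleanest route and I would present it in parallel with the explicit computation as a sanity check.
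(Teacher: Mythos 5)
Your strategy (compute $\vv'$ explicitly, then verify the tridiagonal identity) is a genuinely different route from the paper's, but as written it has a gap exactly at the step you flag as the main obstacle, and your proposed resolution of that step is wrong. The Hom-spaces $\mathcal{Q}(i,j)$ are \emph{not} at most one-dimensional, and $K_RW(i)$ is not an injective envelope for a path algebra of an orientation of $Q$: the correct dimensions are $\dim\mathcal{Q}(i,j)=\min(i,j,2d-i,2d-j)$, which reaches $d$ at $i=j=d$. The paper's own Example \ref{SmallEgA} already refutes your guess: for $A_3$ with $W$ concentrated at the middle vertex one has $(K_RW)(2)=W\oplus W$, i.e.\ $\dim\mathcal{Q}(2,2)=2$. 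Consequently your formula $v_i'=\sum_j[\![i+j\le 2d]\!]w_j$ gives the wrong $\vv'$ and the identity $C\vv'=\ww+\sigma\ww$ (which is what $w_0\ww=\ww-C\vv'$ amounts to, since $w_0=-\sigma$ on weights in type $A_{2d-1}$) would fail. With the correct multiplicities the plan does go through: one checks $2m_{ij}-m_{i-1,j}-m_{i+1,j}=\delta_{ij}+\delta_{\sigma(i)j}$ for $m_{ij}=\min(i,j,2d-i,2d-j)$, equivalently $(m_{ij})=C^{-1}(I+\sigma)$. So the computation is salvageable, but you must actually establish the dimension formula from the relations (\ref{commutativeRelation}), which you have not done; the Grothendieck-group ``sanity check'' you sketch at the end is too vague to substitute for it.

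For comparison, the paper avoids all of this: by Proposition \ref{LissubofKRW}, $\mathfrak{L}(\vv,\ww)$ is the quiver Grassmannian of $K_RW|_{\mathcal{Q}}$, so $\vv'$ is the largest dimension vector with $\mathfrak{L}(\vv',\ww)\neq\emptyset$ (a single point), and by \cite{Nakajima98} the variety $\mathfrak{L}(\vv,\ww)$ realizes the weight-$(\ww-C\vv)$ space of the highest weight module $L(\ww)$; hence $\ww-C\vv'$ is the lowest weight $w_0\ww$. That argument requires no knowledge of the individual $\dim\mathcal{Q}(i,j)$ at all.
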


\begin{proof}
By \cite{Nakajima98}, $\mathfrak{L}(\vv',\ww)$ corresponds to the space of lowest weight $\ww-C\vv'$ of the representation of highest weight $\ww$. 
\end{proof}

\begin{Eg}\label{ExampleofNakQV} Consider the following diagram
$$\xymatrix@!=1pc{
\fbox{$n$}\\
\circ\ar[u]\ar@<+0.5ex>[r]&\ar@<+0.5ex>[l]\circ\ar@<+0.5ex>[r]&\ar@<+0.5ex>[l]\cdots\ar@<+0.5ex>[r]&\ar@<+0.5ex>[l]\circ
}$$
That is, $\ww$ has only one nonzero component $n$ at the left end of Dynkin diagram of type $A$. 
Let $W$ be an vector space of dimension $n$. 
Then $K_RW$ is the representation
$$\xymatrix{
W\\
\ar[u]^{\text{id}}W\ar@<+0.5ex>[r]^{0}&
\ar@<+0.5ex>[l]^{\text{id}} W\ar@<+0.5ex>[r]^{0}&
\ar@<+0.5ex>[l]^{\text{id}} \cdots \ar@<+0.5ex>[r]^{0}&
\ar@<+0.5ex>[l]^{\text{id}} W 
}$$
The subrepresentations of $K_RW|_{\mathcal{Q}}$ is simply the partial flags of $W$. 
In particular, $\mathcal{L}(\ww)$ is the variety of partial flags of $W$.  
\end{Eg}

\begin{Eg}\label{SmallEgA}Consider the following diagram 
$$\xymatrix{
&\fbox{$n$}\\
\circ\ar@<+0.5ex>[r]&\ar@<+0.5ex>[l]
\circ\ar[u]\ar@<+0.5ex>[r]&\ar@<+0.5ex>[l]
\circ.
}$$
That is, $\ww$ has only one nonzero component $n$ at the middle vertex of Dynkin diagram $A_3$. 
Let $W$ be a vector space of dimension $n$. Now
$K_RW$ is exactly 
$$\xymatrix{
&W\\
\ar@{}[rr]^{\displaystyle W}="uW"_{\displaystyle W}="dW"
W\ar"uW"\ar"dW";[]&\ar"uW";[u]
&W\ar"uW"\ar"dW";[],\\
}$$
where all illustrated arrows are identities, and other arrows (not figured out) are zero. 
To be exact, $(K_RW)(i)=W\oplus W$ for the middle vertex $i$ and $(K_RW)(i)=W$ for the left or right vertices $i$. 
Actually, this representation achieves the dimension predicted in Proposition \ref{dimKRW}, and it is stable, thus it is exactly $K_RW$. 
In particular, when $n=1$, each nonempty $\mathfrak{L}(\vv,\ww)$ is a point. 
\end{Eg}

\begin{Eg}\label{SmallEgB}Consider the following diagram 
$$\xymatrix{
\fbox{$n$}&&\fbox{$m$}\\
\circ\ar[u]\ar@<+0.5ex>[r]&\ar@<+0.5ex>[l]
\circ\ar@<+0.5ex>[r]&\ar@<+0.5ex>[l]
\circ\ar[u].}$$
Let $N$ (resp. $M$) be a vector space of dimension $n$ (resp. $m$). Then $K_RW$ can be representated by the following diagram
$$\xymatrix{
\ar@{}[r]|>>{\displaystyle N}="tN"
&&&\ar@{}[r]|<<{\displaystyle M}="tM"
&\\
\ar@{}[r]_>>{\displaystyle N}="aN"
&\ar@{}[r]^<<{\displaystyle M}="aM"
\ar@{}[r]_>>{\displaystyle N}="bN"
&\ar@{}[r]^<<{\displaystyle M}="bM"
\ar@{}[r]_>>{\displaystyle N}="cN"
&\ar@{}[r]^<<{\displaystyle M}="cM"&
\ar"aN";"tN"\ar"bN";"aN"\ar"cN";"bN"
\ar"cM";"tM"\ar"aM";"bM"\ar"bM";"cM"
}$$
Actually, this representation achieve the dimension predicted in Proposition \ref{dimKRW}, and it is stable, thus it is exactly $K_RW$.
In particular, when $n=1$, and $\vv=(1,1,1)$, $\mathfrak{L}(\vv,\ww)$ is a union of three copies of $\mathbb{P}^1$ with intersection diagram $A_3$, see \cite{Nakajim1999}. 
\end{Eg}

\section{Geometric realization of $\mathbf{U}$}

Let $\mathfrak{g}$ be the complex simple Lie algebra correspondent to $Q$. 
By a theorem of Serre, for example \cite{Knapp2002}, $\mathfrak{g}$ is the Lie algebra generated by $E_i$, $F_i$ and $H_i$ for $i\in \mathbb{I}$ with relations 
\begin{gather}
[H_i,H_j]=0, \quad 
[H_i,E_j]=c_{ij}E_j,\quad
[H_i,F_j]=-c_{ij}F_j\\
[E_i,F_j]=\delta_{ij}H_i\\
[E_i,E_j]=[F_i,F_j]=0\quad \text{ if $c_{ij}=0$}\\
[E_i, [E_i,E_i]]=[F_i, [F_i,F_i]]=0 \quad \text{ if $c_{ij}=-1$}
\end{gather}
Here $\delta_{ij}$ stands the Kronecker's delta. 
Denote $\mathbf{U}$ the universal enveloping algebra of $\mathfrak{g}$. 
Nakajima \cite{Nakajima94} constructed a $\mathbf{U}$-action using the constructible functions $\Fun(\mathfrak{L}(\ww))$ over $\mathfrak{L}(\ww)$. 
We will shortly review the theory of the basic properties of such space. 

Let $X$ be a variety over $\mathbb{C}$. 
Denote $\chi(X)$ the \emph{Euler characteristic} of $X$, that is
\begin{equation}
\chi(X)=\sum_n(-1)^n\dim H^n_c(X;\mathbb{Q}),
\end{equation}
where $H_c^n(X;\mathbb{Q})$ is the cohomology of compact support. The following properties are well-known. 

\begin{Prop} We have the following properties of $\chi$. 
\begin{itemize}
    \item For a closed subvariety $F\subseteq X$, with complement $U=X\setminus F$
    \begin{equation}
    \chi(F)+\chi(U)=\chi(X).\label{additionprinciple}
    \end{equation}
    \item For a fibre bundle $X\to B$ with fibre $F$
    \begin{equation}
    \chi(F)\chi(B)=\chi(X).
    \end{equation}
\end{itemize}
\end{Prop}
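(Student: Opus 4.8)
The statement to prove consists of two classical facts about the compactly-supported Euler characteristic: additivity over a closed–open decomposition, and multiplicativity over fibre bundles. Both follow from the long exact sequence and the Künneth/Leray machinery for compactly supported cohomology, and I would simply recall the standard arguments.

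For the additivity statement, the plan is to invoke the long exact sequence of the pair $(X,U)$ for cohomology with compact support, namely
\begin{equation}
\cdots\longrightarrow H_c^n(U;\mathbb{Q})\longrightarrow H_c^n(X;\mathbb{Q})\longrightarrow H_c^n(F;\mathbb{Q})\longrightarrow H_c^{n+1}(U;\mathbb{Q})\longrightarrow\cdots
\end{equation}
valid because $U=X\setminus F$ is open in $X$ with closed complement $F$. Since each $H_c^n$ is finite dimensional over $\mathbb{Q}$ (the varieties in question are of finite type), taking the alternating sum of dimensions along a long exact sequence of finite-dimensional spaces gives zero, which yields $\chi(U)-\chi(X)+\chi(F)=0$, i.e. \eqref{additionprinciple}.

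For the multiplicativity statement, I would first treat the trivial bundle $X=B\times F$ by the Künneth formula for compactly supported cohomology, $H_c^n(B\times F;\mathbb{Q})\cong\bigoplus_{p+q=n}H_c^p(B;\mathbb{Q})\otimes H_c^q(F;\mathbb{Q})$, from which $\chi(B\times F)=\chi(B)\chi(F)$ is immediate by the usual bookkeeping with signs. For a general (locally trivial) fibre bundle $X\to B$ with fibre $F$, the cleanest route is to stratify $B$ into finitely many locally closed subvarieties over each of which the bundle is trivial, apply the local computation on each stratum, and then assemble the result using the additivity already established. The one point deserving care is that ``fibre bundle'' here is understood in a sense compatible with stratification — for instance a Zariski-locally trivial bundle, or more generally a map admitting a finite stratification of the base with trivial restrictions — so that the global formula reduces to the trivial case; I would state the hypothesis accordingly. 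The main (and only real) obstacle is purely expository: being precise about which category of ``fibre bundles'' is meant so that the reduction to local triviality is legitimate; once that is fixed, the proof is a routine combination of the long exact sequence, Künneth, and finite additivity, and I would not belabor it.
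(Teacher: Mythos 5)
Your argument is correct and is the standard one; the paper itself offers no proof here, simply declaring both properties well-known, so there is nothing to diverge from. The only substantive point you raise---that ``fibre bundle'' must be interpreted so that the base admits a finite stratification over which the bundle trivializes---is handled automatically in the paper's applications (projective bundles and Grassmannian bundles, which are Zariski-locally trivial), so your proof covers every use the paper makes of the proposition.
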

Let $\Fun(X)$ be the space of constructible functions with value in $\mathbb{Z}$ over algebraic variety $X$. 
For any morphism $f:X\to Y$, we have a natural \emph{pull back} $f^*:\Fun(Y)\to \Fun(X)$ by 
\begin{equation}
f^*\psi=\psi\circ f.
\end{equation}
We can define \emph{push forward}
$f_*:\Fun(X)\to \Fun(Y)$  by 
\begin{equation}
(f_*\varphi)(y)=\sum_{n}\chi\bigg(\{x\in X:f(x)=y,\varphi(x)=n\}\bigg)n.
\end{equation}
with $n$ going through all values of image $\psi$.

The followng proposition is standard. 
\begin{Prop}
These functors have the following properties 
\begin{itemize}
\item For any morphism $f:X\to Y$ and $\psi_\bullet\in \Fun(Y)$ for $\bullet=1,2$ 
\begin{equation}
f^*\psi_1\cdot f^*\psi_2=f^*(\psi_1\cdot \psi_2).
\end{equation}
\item For any morphism $f:X\to Y$, $\varphi\in \Fun(X)$ and $\psi\in \Fun(Y)$, 
\begin{equation}
f_*(\varphi\cdot f^*\psi)=f_*\varphi\cdot \psi.
\end{equation}
\item Let $i:X\to Y$ be an inclusion of subvariety. For any $\psi\in \Fun(Y)$, 
\begin{equation}
i_*i^*\varphi = \mathbf{1}_{X}\cdot \varphi
\end{equation}
where $\mathbf{1}_X$ is the characteristic function of $X$. 
\item Let $f:X\to Y$ be any morphism. We have 
\begin{equation}
f_*\mathbf{1}_X(y)=\chi(f^{-1}(y)). 
\end{equation}
\item 
Assume we have a Cartesian square
\begin{equation}
\begin{array}{@{}c@{}}\xymatrix{
X'\ar[d]_{f'}\ar[r]^q& X\ar[d]^f\\
Y'\ar[r]_p & Y.}\end{array}
\end{equation} 
Then, for any $\varphi\in \Fun(X)$, 
\begin{equation}
f'_*q^*\varphi=p^*f_*\varphi\label{Cartesiansquare}
\end{equation}
\end{itemize}
\end{Prop}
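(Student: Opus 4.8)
The plan is to deduce every one of the listed identities from the two properties of the Euler characteristic $\chi$ recorded in the preceding proposition, using linearity to cut the general case down to characteristic functions. Concretely, $f^{*}$, $f_{*}$ and the multiplication of constructible functions are all additive (and bilinear where two arguments occur), and every constructible function on a variety $X$ is a finite $\mathbb{Z}$-linear combination of characteristic functions $\mathbf{1}_{Z}$ of locally closed subvarieties $Z\subseteq X$; so it is enough to verify each identity when every constructible function in sight has this form. For such functions the definitions give at once $f^{*}\mathbf{1}_{Z}=\mathbf{1}_{f^{-1}(Z)}$, $\mathbf{1}_{Z_{1}}\cdot\mathbf{1}_{Z_{2}}=\mathbf{1}_{Z_{1}\cap Z_{2}}$, and $(f_{*}\mathbf{1}_{Z})(y)=\chi\big(f^{-1}(y)\cap Z\big)$, the last because $\mathbf{1}_{Z}$ takes only the values $0$ and $1$. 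Note that this last formula already yields the fourth identity (take $Z=X$) and, after reducing $\varphi$ to a characteristic function and using $\chi(\mathrm{pt})=1$, $\chi(\varnothing)=0$, the third identity (take $f$ an inclusion).

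The first identity is then immediate, both sides being $\mathbf{1}_{f^{-1}(Z_{1})\cap f^{-1}(Z_{2})}=\mathbf{1}_{f^{-1}(Z_{1}\cap Z_{2})}$. For the projection formula I would take $\varphi=\mathbf{1}_{Z}$, $\psi=\mathbf{1}_{Z'}$ and evaluate at a point $y$: the left-hand side is $\chi\big(f^{-1}(y)\cap Z\cap f^{-1}(Z')\big)$, and a one-line case distinction finishes it — if $y\in Z'$ then $f^{-1}(y)\subseteq f^{-1}(Z')$, so this is $\chi\big(f^{-1}(y)\cap Z\big)=\big(f_{*}\varphi\cdot\psi\big)(y)$, while if $y\notin Z'$ then $f^{-1}(y)\cap f^{-1}(Z')=\varnothing$ and both sides vanish. (Equivalently, one may observe directly that $f^{*}\psi$ is constant equal to $\psi(y)$ on the fibre $f^{-1}(y)$ and pull the scalar $\psi(y)$ out of the defining sum for $f_{*}$, treating $\psi(y)=0$ separately.)

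The one identity that uses a genuine geometric input is the base change formula. Here I would fix $y'\in Y'$, set $y=p(y')$, and use that the square is Cartesian, i.e. $X'=Y'\times_{Y}X$, to see that $q$ restricts to an isomorphism of set-theoretic fibres $(f')^{-1}(y')\xrightarrow{\sim}f^{-1}(y)$; since $\chi$ depends only on the underlying reduced — indeed only the underlying topological — space, any non-reduced structure in the fibre product is irrelevant. Taking $\varphi=\mathbf{1}_{Z}$, this isomorphism carries $(f')^{-1}(y')\cap q^{-1}(Z)$ onto $f^{-1}(y)\cap Z$, so $(f'_{*}q^{*}\varphi)(y')=\chi\big((f')^{-1}(y')\cap q^{-1}(Z)\big)=\chi\big(f^{-1}(y)\cap Z\big)=(f_{*}\varphi)(y)=(p^{*}f_{*}\varphi)(y')$, and the general case follows by linearity. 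The main (and essentially the only) obstacle is the careful bookkeeping around the Cartesian square together with the observation that $\chi$ is insensitive to nilpotents; the remaining identities are pure unwinding of definitions, which is why the proposition is labelled standard.
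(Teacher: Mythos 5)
Your proof is correct. The paper itself offers no argument here --- it simply declares the proposition standard --- so there is nothing to compare against; your write-up is the standard argument, correctly executed: reduce by linearity to characteristic functions of locally closed subvarieties, compute $f^*\mathbf{1}_Z=\mathbf{1}_{f^{-1}(Z)}$, $\mathbf{1}_{Z_1}\cdot\mathbf{1}_{Z_2}=\mathbf{1}_{Z_1\cap Z_2}$ and $(f_*\mathbf{1}_Z)(y)=\chi(f^{-1}(y)\cap Z)$, and then each identity is either definitional or, in the base-change case, reduces to the set-theoretic identification of fibres in a Cartesian square together with the topological invariance of $\chi$. The only point worth making explicit is that the additivity of $f_*$ (which your reduction to characteristic functions silently uses) is itself a consequence of the additivity of $\chi$ recorded in (\ref{additionprinciple}): one refines the level sets of $\varphi_1+\varphi_2$ by those of $\varphi_1$ and $\varphi_2$ and sums Euler characteristics over the pieces. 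With that remark your argument is complete.
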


Now, let us turn to Nakajima quiver variety $\mathfrak{L}(\ww)$.
Denote $\mathbf{e}_j=(\delta_{ij})_{j\in \mathbb{I}}$ with $\delta_{ij}$ the Kronecker's delta. 
Let $\mathfrak{P}_i$ be the variety of pairs $(F_1,F_2)$ of representations in $\mathfrak{L}(\ww)$ such that $F_1\subseteq F_2$ with $\ddim F_2/F_1=\mathbf{e}_i$. 
The variety $\mathfrak{P}_i$ is called the \emph{Hecke correspondence} (after \cite{Nakajima94} and \cite{Nakajima98}). 
We have two projections $\rho_{i\bullet}:\mathfrak{P}_i\to \mathfrak{L}(\ww)$ for $\bullet=1,2$. 
$$\xymatrix{&\mathfrak{P}_i\ar[dl]_{\rho_{i1}}\ar[dr]^{\rho_{i2}}\\
\mathfrak{L}(\ww)&&\mathfrak{L}(\ww).}$$

For $F\in \mathfrak{L}(\vv,\ww)$ and $i\in\mathbb{I}$, denote $C_i(F)$ the complex of vector spaces, 
$$0\longrightarrow\, \stackrel{-1}{\rule{0pc}{1.5pc}F(i)}\,\longrightarrow\, \stackrel{0}{\rule{0pc}{1.5pc}W(i)\oplus\bigoplus_{h:i\to j}F(j)}\,\longrightarrow\,
\stackrel{1}{\rule{0pc}{1.5pc}F(i)}\,\longrightarrow 0.$$

\begin{Prop}[Nakajima {\cite{Nakajima94}}]\label{Importantcomplex}
We have
\begin{gather}
H^{-1}(C_i(F))=0,\\
\mathbb{P}(H^0(C_i(F)))\cong \rho_{i1}^{-1}(F),\\ \mathbb{P}(H^1(C_i(F))^*)\cong \rho_{i2}^{-1}(F), 
\end{gather}
where $\mathbb{P}(-)$ is the associated projective space of a vector space. 
\end{Prop}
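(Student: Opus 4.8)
The plan is to work with the description of $\mathfrak{L}(\vv,\ww)$ from Proposition \ref{LissubofKRW} as a quiver Grassmannian inside the fixed representation $M := K_RW|_{\mathcal{Q}}$, so that a point $F \in \mathfrak{L}(\vv,\ww)$ is literally a subrepresentation $F \subseteq M$ with $\ddim F = \vv$. A point of $\rho_{i1}^{-1}(F)$ is then a subrepresentation $F_1 \subseteq F$ with $F/F_1$ concentrated at vertex $i$ of dimension one; equivalently, a codimension-one subspace $F_1(i) \subseteq F(i)$ such that $F_1(i)$ is stable under all maps coming into and out of vertex $i$ \emph{and} such that $F_1$, so defined, is again stable (i.e. still a genuine point of $\mathfrak{L}(\ww)$, not merely a subrepresentation). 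Dually, a point of $\rho_{i2}^{-1}(F)$ is a subrepresentation $F_2 \supseteq F$ inside $M$ with $F_2/F$ concentrated at vertex $i$ of dimension one, i.e. a line in $M(i)/F(i)$ compatible with the arrows, with $F_2$ still stable. First I would set up this dictionary precisely, writing down the incoming map $F(i) \to W(i) \oplus \bigoplus_{h:i\to j} F(j)$ (the direct sum of the framing map $\alpha_i$ at vertex $i$ and the arrow maps $F(h)$) and the outgoing map $\bigoplus_{h:i\to j} F(j) \to F(i)$ assembled from the reversed arrows $F(\bar h)$; the sign-twisted composite of these two, by the commutative relation \eqref{commutativeRelation}, is what makes $C_i(F)$ a complex. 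A direct check shows the composite is zero, which gives $H^{-1} = \ker$ of the first map and lets us speak of $H^0$ and $H^1$.

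The key step for $H^{-1}(C_i(F)) = 0$ is that the first map of $C_i(F)$ is injective. This is exactly where stability enters: if $0 \ne v \in F(i)$ lay in the kernel, then $v$ is killed by the framing map $\alpha_i$ and by every arrow leaving $i$, so the subrepresentation of $F$ generated by $v$ is supported away from the framing, i.e. it is a nonzero $G \subseteq F$ with $G|_{\mathcal{S}} = 0$; by the stability criterion (the definition of stable, or equivalently the injectivity of $\epsilon: F \to K_RW$ in Proposition \ref{LissubofKRW}) this is impossible. So I would phrase this as: $\ker$ of the incoming map at $i$ is a subrepresentation contained in the kernel of $\epsilon$, hence zero.

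For the middle identification, the cohomology $H^0(C_i(F))$ is the cokernel of the outgoing map $\bigoplus_{h:i\to j} F(j) \to F(i)$, restricted appropriately — more precisely, $H^0 = \ker(\text{outgoing} \oplus 0\text{'s})/\im(\text{incoming})$ computed in degree $0$; unwinding, a hyperplane $F_1(i) \subseteq F(i)$ gives a valid point of $\rho_{i1}^{-1}(F)$ precisely when the annihilated line $F(i)/F_1(i)$ pairs trivially against the image of the incoming map, and the resulting $F_1$ is automatically stable because it sits inside the stable $F$ and still surjects onto the framing data at the other vertices (the one-dimensional quotient at $i$ being the only change). Thus lines in $H^0(C_i(F))$, i.e. points of $\mathbb{P}(H^0(C_i(F)))$, biject with such hyperplanes, i.e. with $\rho_{i1}^{-1}(F)$; one should check this is an isomorphism of varieties, not just a bijection, which is routine since everything is linear-algebraic in families. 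The identification $\mathbb{P}(H^1(C_i(F))^*) \cong \rho_{i2}^{-1}(F)$ is the dual statement: $H^1$ is the cokernel of the incoming map, a one-dimensional extension $F_2 \supseteq F$ supported at $i$ corresponds to a line in $M(i)/F(i)$ on which the outgoing relations vanish, equivalently a functional on $H^1(C_i(F))$ up to scalar, and stability of $F_2$ is inherited. I would carry this out by exhibiting explicit mutually inverse morphisms in each case.

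The main obstacle I expect is not any single computation but the bookkeeping around stability: verifying that the subrepresentation $F_1$ (resp. the overrepresentation $F_2$) produced from a cohomology class is again a \emph{stable} framed representation, and conversely that stability of $F$ is what forces $H^{-1} = 0$. The cleanest route is to use the functorial characterization of stability via $\epsilon: F \to K_RW$ from Proposition \ref{LissubofKRW} rather than the generating-subspace definition, so that "stable" becomes the single condition "$\epsilon$ injective," which behaves transparently under passing to sub- and quotient-compatible modifications at a single vertex. Everything else — the complex property, the two cokernel computations, the projectivizations — follows from the commutative relations \eqref{commutativeRelation} and linear algebra.
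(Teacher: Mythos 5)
The paper itself offers no proof of this proposition --- it is quoted directly from Nakajima --- so your argument has to stand on its own. The part that works is $H^{-1}(C_i(F))=0$: a vector $v\in F(i)$ killed by the framing arrow and by every $F(h)$ with $h:i\to j$ spans a subrepresentation of $F$ vanishing on $\mathcal{S}$, contradicting stability (equivalently, injectivity of $\epsilon$ from Proposition \ref{LissubofKRW}). That is the standard argument and you have it essentially right.

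The rest contains a genuine error: you have swapped the two fibres and, consequently, attached each cohomology group to the wrong one. In the paper's convention $\mathfrak{P}_i$ consists of pairs $F_1\subseteq F_2$, so $\rho_{i1}^{-1}(F)$ is the variety of \emph{sup}-representations $F'\supseteq F$ (admissible lines in $K_RW(i)/F(i)$), while $\rho_{i2}^{-1}(F)$ is the variety of subrepresentations (hyperplanes of $F(i)$ containing $\sum_{h:j\to i}h(F(j))$); the paper states this explicitly right after the proposition. The hyperplane count you perform therefore describes $\rho_{i2}^{-1}(F)$, and those hyperplanes are parametrized by $\mathbb{P}$ of the dual of the cokernel of the degree $0\to 1$ differential, i.e.\ by $\mathbb{P}(H^1(C_i(F))^*)$ --- not by $\mathbb{P}(H^0(C_i(F)))$ as you assert. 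Since $\dim H^0-\dim H^1=(\ww-C\vv)_i$ is nonzero in general, the two projectivizations do not even have the same dimension, so this is not a harmless relabelling. Moreover the genuinely nontrivial half of the lemma --- that the admissible lines in $K_RW(i)/F(i)$, i.e.\ the points of $\rho_{i1}^{-1}(F)$, are classified by the middle cohomology $H^0(C_i(F))=\ker/\operatorname{im}$ --- is not established by calling it ``the dual statement'': a line in a quotient of $K_RW(i)$ is not a functional on a quotient of $F(i)$, and the identification requires the explicit form $K_RW(i)=\bigoplus_j\Hom(\mathcal{Q}(i,j),W(j))$ and its universal property (or, in Nakajima's original setting, the moment-map equation together with stability). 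Repairing the proof means redoing the two middle identifications from scratch with the fibres correctly matched.
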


Note that $\rho_{i1}^{-1}(F)$ (resp. $\rho_{i2}^{-1}(F)$) is the variety of sup-representation (resp. subrepresentation) $F'$ of $F$ with dimension difference $\mathbf{e}_i$. 

Let 
\begin{gather}
\phi_i(F)=\dim H^0(C_i(F))=\chi(\rho_{i1}^{-1}(F)),\\
\epsilon_i(F)=\dim H^1(C_i(F))=\chi(\rho_{i2}^{-1}(F)).
\end{gather}
It is clear that $\phi_i(F)-\epsilon_i(F)=(\ww-C\vv)_i$ for $\vv=\ddim F$.

We know $\mathfrak{L}(\ww)=\bigsqcup \mathfrak{L}(\vv,\ww)$, thus $\Fun(\mathfrak{L}(\ww))=\bigoplus_{\vv} \Fun(\mathfrak{L}(\vv,\ww))$. 
For any function $f$ of $\vv$, it defines a map $f:\Fun(\mathfrak{L}(\ww))\to \Fun(\mathfrak{L}(\ww))$ by 
\begin{equation}
f(\varphi_{\vv})= \big(f(\vv) \varphi_{\vv}\big).
\end{equation}

\begin{Th}[Nakajima {\cite{Nakajima94}}]\label{MainThNak94}
There is a well-defined action of $\mathbf{U}$-action over $\Fun(\mathfrak{L}(\ww))$ by 
\begin{gather}
H_i\longmapsto (\ww-C\vv)_i \\
E_i\longmapsto \rho_{i1*}\rho_{i2}^*\\
F_i\longmapsto \rho_{i2*}\rho_{i1}^*
\end{gather}
\end{Th}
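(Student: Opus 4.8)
The plan is to verify the defining relations of $\mathbf{U}$ one by one on $\Fun(\mathfrak{L}(\ww))$, using the dictionary between the correspondences $\mathfrak{P}_i$ and the complexes $C_i(F)$ of Proposition \ref{Importantcomplex}. First I would observe that the Cartan part is immediate: $H_i$ acts diagonally on the summand $\Fun(\mathfrak{L}(\vv,\ww))$ by the scalar $(\ww-C\vv)_i$, so $[H_i,H_j]=0$ is trivial, and $[H_i,E_j]=c_{ij}E_j$, $[H_i,F_j]=-c_{ij}F_j$ follow because $E_j=\rho_{j1*}\rho_{j2}^*$ changes the dimension vector by $+\mathbf{e}_j$ (hence changes $H_i$ by $-(C\mathbf{e}_j)_i=-c_{ij}$, and one is careful with the sign: $E_j$ \emph{adds} a dimension, so $\vv\mapsto\vv+\mathbf{e}_j$ along $\rho_{j1}$, which shifts the weight up by $c_{ij}$), and symmetrically for $F_j$. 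This is a routine bookkeeping check once the direction conventions in the statement are pinned down.

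The heart of the matter is $[E_i,F_i]=H_i$ and $[E_i,F_j]=0$ for $i\neq j$. For this I would form the fibre products $\mathfrak{P}_i\times_{\mathfrak{L}(\ww)}\mathfrak{P}_j$ (over the appropriate leg) in the two orders, obtaining two correspondences from $\mathfrak{L}(\ww)$ to itself, and compute the pushforward–pullbacks using the base-change identity \eqref{Cartesiansquare} together with $f_*(\varphi\cdot f^*\psi)=f_*\varphi\cdot\psi$ and $f_*\mathbf{1}_X(y)=\chi(f^{-1}(y))$. When $i\neq j$, the relevant double fibres are literally isomorphic as varieties (adding a dimension at $i$ and at $j$ commute, since the two Hecke modifications involve disjoint vertices up to the single arrows between them, and one checks the modifications can be done independently), so the two composite correspondences agree and $E_iF_j=F_jE_i$ pointwise on Euler characteristics; this is the cleanest case. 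When $i=j$, the two double fibres over a fixed $F\in\mathfrak{L}(\vv,\ww)$ differ exactly by the locus where the ``down-then-up'' and ``up-then-down'' paths coincide, and the key computation is
\[
(E_iF_i\varphi)(F)-(F_iE_i\varphi)(F)=\big(\chi(\rho_{i1}^{-1}(F))-\chi(\rho_{i2}^{-1}(F))\big)\varphi(F)=\big(\phi_i(F)-\epsilon_i(F)\big)\varphi(F)=(\ww-C\vv)_i\,\varphi(F),
\]
where the first equality comes from stratifying each double fibre over $\mathfrak{L}(\vv,\ww)$ and matching the ``generic'' strata (where the two intermediate representations are distinct) via an explicit isomorphism, leaving precisely the diagonal strata $\mathbb{P}(H^0(C_i(F)))$ and $\mathbb{P}(H^1(C_i(F))^*)$ whose Euler characteristics are $\phi_i(F)$ and $\epsilon_i(F)$; the last equality is the identity $\phi_i-\epsilon_i=(\ww-C\vv)_i$ recorded just before Theorem \ref{MainThNak94}. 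This is the step I expect to be the main obstacle: constructing the isomorphism of the generic strata of the two double fibres canonically enough that the Euler characteristics cancel, and correctly identifying the leftover strata with the projectivizations in Proposition \ref{Importantcomplex}. It amounts to the quiver-variety analogue of the classical $\mathfrak{sl}_2$ computation and was carried out by Nakajima; I would follow that argument, checking that nothing is lost in passing from the symplectic/hyperkähler picture to the quiver-Grassmannian description of Proposition \ref{LissubofKRW}.

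Finally, the Serre-type relations $[E_i,E_j]=0$ when $c_{ij}=0$, and $[E_i,[E_i,E_j]]=0$ when $c_{ij}=-1$ (together with their $F$-counterparts), I would handle by the same fibre-product technique: when $c_{ij}=0$ the two vertices are disconnected, the iterated Hecke modifications commute on the nose, and the correspondences coincide; when $c_{ij}=-1$ one forms the triple fibre product for $E_i^{(2)}E_j$ versus $E_iE_jE_i$ versus $E_jE_i^{(2)}$ and shows the alternating sum of the resulting correspondences vanishes, which reduces — fibre by fibre over $\mathfrak{L}(\ww)$ — to a rank-$2$ ($\mathfrak{sl}_3$) computation on partial-flag-like configurations, again exactly as in Nakajima's paper. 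Since every ingredient (the complex $C_i(F)$, the additivity and multiplicativity of $\chi$, the functoriality of $f_*,f^*$, and the base-change square) is already available in the excerpt, the proof is assembled by invoking Proposition \ref{Importantcomplex} and the listed properties of constructible functions and then quoting the relation-by-relation verification of \cite{Nakajima94}.
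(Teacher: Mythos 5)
The paper does not prove this statement at all --- it is quoted directly from \cite{Nakajima94} --- and your relation-by-relation verification (Cartan part by weight bookkeeping, $[E_i,F_j]=\delta_{ij}H_i$ by stratifying the two double fibre products and cancelling the off-diagonal strata against Proposition \ref{Importantcomplex}, Serre relations by triple fibre products \`a la Lusztig) is precisely Nakajima's argument, and also exactly the template the paper itself follows when proving the $\iota$-analogue, Theorem \ref{MainTh}, in the two ``Relations'' sections. One small correction to your Cartan bookkeeping: since $(E_j\varphi)(F_1)$ integrates $\varphi$ over the representations $F_2\supseteq F_1$ with $\ddim F_2/F_1=\mathbf{e}_j$, the operator $E_j$ sends $\Fun(\mathfrak{L}(\vv,\ww))$ to $\Fun(\mathfrak{L}(\vv-\mathbf{e}_j,\ww))$ --- it \emph{lowers} $\vv$, not raises it --- and this is what makes the weight $(\ww-C\vv)_i$ increase by $c_{ij}$; your parenthetical asserts the opposite direction but still lands on the correct commutator.
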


\begin{Eg}Especially, the Example \ref{ExampleofNakQV} recovers the Beilinson--Lusztig--MacPherson construction of ${\mathbf{U}}_q(\mathfrak{sl}_n)$ in \cite{BLM1990} at $q=1$.
\end{Eg}

\section{$\sigma$-quiver varieties}

Denote $\sigma$ the involution over $Q$ induced by the longest element $w_0$ in the Weyl group, that is, $\sigma(i)=2d-i$. 
$$\xymatrix@!=1pc{
\stackrel{1}\circ\ar@{-}[r]
\ar@{<..>}@/_/[d]&
\stackrel{2}\circ\ar@{-}[r]
\ar@{<..>}@/_/[d]&
\cdots\ar@{-}[r]&
\stackrel{\!\!\!\!d-1\!\!\!\!}\circ
\ar@{}[r];[dr]|{\phantom{00} {\displaystyle\circ} d\phantom{00}}="zero"
\ar@{-}"zero"\ar@{<..>}@/_/[d]&\ar@{..>}@(ur,dr)"zero";"zero"
\\
\underset{\!\!\!\!2d-1\!\!\!\!}\circ\ar@{-}[r]&
\underset{\!\!\!\!2d-2\!\!\!\!}\circ\ar@{-}[r]&
\cdots\ar@{-}[r]&
\underset{\!\!\!\!d+1\!\!\!\!}\circ\ar@{-}"zero" & 
}$$
This is known as \emph{type AIII}. 

\begin{Lemma}\label{SymmetricPairing}There is a natural non-degenerate pairing 
\begin{equation}
B_{ij}:\mathcal{Q}(i,j)\otimes \mathcal{Q}(\sigma(i),j)\to \mathbb{C},
\end{equation}
such that 
\begin{gather}\label{adjointofpairingB}
B_{ij}(p,q)=B_{\sigma(i),j}(q,p),\\
B_{i',j}(p\circ h, q)=B_{\sigma(i'),j}(p,q\circ \sigma(\bar{h}))
\end{gather}
for any $p\in \mathcal{Q}(i,j)$, $q\in \mathcal{Q}(\sigma(i),j)$ and $h\in \mathcal{Q}(i,i')$. 
\end{Lemma}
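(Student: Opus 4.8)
The plan is to build the pairing $B_{ij}$ by hand on a convenient basis of the morphism spaces $\mathcal{Q}(i,j)$ and then check that the two displayed compatibilities force essentially the only possible normalization. First I would describe $\mathcal{Q}(i,j)$ explicitly: since $\mathcal{Q}$ is the preprojective-type category attached to the type $A_{2d-1}$ quiver with the commutative relations \eqref{commutativeRelation}, for vertices $i\le j$ the space $\mathcal{Q}(i,j)$ is spanned by the compositions of ``upward'' arrows $i\to i+1\to\cdots\to j$ together with possible ``round trips'' $\bar h h$ that, via \eqref{commutativeRelation}, can be rewritten so that a canonical basis is indexed by the admissible paths from $i$ to $j$ modulo the relations; one records for each such path its length modulo going back and forth. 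The key structural fact I would isolate is that $\sigma$ induces an (contravariant on arrows, in the sense $h\mapsto\sigma(\bar h)$) automorphism of the quiver $Q$ compatible with the sign convention in \eqref{commutativeRelation}, so that precomposition with $\sigma(\bar h)$ is the ``adjoint'' operation to postcomposition with $h$ under the sought pairing.

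Next I would define $B_{ij}(p,q)$ for basis paths $p\in\mathcal{Q}(i,j)$ and $q\in\mathcal{Q}(\sigma(i),j)$ by a matching rule: a path from $i$ to $j$ and a path from $\sigma(i)=2d-i$ to $j$ together trace out (after applying $\sigma$ to the second to land it on the ``$i$-side'') a path from $i$ to $2d-i$ through $j$ of controlled total length; set $B_{ij}(p,q)$ to be the coefficient (a sign, or $0$) with which this composite equals the canonical path $i\to\sigma(i)$ in $\mathcal{Q}(i,\sigma(i))$, using the commutative relations to reduce. Equivalently, and more cleanly, one can set $B_{ij}(p,q)=\varepsilon(j)\cdot\mathbf{1}[\,\sigma(\bar q)\circ p \text{ reduces to the canonical element of } \mathcal{Q}(i,\sigma(i))\,]$ for a suitable sign function $\varepsilon(j)$ chosen so that symmetry \eqref{adjointofpairingB} holds. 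Non-degeneracy is then immediate because the matching is a perfect pairing of the two path bases (each basis path $p$ pairs nontrivially with exactly one basis path $q$), and the sign function is a unit.

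The two functional equations are the substance. For the second one, $B_{i',j}(p\circ h,q)=B_{\sigma(i'),j}(p,q\circ\sigma(\bar h))$: by bilinearity it suffices to check it on basis paths, where both sides are computed by the same ``splice and reduce'' procedure — postcomposing $p$ with $h$ on the left input is, after applying $\sigma(\bar{\,\cdot\,})$, the same as postcomposing $q$ with $\sigma(\bar h)$ on the right input, because $\sigma(\overline{p\circ h})=\sigma(\bar h)\circ\sigma(\bar p)$ and associativity of composition in $\mathcal{Q}$. So this reduces to a bookkeeping identity about how the reduction via \eqref{commutativeRelation} interacts with pre/post-composition, which one does once and for all. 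The first one, $B_{ij}(p,q)=B_{\sigma(i),j}(q,p)$, is built into the definition if the sign function $\varepsilon(j)$ is chosen symmetrically — this is exactly what pins it down.

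The main obstacle I anticipate is not existence but the sign and well-definedness: one must verify that the matching rule descends to the quotient by \eqref{commutativeRelation} (i.e., the commutative relation lies in the radical of $B_{ij}$ on each side) and that the sign conventions $\pm$ appearing in \eqref{commutativeRelation}, which are not $\sigma$-symmetric on the nose, conspire with the choice of $\varepsilon(j)$ to make both \eqref{adjointofpairingB} hold simultaneously. Concretely I would fix the sign of the canonical generator of $\mathcal{Q}(i,\sigma(i))$, propagate signs outward by demanding the second identity, and then check the first identity is automatically satisfied — or, if a global sign obstruction appears, absorb it into the choice of $B_{i,\sigma(i)}$ itself. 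I expect this to be a short but slightly delicate induction on $j-i$.
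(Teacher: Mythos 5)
Your construction --- pairing $p$ and $q$ by splicing them (via $\sigma$ and the bar involution) into a path from a vertex to its $\sigma$-image and reading off the coefficient of the unique longest path modulo the commutative relations --- is exactly the paper's definition ($B_{ij}(p,q)$ is the coefficient of $[d_j]$ in $\sigma(q)\bar p$), and your verification of the adjunction via associativity of composition matches the paper's (very terse) argument. The well-definedness and sign issues you flag are legitimate but are absorbed by working directly in the quotient category $\mathcal{Q}$, which is what the paper does.
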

\begin{proof}
By a simple combinatorial argument, there is a unique path $[d_i]$ of length $|\mathbb{I}|-1=2d-2$ from $i$ to $\sigma(i)$ up to commutative relations (\ref{commutativeRelation}). 
Let us denote 
$B_{ij}(p,q)$ to be the coefficient of $[d_j]$ in $\sigma(q)\overline{p}$. It is clear that this pairing satisfies the properties in the assertion. 
\end{proof}

\begin{Rmk}This pairing is in principle the Auslander--Reiten formulae
$$\operatorname{Hom}_{\vec{Q}}(U,V)^*\cong \operatorname{Ext}_{\vec{Q}}(V,\tau U),$$
for an orientation $\vec{Q}$ of $Q$, where $\tau$ is the Auslander--Reiten translation, see for example \cite{kirillov2016quiver}. 
But for other Dynkin types, the pairing is not explicit as that for type A, and the signs would not be complicated. 
\end{Rmk}

Let $W=(W_i)_{i\in \mathbb{I}}$ be an $\mathbb{I}$-graded symplectic vector space. 
\begin{Coro}\label{symplecticformoverKRW}
There is a symplectic form over $\bigoplus_{i\in \mathbb{I}}K_RW(i)$ induced by 
\begin{equation}
\omega: K_RW(i)\otimes K_RW(\sigma(i))\longrightarrow \mathbb{C}
\end{equation}
such that 
\begin{gather}
\omega(x,y)=-\omega(y,x),\\ \omega(h(x),y)=\omega(x,\sigma(\bar{h})y)
\end{gather}
for any arrows $h$ in $K_RW$. 
\end{Coro}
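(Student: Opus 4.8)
The plan is to write $\omega$ down by an explicit formula on the graded pieces and then deduce its two properties from the two relations for $B$ in Lemma~\ref{SymmetricPairing}. Recall that an element $x\in K_RW(i)=\bigoplus_{j\in\mathbb{I}}\Hom_{\mathbb{C}}(\mathcal{Q}(i,j),W_j)$ is a tuple $(x_j)_j$ with $x_j\colon\mathcal{Q}(i,j)\to W_j$. Write $\omega_j$ for the non-degenerate alternating form on $W_j$ provided by the symplectic structure of $W$, and for each $i$ let $\delta^{(j)}_i=\sum_s p^{(j)}_s\otimes q^{(j)}_s\in\mathcal{Q}(i,j)\otimes\mathcal{Q}(\sigma(i),j)$ be the copairing dual to the non-degenerate pairing $B_{ij}$ of Lemma~\ref{SymmetricPairing}. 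For $x\in K_RW(i)$ and $y\in K_RW(\sigma(i))$ I would set
\[
\omega(x,y)=\sum_{j\in\mathbb{I}}\sum_s\omega_j\bigl(x_j(p^{(j)}_s),\,y_j(q^{(j)}_s)\bigr).
\]
Since $\delta^{(j)}_i$ is canonical, this does not depend on the basis used to compute it, and since each $B_{ij}$ and each $\omega_j$ is non-degenerate it is a perfect pairing between $K_RW(i)$ and $K_RW(\sigma(i))$ (which have equal dimension, $B_{ij}$ forcing $\dim\mathcal{Q}(i,j)=\dim\mathcal{Q}(\sigma(i),j)$). Then $\Omega\bigl(\sum_i x^{(i)},\sum_i y^{(i)}\bigr):=\sum_i\omega(x^{(i)},y^{(\sigma(i))})$ for $x^{(i)},y^{(i)}\in K_RW(i)$ is a non-degenerate bilinear form on $\bigoplus_i K_RW(i)$.

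For the antisymmetry $\omega(x,y)=-\omega(y,x)$: the first relation $B_{ij}(p,q)=B_{\sigma(i),j}(q,p)$ of Lemma~\ref{SymmetricPairing} says precisely that $\delta^{(j)}_{\sigma(i)}$ is $\delta^{(j)}_i$ with its two tensor factors interchanged, so evaluating the formula for $\omega(y,x)$ gives $\sum_{j,s}\omega_j(y_j(q^{(j)}_s),x_j(p^{(j)}_s))=-\sum_{j,s}\omega_j(x_j(p^{(j)}_s),y_j(q^{(j)}_s))=-\omega(x,y)$, the sign coming from $\omega_j$. Reindexing the defining sum of $\Omega$ by $i\mapsto\sigma(i)$ upgrades this to $\Omega(x,y)=-\Omega(y,x)$, so $\Omega$ is symplectic.

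For the equivariance $\omega(h(x),y)=\omega(x,\sigma(\bar h)y)$, take an arrow $h\colon i\to i'$ of $\mathcal{Q}$, so $x\in K_RW(i)$ and $y\in K_RW(\sigma(i'))$. From the action of $K_RW$ on morphisms, $h(x)_j(p)=x_j(p\circ h)$ for $p\in\mathcal{Q}(i',j)$ and $(\sigma(\bar h)y)_j(q)=y_j(q\circ\sigma(\bar h))$ for $q\in\mathcal{Q}(\sigma(i),j)$, so substituting into the defining formula reduces the identity to
\[
\bigl((-\circ h)\otimes\mathrm{id}\bigr)\,\delta^{(j)}_{i'}=\bigl(\mathrm{id}\otimes(-\circ\sigma(\bar h))\bigr)\,\delta^{(j)}_{i}
\]
inside $\mathcal{Q}(i,j)\otimes\mathcal{Q}(\sigma(i'),j)$. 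Contracting both sides in the second slot against an arbitrary functional $B_{i'j}(b,-)$: the left side is $b\circ h$ by the defining property of the copairing $\delta^{(j)}_{i'}$, while the right side is $\sum_s B_{i'j}\bigl(b,q^{(j)}_s\circ\sigma(\bar h)\bigr)\,p^{(j)}_s$, which the second relation of Lemma~\ref{SymmetricPairing} rewrites as $\sum_s B_{ij}(b\circ h,q^{(j)}_s)\,p^{(j)}_s=b\circ h$. (The framing arrows $\alpha\colon i\to\underline i$ may be included by the same computation using compatibility of the symplectic form of $W$, but they are not needed for the geometry of $\mathfrak{R}(\ww)$.)

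The only point I expect to need care is the index bookkeeping in this last step — keeping track of which of $i,i',\sigma(i),\sigma(i')$ sits in which slot — together with pinning down the precise form, and any signs, in which the second relation of Lemma~\ref{SymmetricPairing} converts precomposition by $h$ in one argument of $B$ into precomposition by $\sigma(\bar h)$ in the other. Once that bookkeeping is fixed, both asserted properties of $\omega$ are formal consequences of non-degeneracy and the two relations of Lemma~\ref{SymmetricPairing}.
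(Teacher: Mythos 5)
Your formula is the same as the paper's: the paper identifies $\Hom(\mathcal{Q}(i,j),W(j))$ with $\mathcal{Q}(i,j)^*\otimes W(j)$ and sets $\omega(p\otimes x,q\otimes y)=B_{ij}^{\mathrm{t}}(p,q)\,\omega(x,y)$, which is exactly your copairing expression $\sum_{j,s}\omega_j(x_j(p^{(j)}_s),y_j(q^{(j)}_s))$ written in dual-basis-free form. So the proposal is correct and takes essentially the same approach; you merely carry out the verification of antisymmetry and $h$-adjointness from the two relations of Lemma~\ref{SymmetricPairing}, which the paper leaves implicit.
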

\begin{proof}
By identifying $\Hom(\mathcal{Q}(i,j),W(j))=
\mathcal{Q}(i,j)^*\otimes W(j)$, the form is given by
$$
\omega(p\otimes x,q\otimes y)=B_{ij}^{\text{t}}(p,q)\omega(x,y)\quad 
\begin{array}{c}
p\in \mathcal{Q}(i,j)^*,x\in W(j)\\
q\in \mathcal{Q}(\sigma(i),j)^*,y\in W(j)
\end{array}$$
where $B_{ij}^{\text{t}}$ is the nondegenerated pairing $\mathcal{Q}(i,j)^*\otimes \mathcal{Q}(\sigma(i),j)^*\to \mathbb{C}$ induced by $B_{ij}$. 
\end{proof}

Now we will define an involution over $\mathfrak{L}(\ww)$. 
To ensure the well-definedness, we need the following lemma. 

\begin{Lemma} For any subrepresentation $F$ of $K_RW|_{\mathcal{Q}}$, 
the annihilator $F^\perp$ is also a subreprsentation of $K_RW|_{\mathcal{Q}}$. 
\end{Lemma}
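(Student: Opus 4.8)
The plan is to verify directly that $F^\perp$ is closed under all arrows of $Q^{\textsf{fr}}$, using the adjointness property \eqref{adjointofpairingB} of the pairing $B$, equivalently the compatibility $\omega(h(x),y)=\omega(x,\sigma(\bar h)y)$ from Corollary \ref{symplecticformoverKRW}. Recall that $F^\perp$ should be read componentwise: $F^\perp(i) = \{\, y\in K_RW(\sigma(i)) : \omega(x,y)=0 \text{ for all } x\in F(\sigma(i))\,\}$, so that $F^\perp$ inherits the $\mathbb I$-grading (with the index twisted by $\sigma$, matching the fact that $\omega$ pairs the $i$-component with the $\sigma(i)$-component). The content of the lemma is then that this graded subspace is stable under each $K_RW(h)$.

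First I would fix an arrow $h:i\to i'$ in $\mathcal Q$ and take $y\in F^\perp(i)\subseteq K_RW(\sigma(i))$; I want to show $K_RW(\sigma(h))\,y = \sigma(\bar{h})^{\vee}$-type image lands in $F^\perp(i')$. Concretely, for any $x'\in F(\sigma(i'))$ I need $\omega\big(x',\, \sigma(\bar h)\,y\big)=0$. By the compatibility formula in Corollary \ref{symplecticformoverKRW} (applied with the arrow $\bar h: i'\to i$, whose $\sigma$-image acts on the $\sigma$-side), this equals $\pm\,\omega\big(h(x'),\,y\big)$ — more precisely one rewrites $\omega(x',\sigma(\bar h)y)=\omega(\bar h(x'), y)$ or $\omega(x', \sigma(\bar h)y) = \omega(h(x'),y)$ after matching $h$ versus $\bar h$ correctly. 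Since $F$ is a subrepresentation, $\bar h(x')\in F(\sigma(i))$ (the arrow $\bar h$ goes $i'\to i$, so its action sends the $\sigma(i')$-component into the $\sigma(i)$-component — here one must be careful that $\sigma$ maps arrows to arrows and $\sigma(\overline{h})$ is the arrow realizing this), and hence $\omega(\bar h(x'),y)=0$ because $y\in F^\perp(i)$ annihilates $F(\sigma(i))$. The same argument handles the framing arrows $\alpha:i\to\underline i$, except there one must also check that the symplectic structure on $\bigoplus K_RW(i)$ interacts correctly with $W$; since $W$ is itself symplectic and the maps $K_RW(\alpha)$ are built from the identity, the annihilator condition transfers verbatim. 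Finally, a short dimension/count check via Proposition \ref{dimKRW} or just $\dim F + \dim F^\perp = \dim \bigoplus K_RW(i)$ confirms $F^\perp$ has the complementary dimension vector, so $F^\perp$ is again a point of some $\mathfrak L(\vv^\perp,\ww)$ once we know it is stable (stability is automatic by Proposition \ref{LissubofKRW}, as $F^\perp\subseteq K_RW|_{\mathcal Q}$).

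The main obstacle I anticipate is purely bookkeeping with the involution $\sigma$ and the bar-operation on arrows: one has to be scrupulous that $\sigma$ sends the arrow $h:i\to i'$ to an arrow $\sigma(h):\sigma(i)\to\sigma(i')$, that $\overline{\sigma(h)}=\sigma(\overline h)$, and that the pairing $B_{ij}$ genuinely pairs $\mathcal Q(i,j)$ with $\mathcal Q(\sigma(i),j)$ so that the twist in the grading of $F^\perp$ is the one that makes everything consistent. Once the indices are pinned down, the stability of $F^\perp$ under each arrow is a one-line consequence of the adjunction relation \eqref{adjointofpairingB}. I would therefore organize the write-up around a single displayed computation of $\omega\big(x', K_RW(\sigma(h))y\big)$ for each type of arrow, citing Corollary \ref{symplecticformoverKRW} for the move that transfers the arrow action across $\omega$, and then invoke the definition of subrepresentation for $F$ together with $y\in F^\perp$ to conclude the expression vanishes.
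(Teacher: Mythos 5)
Your argument is essentially identical to the paper's proof, which consists of the single observation that $h(F(i))\subseteq F(j)$ if and only if $\sigma(\bar h)(F(j)^\perp)\subseteq F(i)^\perp$ --- exactly the adjunction $\omega(h(x),y)=\omega(x,\sigma(\bar h)y)$ you compute. The only blemish is a small index slip in your componentwise definition (the annihilator of $F(\sigma(i))$ lives in $K_RW(i)$, not $K_RW(\sigma(i))$), and the checks of framing arrows, dimension, and stability are unnecessary for this statement, which concerns only $K_RW|_{\mathcal Q}$.
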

\begin{proof}
Note that $h(F(i))\subseteq F(j)$ if and only if $\sigma(\bar{h})(F(j)^\perp)\subseteq F(i)^\perp$. 
\end{proof}

Thanks to the lemma above, it is safe to define 
\begin{equation}\label{involutionsigma}
\sigma: \mathfrak{L}(\ww)\longrightarrow \mathfrak{L}(\ww)
\end{equation}
the involution sending $F$ to $F^\perp$. 
Define the \emph{$\sigma$-quiver variety} $\mathfrak{R}(\ww)$ to be the fixed loci of fixed points of $\mathfrak{L}(\ww)$ under this involution. 

\begin{Rmk}\label{Rmkonsigmavariety}
Actually, the Nakajima quiver variety $\mathfrak{L}(\ww)$ is a closed subvariety of the Nakajima quiver variety $\mathfrak{M}(\ww)$. 
To be exact, it is the fibre of $0\in \mathfrak{M}_0(\ww)$ under a proper morphism $\mathfrak{M}(\ww)\to \mathfrak{M}_0(\ww)$. 
Our definition of $\mathfrak{R}(\ww)$ is inspired by \cite{Li18}. 
Actually, in our case (type AIII), the variety $\mathfrak{R}(\ww)$ is the intersection of $\mathfrak{L}(\ww)$ and the $\sigma$-quiver variety $\mathfrak{R}(\ww)$ in $\mathfrak{M}(\ww)$. 
It follows from the next theorem \ref{sigmacoincides}. 
In particular, $\mathfrak{R}(\ww)$ gives a family of \emph{Spaltenstein varieties}, see \cite{Li18}. 

We will not use this fact in the rest of this paper. 
\end{Rmk}

\begin{Th}\label{sigmacoincides}In type AIII, our involution $\sigma$ defined in (\ref{involutionsigma}) coincides with the involution defined in \cite{Li18} after restricting to $\mathfrak{L}(\ww)$. 
\end{Th}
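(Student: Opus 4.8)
The plan is to unwind both definitions of the involution on $\mathfrak{L}(\ww)$ and check they agree pointwise. The involution constructed here sends a subrepresentation $F \subseteq K_RW|_{\mathcal{Q}}$ to its annihilator $F^\perp$ with respect to the symplectic form $\omega$ of Corollary \ref{symplecticformoverKRW}. The involution of \cite{Li18}, on the other hand, is built from the datum of a symplectic form on the framing space $W$ together with the $\sigma$-twisted structure on the quiver; after the identification of $\mathfrak{L}(\vv,\ww)$ with quiver Grassmannians of $K_RW|_{\mathcal{Q}}$ (Proposition \ref{LissubofKRW}), it too must descend to an operation on subrepresentations of $K_RW|_{\mathcal{Q}}$. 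So the first step is to make the dictionary precise: recall from \cite{Li18} the explicit formula for its involution in the quiver-data presentation (maps $B_h$ between the $V_i$'s, framing maps $i_j, j_j$), and translate it through the isomorphism of Proposition \ref{LissubofKRW} into the language of subobjects of $K_RW|_{\mathcal{Q}}$.

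The second step is the computational heart: show that the translated \cite{Li18}-involution sends $F$ to $F^\perp$ for the \emph{same} bilinear form $\omega$. The key input is the compatibility $\omega(h(x),y) = \omega(x, \sigma(\bar h) y)$ from Corollary \ref{symplecticformoverKRW}, which is exactly the adjunction property \eqref{adjointofpairingB} of the pairing $B_{ij}$ transported to $K_RW$. This is what guarantees that $F^\perp$ is again a subrepresentation (the lemma immediately preceding \eqref{involutionsigma}) and, more importantly, that the "perp" operation is precisely the linear-algebra shadow of the $\sigma$-twist used in \cite{Li18}. Concretely, I expect that in \cite{Li18} the involution is of the form $F \mapsto$ "orthogonal complement of $F$ with respect to a symplectic form on the total space of $K_RW$", and that the two symplectic forms — ours, defined via $B_{ij}^{\mathrm{t}}$ and the form on $W$, and theirs — coincide up to a scalar (or an automorphism of $W$) because both are characterized by the same adjunction identity together with nondegeneracy, and Lemma \ref{SymmetricPairing} says such a pairing is unique.

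The third step is to handle the stability/framing bookkeeping: one must verify that the two constructions use compatible identifications of the framing data, i.e. that the symplectic form on $W$ chosen in \cite{Li18} matches the one used in Corollary \ref{symplecticformoverKRW}, and that the GIT-stable locus in \cite{Li18}'s presentation corresponds under Proposition \ref{LissubofKRW} to honest subrepresentations (injectivity of $\epsilon$). This is routine given the cited results but needs to be spelled out so that "restricting to $\mathfrak{L}(\ww)$" in the statement is unambiguous.

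I expect the main obstacle to be the translation in step one: \cite{Li18} works in Nakajima's original symplectic/moment-map formalism with explicit maps and signs, whereas the present paper works in the Keller--Scherotzke categorical formalism with the right Kan extension $K_RW$, so matching conventions — in particular the signs in the commutative relation \eqref{commutativeRelation} and the sign $\pm$ appearing in the pairing $B_{ij}$ — is where the real work lies. Once the dictionary is fixed, the equality $\sigma(F) = F^\perp$ on both sides should follow from the uniqueness clause in Lemma \ref{SymmetricPairing} together with the adjunction identities, with no further geometric input needed.
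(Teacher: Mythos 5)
There is a genuine gap in your plan, and it sits exactly where you locate the ``computational heart.'' You assume that the involution of \cite{Li18} is, or can be straightforwardly unwound into, an explicit linear-algebra operation on the quiver data --- you even write that you ``expect'' it to be an orthogonal complement with respect to some symplectic form on the total space of $K_RW$. It is not defined that way. The involution in \cite{Li18} is built from the \emph{reflection functor} associated to the longest element $w_0$ of the Weyl group (composed with a dualization/transpose and an identification $W^*\cong W$). A reflection functor for $w_0$ is a composite of $\ell(w_0)$ single-vertex reflection functors, each given by a kernel/cokernel construction on the moment-map data; there is no ready-made ``explicit formula in the quiver-data presentation'' to translate through Proposition \ref{LissubofKRW}. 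Consequently your step one cannot be carried out by bookkeeping, and your step two --- matching two explicit bilinear forms via the uniqueness clause of Lemma \ref{SymmetricPairing} --- never gets off the ground, because the object on the \cite{Li18} side is not presented as a perp operation in the first place.

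The missing idea is precisely the nontrivial input the paper uses: Lusztig's ``new symmetry'' $\mathfrak{L}(W)\to\mathfrak{L}(W^*)$, which \emph{is} the annihilator operation on subrepresentations of $K_RW$, was proved by Nakajima (\cite{Nakajima00Reflection}) to coincide with the composite of the $w_0$ reflection functor and dualization. That theorem is what converts the reflection-functor definition of \cite{Li18} into the explicit annihilator description; once it is invoked, the remaining content is only the functoriality of $K_R$ under the chosen isomorphism $W^*\to W$. Without citing (or reproving) that identification, your argument reduces the theorem to a tautology by silently assuming its conclusion --- namely, that the \cite{Li18} involution already acts by taking perps.
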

\begin{proof}
We will write $\mathfrak{L}(W)$ rather than $\mathfrak{L}(\ww)$ for an $\mathbb{I}$-graded vector space $W$ to emphasize the underlying frame space. 

Firstly, the involution defined in \cite{Li18} is based on reflection functors introduced in \cite{Lusztig00Quiver} and \cite{Andrea00Aremark} independent, see also \cite{Nakajima00Reflection}. 
We will only use the reflection functor corresponding to the longest element of Weyl group. It is usually complicated to describe, but over $\mathfrak{L}(\ww)$, it has some explicite description. 

Recall the Lusztig's new symmetry introduced in \cite{Lusztig00Remarks}, 
it is an isomorphism of varieties given by 
\begin{equation}\label{Lusztignewsymmetry}
\mathfrak{L}(W)\longrightarrow \mathfrak{L}(W^*). 
\end{equation}
It is given by taking the annihilator of $K_RW$. 
In \cite{Nakajima00Reflection}, Nakajima proved this coincides with the composition of the reflection functor corresponding to the longest element of the Weyl group and the isomorphism of taking the dual representation. 
Actually, the involution defined in \cite{Li18} is the composition of the longest element of the Weyl group and the isomorphism of taking the annihilator. In particular, it is essentially (\ref{Lusztignewsymmetry}) by identifying $W^*$ and $W$ by a bilinear form. 

Assume we have an isomorphism $W^*\to W$ by a bilinear form, then it induces an isomorphism, 
\begin{equation}
K_R W^*\longrightarrow K_R W
\end{equation}
since the construction of $K_R$ is functorial. 
By the construction of Lusztig's new symmetry, after the above identification, the involution $\sigma$ introduced in \cite{Li18} coincides with ours. 
\end{proof}

\begin{Eg}\label{ExampleofsigmaNakQV}
In the Example \ref{ExampleofNakQV}, assume $n$ is an even number, and $W$ is equipped with a symplectic form $\omega$. 
The pairing over $K_R(\ww)$ is given by
$$\omega((v_i),(v'_i))=\sum_i \omega(v_i,v'_{\sigma(i)}).$$
Thus in particular, for a partial flag 
$$0=V_0\subseteq \cdots \subseteq V_i \subseteq \cdots \subseteq V_n=W$$ 
of $W$, its image under $\sigma$ is exactly 
$$0=V_n^\perp\subseteq \cdots \subseteq V_{n-i}^\perp\subseteq \cdots \subseteq V_0^\perp=W.$$
In particular, the $\sigma$-quiver variety is exactly the partial flags of type $C$ studied in \cite{BaoKujwaLiWang}. 
\end{Eg}

\begin{Eg}\label{iSmallEgA}Let us analyse the Example \ref{SmallEgA} when $n=2$.
Let us equip $W$ with a symplectic form $\omega$. 
In this case, 
$$\omega\bigg(v_1\,{\displaystyle v_2^+\atop v_2^-}\,v_3\,\,,\,\,v'_1\,{\displaystyle v^{\prime+}_2\atop v_2^{\prime-}}\,v'_3\bigg)=\omega(v_1,v'_3)+\omega(v_3,v'_1)+\omega(v_2^+,v_2^{\prime -})+\omega(v_2^-,v_2^{\prime +}).$$
As a result, we are finding subspaces $V_1,V_3$ in $W$ with $V_1^\perp =V_3$ and $V_2$ a Lagrangian subspace of $W\oplus W$ under above symplectic form such that 
$$\operatorname{pr}_2(V_2)\subseteq V_1\cap V_3,\qquad 
(V_1+V_3)\oplus 0\subseteq V_2,$$
where $\operatorname{pr}_2:W\oplus W\to W$ is the second projection. 
\begin{itemize}
\item When $\vv=022$ or $220$, $\mathfrak{R}(\vv,\ww)$ is a point, given by
$$\begin{array}{c}\xymatrix{
&W\\
\ar@{}[rr]^{\displaystyle W}="uW"_{\displaystyle 0}="dW"
0\ar"uW"\ar"dW";[]&\ar"uW";[u]
&W\ar"uW"\ar"dW";[],\\
}\end{array}\quad \text{or}\quad \begin{array}{c}\xymatrix{
&W\\
\ar@{}[rr]^{\displaystyle W}="uW"_{\displaystyle 0}="dW"
W\ar"uW"\ar"dW";[]&\ar"uW";[u]
&0\ar"uW"\ar"dW";[].\\
}\end{array}$$
Actually, the choice of $V_2$ can only be $\ker \operatorname{pr}_2$.
\item When $\vv=121$, it forces $V_1=V_3$ since $V_1^\perp=V_1$. 
As a result, $\mathfrak{R}(\vv,\ww)$ is the variety of one dimensional vector spaces $V_1$ and $\bar{V}_2$ with $V_1\subseteq W$ and $\bar{V}_2\subseteq (W\oplus V_1)/(V_1\oplus 0)$. Thus it is the associated projective bundle of $\mathcal{T}\oplus \mathcal{Q}$ over $\mathbb{P}^1$, with $\mathcal{T}$ and $\mathcal{Q}$ to be the tautological and quotient bundle respectively. 
\end{itemize}
\end{Eg}

\begin{Eg}\label{iSmallEgB}Let us deal with Example \ref{SmallEgB} when $n=m=2$. We take $N=M=W$ and equip with the same symplectic form $\omega$.
Now, the symplectic form is given by 
$$\omega\bigg(
\begin{array}{c}
\quad v_1^+\quad v_2^+\quad v_3^+\\
v_1^-\quad v_2^-\quad v_3^-\quad \\
\end{array}\,\,,\,\,
\begin{array}{c}
\quad v_1^{\prime+}\quad v_2^{\prime+}\quad v_3^{\prime+}\\
v_1^{\prime-}\quad v_2^{\prime-}\quad v_3^{\prime-}\quad \\
\end{array}
\bigg)=\sum_{\begin{subarray}{c}
\pm\in \{+,-\}\\
\bullet=1,2,3
\end{subarray}} \omega(v_\bullet^{\pm},v_{4-\bullet}^{\pm}).$$
We take the following notations
$$\xymatrix{
\fbox{$W$}&&\fbox{$W$}\\
V_1\ar[u]\ar@<+0.5ex>[r]&\ar@<+0.5ex>[l]
V_2\ar@<+0.5ex>[r]&\ar@<+0.5ex>[l]
V_3\ar[u].}$$
We have 
\begin{itemize}
\item For $\vv=024$ and $\vv=402$, the sigma quiver variety is empty. 
This is because $\operatorname{pr}_\bullet(V_2)$ is always nonzero for $\bullet=1,2$, since $\ker\operatorname{pr}_\bullet$ is not Lagrangian. 
\item 
For $\vv=123$ or $\vv=321$, the representation in $\mathfrak{R}(\vv,\ww)$ takes the form 
$$\begin{array}{c}\xymatrix{
\ar@{}[r]|>>{\displaystyle W}="tN"
&&&\ar@{}[r]|<<{\displaystyle W}="tM"
&\\
\ar@{}[r]_>>{\displaystyle 1}="aN"
&\ar@{}[r]^<<{\displaystyle 0}="aM"
\ar@{}[r]_>>{\displaystyle 1}="bN"
&\ar@{}[r]^<<{\displaystyle 1}="bM"
\ar@{}[r]_>>{\displaystyle 1}="cN"
&\ar@{}[r]^<<{\displaystyle 2,}="cM"&
\ar"aN";"tN"\ar"bN";"aN"\ar"cN";"bN"
\ar"cM";"tM"\ar"aM";"bM"\ar"bM";"cM"
}\end{array}\quad \text{or}\quad
\begin{array}{c}\xymatrix{
\ar@{}[r]|>>{\displaystyle W}="tN"
&&&\ar@{}[r]|<<{\displaystyle W}="tM"
&\\
\ar@{}[r]_>>{\displaystyle 2}="aN"
&\ar@{}[r]^<<{\displaystyle 1}="aM"
\ar@{}[r]_>>{\displaystyle 1}="bN"
&\ar@{}[r]^<<{\displaystyle 1}="bM"
\ar@{}[r]_>>{\displaystyle 0}="cN"
&\ar@{}[r]^<<{\displaystyle 1.}="cM"&
\ar"aN";"tN"\ar"bN";"aN"\ar"cN";"bN"
\ar"cM";"tM"\ar"aM";"bM"\ar"bM";"cM"
}\end{array}$$
Here the numbers indicate the dimension. 
As a result, $\mathfrak{R}(\vv,\ww)$ are both isomorphic to $\mathbb{P}^1\times \mathbb{P}^1$. 
\item When $\vv=222$, it is slightly complicated. 
\begin{itemize}
\item If $\operatorname{pr}_\bullet(V_2)$ has dimension $2$ for $\bullet =1 $ or $\bullet=2$, then $V_1=0\oplus W$ and $V_3=W\oplus 0$. 
In this case the choice of $V_2$ is arbitrary. 
\item If $\operatorname{pr}_\bullet(V_2)$ are both of dimension $1$ for $\bullet=1,2$, then $V_2=V_2^+\oplus V^-_2$ for $V_2^+\subseteq W\oplus 0$ and $V_2^-\subseteq 0\oplus W$. 
We need to require 
$0\oplus V_2^- \subseteq V_1\subseteq 
V_2^+\oplus W$. 
\end{itemize}
As a result, $\mathfrak{R}(\vv,\ww)$ is a union of two irreducible components $\Sigma_1$ and $\Sigma_2$ with $\Sigma_1$ isomorphic to the Lagrangian Grassmannian of $W\oplus W$ and $\Sigma_2$ isomorphic to the associated Grassmannian of $\mathcal{Q}_-\oplus \mathcal{T}_+$ over $\mathbb{P}^1\times \mathbb{P}^1$ with $\mathcal{T}^{+}$ the tautological bundle of the first factor, and $\mathcal{Q}^{-}$ the quotient bundle of the second factor. Their intersection is $\mathbb{P}^1\times \mathbb{P}^1$. 

\end{itemize}
\end{Eg}

\begin{Prop}\label{epsilonphiduality} We have 
\begin{equation}
\epsilon_i(F)=\phi_{\sigma(i)}(F^\perp),\quad 
\phi_i(F)=\epsilon_{\sigma(i)}(F^\perp).
\end{equation}
\end{Prop}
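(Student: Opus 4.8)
The plan is to avoid the complexes $C_i(F)$ altogether and argue through the geometric description from Proposition~\ref{Importantcomplex}: $\phi_i(F)=\chi(\rho_{i1}^{-1}(F))$ and $\epsilon_i(F)=\chi(\rho_{i2}^{-1}(F))$, where $\rho_{i1}^{-1}(F)$ is the variety of super-representations $F'\supseteq F$ in $\mathfrak{L}(\ww)$ with $\ddim F'/F=\mathbf{e}_i$, and $\rho_{i2}^{-1}(F)$ is the variety of sub-representations $F''\subseteq F$ with $\ddim F/F''=\mathbf{e}_i$. It therefore suffices to produce isomorphisms of varieties $\rho_{i1}^{-1}(F)\cong\rho_{\sigma(i),2}^{-1}(F^\perp)$ and $\rho_{i2}^{-1}(F)\cong\rho_{\sigma(i),1}^{-1}(F^\perp)$ and then take Euler characteristics.

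First I would record the vertex-wise behaviour of the annihilator. Since $\omega$ pairs $K_RW(m)$ with $K_RW(\sigma(m))$, the subrepresentation $F^\perp$ (well-defined by the lemma preceding \eqref{involutionsigma}) satisfies $F^\perp(m)=F(\sigma(m))^\perp\subseteq K_RW(m)$, so $\dim F^\perp(m)=\dim K_RW(m)-\dim F(\sigma(m))$. Two facts follow immediately: the operation $G\mapsto G^\perp$ reverses inclusions among subrepresentations of $K_RW|_{\mathcal{Q}}$ and is an involution (as $\omega$ is non-degenerate); and if $G\subseteq H$ with $\ddim H/G=\mathbf{e}_i$, then $H^\perp\subseteq G^\perp$ with $(\ddim G^\perp/H^\perp)_m=\dim H(\sigma(m))-\dim G(\sigma(m))=(\mathbf{e}_i)_{\sigma(m)}=(\mathbf{e}_{\sigma(i)})_m$, i.e.\ $\ddim G^\perp/H^\perp=\mathbf{e}_{\sigma(i)}$.

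Now I would assemble these. If $F'\in\rho_{i1}^{-1}(F)$, i.e.\ $F\subseteq F'$ and $\ddim F'/F=\mathbf{e}_i$, then $(F')^\perp\subseteq F^\perp$ and $\ddim F^\perp/(F')^\perp=\mathbf{e}_{\sigma(i)}$, so $(F')^\perp\in\rho_{\sigma(i),2}^{-1}(F^\perp)$; applying $G\mapsto G^\perp$ again gives the inverse map, since $\perp$ is an involution and $\sigma(F^\perp)=F$. Symmetrically, $F''\mapsto(F'')^\perp$ sends $\rho_{i2}^{-1}(F)$ bijectively onto $\rho_{\sigma(i),1}^{-1}(F^\perp)$. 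Because $\sigma\colon\mathfrak{L}(\ww)\to\mathfrak{L}(\ww)$ is an algebraic involution, these bijections are isomorphisms of varieties: concretely, $\sigma$ lifts to an isomorphism $\mathfrak{P}_i\to\mathfrak{P}_{\sigma(i)}$ given by $(F_1,F_2)\mapsto(F_2^\perp,F_1^\perp)$, which intertwines $\rho_{i1}$ with $\rho_{\sigma(i),2}$ and $\rho_{i2}$ with $\rho_{\sigma(i),1}$, and whose restriction over $F$ is the map above. Taking Euler characteristics then yields $\phi_i(F)=\chi(\rho_{\sigma(i),2}^{-1}(F^\perp))=\epsilon_{\sigma(i)}(F^\perp)$ and $\epsilon_i(F)=\chi(\rho_{\sigma(i),1}^{-1}(F^\perp))=\phi_{\sigma(i)}(F^\perp)$.

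I do not anticipate a serious obstacle; the only care needed is bookkeeping — remembering that annihilation swaps the vertex $m$ with $\sigma(m)$, and that $\sigma$ restricts to a genuine morphism on the Hecke fibres (so that the equalities are of Euler characteristics, not merely of point counts). An alternative would be to compare the complexes $C_i(F)$ and $C_{\sigma(i)}(F^\perp)$ directly via the adjointness relations \eqref{adjointofpairingB}; but these complexes are not term-wise linear duals of one another, so that route is less transparent, and I would prefer the geometric argument above.
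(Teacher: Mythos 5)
Your argument is correct and is exactly the paper's proof, merely written out in full: the paper also observes that $\sigma$ (i.e.\ $G\mapsto G^\perp$) restricts to an isomorphism $\rho_{i1}^{-1}(F)\cong\rho_{\sigma(i)2}^{-1}(F^\perp)$ and then invokes Proposition~\ref{Importantcomplex} to identify the Euler characteristics with $\phi_i$ and $\epsilon_{\sigma(i)}$. Your extra bookkeeping (that $\perp$ reverses inclusions, swaps the vertex $m$ with $\sigma(m)$, and sends $\mathbf{e}_i$ to $\mathbf{e}_{\sigma(i)}$) is precisely the content the paper leaves implicit.
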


\begin{proof}
Note that $\sigma$ sets an isomorphism between $\rho_{i1}^{-1}(F)$ and $\rho_{\sigma(i)2}^{-1}(F^\perp)$. 
Thus the results follows from Proposition \ref{Importantcomplex}, 
\end{proof}

\begin{Coro}\label{vvforsigmaquivervariety}
Assume $\ddim F=\vv$, and $\ddim F^\perp=\vv'$. 
Then $-\sigma(\ww-C\vv)=\ww-C\vv'$. 
In particular, the variety $\mathfrak{R}(\vv,\ww)$ is empty unless $\sigma(\ww-C\vv)=-(\ww-C\vv')$. 
\end{Coro}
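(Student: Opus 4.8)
The plan is to reduce everything to two facts already in hand: the local Euler-characteristic identity $\phi_i(F)-\epsilon_i(F)=(\ww-C\vv)_i$ recorded just after Proposition~\ref{Importantcomplex}, and the duality $\epsilon_i(F)=\phi_{\sigma(i)}(F^\perp)$, $\phi_i(F)=\epsilon_{\sigma(i)}(F^\perp)$ of Proposition~\ref{epsilonphiduality}. First I would fix $F\in\mathfrak{L}(\vv,\ww)$ and observe that its annihilator lies in $\mathfrak{L}(\vv',\ww)$, so that the same local identity applied to $F^\perp$ reads $\phi_j(F^\perp)-\epsilon_j(F^\perp)=(\ww-C\vv')_j$ for every $j\in\mathbb{I}$.

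Then, for each $i\in\mathbb{I}$, I would compute
\[
(\ww-C\vv)_i=\phi_i(F)-\epsilon_i(F)=\epsilon_{\sigma(i)}(F^\perp)-\phi_{\sigma(i)}(F^\perp)=-(\ww-C\vv')_{\sigma(i)},
\]
where the second equality is precisely Proposition~\ref{epsilonphiduality} and the last one is the local identity for $F^\perp$ after a sign flip. Since $(\ww-C\vv')_{\sigma(i)}$ is by definition the $i$-th coordinate of $\sigma(\ww-C\vv')$, this is the vector equation $\ww-C\vv=-\sigma(\ww-C\vv')$ in $\mathbb{Z}^{\mathbb{I}}$; applying the involution $\sigma$ to both sides and using $\sigma^2=\mathrm{id}$ gives the asserted identity $-\sigma(\ww-C\vv)=\ww-C\vv'$.

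For the last sentence, I would note that any $F\in\mathfrak{R}(\vv,\ww)$ satisfies $F^\perp=F$ by definition of the fixed loci, hence $\vv'=\ddim F^\perp=\vv$; substituting this into the identity just proved shows that $\mathfrak{R}(\vv,\ww)$ can be nonempty only when $\sigma(\ww-C\vv)=-(\ww-C\vv')$. I do not expect any genuine obstacle here: the argument is a direct splicing of the two cited statements. The only point that needs care is the bookkeeping between the action of $\sigma$ on the index set $\mathbb{I}$ and its induced linear action on dimension vectors, namely $(\sigma\mathbf{a})_i=a_{\sigma(i)}$, which is what converts the index shift $i\mapsto\sigma(i)$ coming from Proposition~\ref{epsilonphiduality} into the linear map $\sigma$ on the weight lattice; one should also note that for this formulation one does not even need $\sigma$-invariance of the Cartan matrix $C$, though it does hold.
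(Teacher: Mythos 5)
Your proof is correct. It is worth noting that the paper itself opens its proof of this corollary by remarking that the result ``can be shown simply by taking difference of'' Proposition~\ref{epsilonphiduality} --- which is exactly your argument --- but then elects to carry out a different, direct computation instead: it observes that $\ddim F+\sigma\ddim F^\perp=\vv'+\sigma\vv=\ddim K_RW=:\vv_0$ (because $F^\perp(i)$ is the annihilator of $F(\sigma(i))$ inside $K_RW(i)$), applies Proposition~\ref{dimKRW} to get $C\vv_0=\ww+\sigma\ww$, and rearranges $\ww+\sigma\ww-C(\sigma\vv+\vv')=0$ into the stated identity. So the two routes rest on genuinely different inputs: you combine the pointwise Euler-characteristic identity $\phi_i(F)-\epsilon_i(F)=(\ww-C\vv)_i$ with the fibre duality of Proposition~\ref{epsilonphiduality}, while the paper uses the global dimension of $K_RW$ together with the lowest-weight statement of Proposition~\ref{dimKRW}. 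Your version has the small advantages you point out: it never needs to commute $\sigma$ past $C$ (the paper's final rearrangement implicitly uses $\sigma C=C\sigma$), and it makes explicit the bookkeeping $(\sigma\mathbf{a})_i=a_{\sigma(i)}$ that converts the index shift from Proposition~\ref{epsilonphiduality} into the linear action of $\sigma$ on weights. Your reading of the ``in particular'' clause --- $F=F^\perp$ forces $\vv=\vv'$, so the identity becomes the anti-invariance $\sigma(\ww-C\vv)=-(\ww-C\vv)$ --- is also the intended one.
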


\begin{proof}
This can be shown simply by taking difference of \ref{epsilonphiduality}, but let us prove by direct computation.
Note that by our construction,  $\ddim F+\sigma\ddim F^\perp =\vv'+\sigma\vv=\sigma\vv'+\vv=\ddim K_RW=:\vv_0$. 
By proposition \ref{dimKRW}, we have $C\vv_0=\ww+\sigma\ww$. 
Hence we have
$\ww+\sigma\ww-C(\sigma\vv+\vv')=0$. 
\end{proof}

\section{Geometric realization of $\mathbf{U}^\iota$}

Given an involution $\sigma$ over $Q$, 
it induces an involution over $\mathfrak{g}$ 
\begin{equation}
E_i\longmapsto F_{\sigma(i)},\quad 
F_i\longmapsto E_{\sigma(i)},\quad 
H_i\longmapsto -H_{\sigma(i)}.
\end{equation}
We have a Cantan decomposition 
\begin{equation}
\mathfrak{g}=\mathfrak{k}\oplus \mathfrak{p}
\end{equation}
with $\mathfrak{k}$ (resp. $\mathfrak{p}$) the subspace of $\mathfrak{g}$ the eigenspace of $\sigma$ to $1$ (resp. $-1$). 
The pair $(\mathfrak{g},\mathfrak{k})$ forms a symmetric pair, see \cite{onishchik2012lie} for more background. 
Denote 
\begin{equation}
B_i=E_i+F_{\sigma(i)},\qquad h_i=H_i-H_{\sigma(i)}. 
\end{equation}
It is easy to show that $\mathfrak{k}$ is the Lie algebra generated by $B_i$ and $h_i$ with relations
\begin{gather}
h_{i}+h_{\sigma(i)}=0,\label{sigmareltaion}\\
[h_i,h_j]=0, \quad 
[h_i,B_j]=(c_{ij}-c_{\sigma(i)j})B_j\label{Rootreltaion}\\
[B_i,B_{\sigma(i)}]=h_i\label{EFrelation}\\
[B_i,B_j]=0\quad \text{ for $\sigma(i)\neq j$ and $c_{ij}=0$}\label{SerreRelation1}\\
[B_i,[B_i,B_j]]=0\quad \text{ for $\sigma(i)\neq i$ and $c_{ij}=-1$}\label{SerreRelation2}\\
[B_i,[B_i,B_j]]=B_j \quad \text{ for $\sigma(i)=i$ with $c_{ij}=-1$}\label{iSerreRelation}
\end{gather}
Let $\mathbf{U}^{\iota}$ be the universal enveloping algebra of $\mathfrak{k}$. 
Actually, this is the classical limit of the quasi-split quantum symmetric pair of type AIII introduced in \cite{Letzter1999}. 

Analogy to Hecke correspondence $\mathfrak{P}_i$, we define the \emph{$\iota$Hecke correspondence} $\mathfrak{B}_i$ to be the variety of pairs $(F_1,F_2)\in \mathfrak{R}(\ww)\times \mathfrak{R}(\ww)$ such that
\begin{equation}
\ddim F_1/F_1\cap F_2=\mathbf{e}_{\sigma(i)}\text{ and }
\ddim F_2/F_1\cap F_2=\mathbf{e}_i.
\end{equation}
Now we have two projections 
$$\xymatrix{&\mathfrak{B}_i\ar[dr]^{\pi_{i2}}\ar[dl]_{\pi_{i1}}\\\mathfrak{R}(\ww)&&\mathfrak{R}(\ww).}$$

For $i\in \mathbb{I}$, assume $\sigma(i)\neq i$. 
Let $O(i)$ be the neighborhood of $i$, i.e. it contains all edges incident to $i$. 
Assume we are given two representations $F$ and $G$ such that $F$ and $G$ agree over the full quiver of vertices $\mathbb{I}\setminus \{i,\sigma(i)\}$. Then we can define 
$F\wr_i G$ to be the representation $H$ such that
$$H|_{O(i)}=F|_{O(i)}\cap G|_{O(i)},\,\text{and } 
H|_{O(\sigma(i))}=F|_{O(\sigma(i))}+ G|_{O(\sigma(i))}.$$
This is well-defined since there is no common edge in $O(i)$ and $O(\sigma(i))$ by our assumption.
Define $F\wr^i G=F\wr_{\sigma(i)}G$. We have the following equality of dimension vector
$$\ddim(F\wr_iG)+\ddim(F\wr^iG)=\ddim(F)+\ddim(G). $$
Moreover, if $F,G\in \mathfrak{R}(\ww)$, then so are $F\wr_iG$ and $F\wr^iG$.

\begin{Lemma}\label{reducetoHeckeCorr}For $\sigma(i)\neq i$, consider the following commutative diagram 
$$\xymatrix{
\mathfrak{B}_i\ar[r]^g\ar[d]_{\pi_{i2}}&\mathfrak{P}_i\ar[d]^{\rho_{i2}}\\
\mathfrak{R}(\ww)\ar[r]^{\subseteq} & \mathfrak{L}(\ww).}$$
where $g(F_1,F_2)=(F_1\cap F_2,F_2)$. 
Then $g$ is an isomorphism onto $\rho_{i2}^{-1}(\mathfrak{R}(\ww))$. 
\end{Lemma}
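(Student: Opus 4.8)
The plan is to exhibit an explicit inverse of $g$. First, $g$ is well defined with the stated target: for $(F_1,F_2)\in\mathfrak{B}_i$ the intersection $F_1\cap F_2$ is a subrepresentation of $F_2$, hence of $K_RW|_{\mathcal{Q}}$, so it lies in $\mathfrak{L}(\ww)$ by Proposition~\ref{LissubofKRW}; combined with the hypothesis $\ddim F_2/(F_1\cap F_2)=\mathbf{e}_i$ this gives $g(F_1,F_2)=(F_1\cap F_2,F_2)\in\mathfrak{P}_i$, and since $\rho_{i2}$ returns the larger member $F_2\in\mathfrak{R}(\ww)$, it lands in $\rho_{i2}^{-1}(\mathfrak{R}(\ww))$. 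On each piece of $\mathfrak{B}_i$ where $F_1$ and $F_2$ have fixed dimension vectors all the relevant ranks are constant, so $g$ is there a morphism of varieties.

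The heart of the matter is that $F_1$ is recovered from $F_1':=F_1\cap F_2$ and $F_2':=F_2$ in a forced way. As $F_1,F_2\in\mathfrak{R}(\ww)$ we have $F_1^\perp=F_1$ and $F_2^\perp=F_2$, hence $(F_1')^\perp=(F_1\cap F_2)^\perp=F_1^\perp+F_2^\perp=F_1+F_2$; thus $F_1'\subseteq F_2'\subseteq (F_1')^\perp$, and $F_1$ is squeezed between $F_1'$ and $(F_1')^\perp$ with $F_1\cap F_2'=F_1'$. Applying the identity $\ddim F+\sigma\ddim F^\perp=\ddim K_RW$ from the proof of Corollary~\ref{vvforsigmaquivervariety}, the successive dimension jumps along $F_1'\subseteq F_2'\subseteq(F_1')^\perp$ are $\mathbf{e}_i$ and then $\sigma\mathbf{e}_i=\mathbf{e}_{\sigma(i)}$, so $(F_1')^\perp$ differs from $F_1'$ only at the two vertices $i$ and $\sigma(i)$, and $V:=(F_1')^\perp/F_1'$ is one-dimensional at each of $i$ and $\sigma(i)$ and zero elsewhere. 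Since $\ddim F_1/F_1'=\mathbf{e}_{\sigma(i)}$ and $\sigma(i)\ne i$, the subspace $F_1/F_1'$ of $V$ is one-dimensional and zero at the vertex $i$, hence equals the whole $\sigma(i)$-component of $V$; equivalently $F_1$ agrees with $F_1'$ off the vertex $\sigma(i)$ and $F_1(\sigma(i))=(F_1')^\perp(\sigma(i))$. Thus $F_1$ is uniquely determined by $g(F_1,F_2)$, so $g$ is injective.

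For surjectivity I run this construction backwards: given $(F_1',F_2')\in\rho_{i2}^{-1}(\mathfrak{R}(\ww))$, put $\widetilde F_1(j)=F_1'(j)$ for $j\ne\sigma(i)$ and $\widetilde F_1(\sigma(i))=(F_1')^\perp(\sigma(i))$, and let the inverse map send $(F_1',F_2')$ to $(\widetilde F_1,F_2')$. There are three things to check. (i) $\widetilde F_1$ is a subrepresentation of $K_RW|_{\mathcal{Q}}$: only arrows incident to $\sigma(i)$ need attention, and since $\sigma(i)\ne i$ every neighbour $b$ of $\sigma(i)$ in the type-$A$ diagram is distinct from $i$ and $\sigma(i)$, so $(F_1')^\perp(b)=F_1'(b)=\widetilde F_1(b)$ and closedness under these arrows follows from the lemma that the annihilator of a subrepresentation is again a subrepresentation, applied to $(F_1')^\perp$. (ii) $(\widetilde F_1)^\perp=\widetilde F_1$: a componentwise computation using the double-annihilator identity $((F_1')^\perp)^\perp=F_1'$ and, again, that $(F_1')^\perp$ and $F_1'$ coincide off $\{i,\sigma(i)\}$, so $\widetilde F_1\in\mathfrak{R}(\ww)$. (iii) $\widetilde F_1\cap F_2'=F_1'$ with the dimension jumps $\mathbf{e}_{\sigma(i)}$ and $\mathbf{e}_i$, so $(\widetilde F_1,F_2')\in\mathfrak{B}_i$ and $g(\widetilde F_1,F_2')=(F_1',F_2')$. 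Together with the rigidity of the previous paragraph this gives a two-sided inverse, and it is again algebraic, being built from intersections, sums and annihilators of the tautological families; hence $g$ is an isomorphism onto $\rho_{i2}^{-1}(\mathfrak{R}(\ww))$.

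I expect the main obstacle to be (i)--(ii): verifying that the reconstructed $\widetilde F_1$ is simultaneously a genuine subrepresentation and $\sigma$-fixed. This is precisely where the non-adjacency of $i$ and $\sigma(i)$ and the adjointness relation $\omega(h(x),y)=\omega(x,\sigma(\bar h)y)$ of Corollary~\ref{symplecticformoverKRW} are used --- the latter through the lemma on annihilators --- and it is the step with no analogue, and which fails, when $\sigma(i)=i$.
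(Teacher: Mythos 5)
Your proof is correct and is essentially the paper's argument: the paper's one-line inverse $(F_0,F_2)\mapsto(F_0\wr_i F_0^\perp,F_2)$ is exactly your $\widetilde F_1$ (agree with $F_1'$ away from $\sigma(i)$, take $(F_1')^\perp(\sigma(i))$ there), since $F_0\cap F_0^\perp=F_0$ and $F_0+F_0^\perp=F_0^\perp$ on the relevant neighborhoods. You simply spell out the verifications (subrepresentation, $\sigma$-fixedness, dimension bookkeeping, non-adjacency of $i$ and $\sigma(i)$) that the paper leaves implicit in the definition of $\wr_i$.
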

\begin{proof}
The inverse is given by $(F_0,F_2)\mapsto (F_0\wr_i F_0^\perp,F_2)$. 
\end{proof} 

\begin{Coro}\label{computationoffibre1}
For $F\in \mathfrak{R}(\vv,\ww)$, if $\sigma(i)\neq i$, 
\begin{gather}
\chi(\pi_{i1}^{-1}(F))=\phi_i(F),\\
\chi(\pi_{i2}^{-1}(F))=\epsilon_i(F).
\end{gather}
\end{Coro}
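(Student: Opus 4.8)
The plan is to deduce Corollary \ref{computationoffibre1} from Lemma \ref{reducetoHeckeCorr} together with the results on the Hecke correspondence $\mathfrak{P}_i$ already recorded, namely Proposition \ref{Importantcomplex} and the definitions of $\phi_i,\epsilon_i$. First I would treat the case of $\pi_{i2}$, where the work has essentially been done: Lemma \ref{reducetoHeckeCorr} identifies $\mathfrak{B}_i$ with $\rho_{i2}^{-1}(\mathfrak{R}(\ww))\subseteq \mathfrak{P}_i$ via the isomorphism $g$, and under this identification the square in that lemma shows $\pi_{i2}=\rho_{i2}\circ g$. Hence for $F\in\mathfrak{R}(\vv,\ww)\subseteq\mathfrak{L}(\vv,\ww)$ the fibre $\pi_{i2}^{-1}(F)$ is carried isomorphically onto $\rho_{i2}^{-1}(F)$ (the point $F$ lies in $\mathfrak{R}(\ww)$, so nothing is lost in passing to the preimage of $\mathfrak{R}(\ww)$). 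Taking Euler characteristics and invoking the definition $\epsilon_i(F)=\chi(\rho_{i2}^{-1}(F))$ gives $\chi(\pi_{i2}^{-1}(F))=\epsilon_i(F)$.

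Next I would handle $\pi_{i1}$ by the symmetry $\sigma$. The involution $\sigma$ on $\mathfrak{R}(\ww)$ is the identity (these are its fixed points), but on $\mathfrak{B}_i$ it acts by $(F_1,F_2)\mapsto(F_2^\perp,F_1^\perp)$, which interchanges the two defining conditions $\ddim F_1/F_1\cap F_2=\mathbf e_{\sigma(i)}$ and $\ddim F_2/F_1\cap F_2=\mathbf e_i$ after replacing $i$ by $\sigma(i)$; concretely it yields an isomorphism $\mathfrak{B}_i\xrightarrow{\ \sim\ }\mathfrak{B}_{\sigma(i)}$ swapping $\pi_{i1}$ with $\pi_{\sigma(i)2}$. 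Applying the $\pi_{\bullet 2}$ case already proved to the index $\sigma(i)$ gives $\chi(\pi_{i1}^{-1}(F))=\chi(\pi_{\sigma(i)2}^{-1}(F^\perp))=\epsilon_{\sigma(i)}(F^\perp)$, and then Proposition \ref{epsilonphiduality} rewrites $\epsilon_{\sigma(i)}(F^\perp)=\phi_i(F)$. (One should note $F^\perp=F$ here since $F\in\mathfrak{R}(\ww)$, so this also reads simply $\chi(\pi_{i1}^{-1}(F))=\phi_i(F)$ once Proposition \ref{epsilonphiduality} is applied with $F^\perp=F$.) Alternatively, one can run Lemma \ref{reducetoHeckeCorr} with $\rho_{i1}$ in place of $\rho_{i2}$ — the map $(F_1,F_2)\mapsto(F_1,F_1\cap F_2)$ is an isomorphism onto $\rho_{i1}^{-1}(\mathfrak{R}(\ww))$ — and conclude directly from $\phi_i(F)=\chi(\rho_{i1}^{-1}(F))$; I would mention both routes but present the $\sigma$-symmetry one as the clean argument.

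The main thing to be careful about, rather than a genuine obstacle, is making sure the fibre over $F$ of the \emph{restricted} map $\rho_{i2}\colon\rho_{i2}^{-1}(\mathfrak{R}(\ww))\to\mathfrak{R}(\ww)$ really coincides with the fibre $\rho_{i2}^{-1}(F)$ of the unrestricted map — this is immediate because $F\in\mathfrak{R}(\ww)$, but it is the only place where the hypothesis $F\in\mathfrak{R}(\vv,\ww)$ (as opposed to $F\in\mathfrak{L}(\vv,\ww)$) is used, so it deserves an explicit sentence. Everything else is formal: composition of an isomorphism with a map does not change fibres, Euler characteristic is an isomorphism invariant, and the equalities $\phi_i(F)=\chi(\rho_{i1}^{-1}(F))$, $\epsilon_i(F)=\chi(\rho_{i2}^{-1}(F))$ are the definitions. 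So the proof is short; its content is entirely in Lemma \ref{reducetoHeckeCorr} and Proposition \ref{epsilonphiduality}, which are already available.
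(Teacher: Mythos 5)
Your proof is correct and is exactly the argument the paper intends: the corollary is stated there without proof, as an immediate consequence of Lemma \ref{reducetoHeckeCorr} for $\pi_{i2}$, combined (for $\pi_{i1}$) with the swap $(F_1,F_2)\mapsto(F_2,F_1)$ identifying $\pi_{i1}$ with $\pi_{\sigma(i)2}$ and then Proposition \ref{epsilonphiduality} with $F^\perp=F$. One small correction to your parenthetical alternative route: to land in $\mathfrak{P}_i$ as the paper defines it (first member contained in the second, with quotient of dimension vector $\mathbf{e}_i$), the map should be $(F_1,F_2)\mapsto(F_1,F_1+F_2)$ rather than $(F_1,F_1\cap F_2)$.
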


Now turn to the case $\sigma(i)=i$. 
Note that $(\ww-C\vv)_i=0$ by Proposition \ref{vvforsigmaquivervariety}. In particular $\epsilon_i(F)=\phi_i(F)$ for $F\in \mathfrak{R}(\vv,\ww)$. 
We need the following linear algebra. 

\begin{Lemma}\label{SymplecticLineaAlgebra1}
Let $V$ be a symplectic vector space of dimension $2n$. 
For a given Lagrangian subspace $L_0$, the variety of Lagrangian subspace $L$ such that $\dim L\cap L_0=n-1$ has Euler characteristic $n$. 
\end{Lemma}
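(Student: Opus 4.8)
The plan is to parametrize the variety in question as a union of two strata according to the relative position of $L$ and $L_0$, count the Euler characteristic of each, and add them using the additivity principle (\ref{additionprinciple}). Fix a Lagrangian complement $L_0'$ of $L_0$ in $V$, so $V=L_0\oplus L_0'$ and the symplectic form identifies $L_0'$ with $L_0^*$. A Lagrangian $L$ with $\dim(L\cap L_0)=n-1$ determines the hyperplane $H=L\cap L_0\subseteq L_0$ and, conversely, is contained in the symplectic subspace $H^{\angle}/H$ (a $2$-dimensional symplectic space), where $L/H$ must be a line in $H^{\angle}/H$ distinct from the image of $L_0$. So first I would set up the projection $L\mapsto H$ onto the space of hyperplanes of $L_0$, which is $\mathbb{P}((L_0)^*)\cong\mathbb{P}^{n-1}$.

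Next I would analyze the fibre of this projection over a fixed hyperplane $H$. The Lagrangians $L$ with $L\cap L_0=H$ biject with lines in the $2$-dimensional symplectic space $H^{\angle}/H$ other than the line $L_0/H$; since $\mathbb{P}^1$ minus a point is $\mathbb{A}^1$, each fibre is an affine line, with $\chi=1$. Granting that the projection is a (Zariski-locally trivial, or at least Euler-characteristic-multiplicative) fibration — which it is, being the complement of a section inside a $\mathbb{P}^1$-bundle — the multiplicativity of $\chi$ for fibre bundles gives $\chi=\chi(\mathbb{P}^{n-1})\cdot\chi(\mathbb{A}^1)=n\cdot 1=n$, as claimed.

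Alternatively, and perhaps more cleanly, I would avoid fibration subtleties by a direct cell-type decomposition: the locus where $L\cap L_0'=0$ (i.e. $L$ is a graph over $L_0$) is empty in this stratum since then $\dim(L\cap L_0)=0\neq n-1$; instead one stratifies by $\dim(L\cap L_0')$. One checks $\dim(L\cap L_0')\in\{0,1\}$ is forced, and the open stratum $\dim(L\cap L_0')=1$ together with the relation $L\cap L_0=H$ gives an affine-space bundle over $\mathbb{P}^{n-1}$, while the closed stratum $L\subseteq L_0^{\angle\text{-adapted}}$ contributes in a controlled way; summing Euler characteristics again yields $n$. Either route reduces the count to $\chi(\mathbb{P}^{n-1})=n$.

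The main obstacle I anticipate is purely bookkeeping: making the identification ``Lagrangians $L$ with $L\cap L_0=H$ $\leftrightarrow$ lines in $H^{\angle}/H$ other than $L_0/H$'' precise, i.e. checking that every Lagrangian $L$ with $\dim(L\cap L_0)=n-1$ automatically satisfies $L\subseteq H^{\angle}$ (this follows because $L$ is isotropic and contains $H$, so $L\perp H$, hence $L\subseteq H^{\angle}$), and that the resulting line in the quotient is Lagrangian in $H^{\angle}/H$ (automatic, as $\dim=1$ in a $2$-dimensional symplectic space) and distinct from $L_0/H$ (else $L=L_0$). Once this dictionary is in place the Euler characteristic computation is immediate, so I expect the write-up to be short.
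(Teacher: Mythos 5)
Your proof is correct and coincides with the paper's own argument: both fibre the variety over $\mathbb{P}(L_0^*)\cong\mathbb{P}^{n-1}$ via $L\mapsto L\cap L_0$, identify each fibre with $\mathbb{P}^1$ minus the point $L_0$, and multiply Euler characteristics to get $n\cdot 1=n$. The dictionary you flag as the ``main obstacle'' is exactly the content the paper leaves implicit, so your write-up is if anything more careful.
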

\begin{proof}
Let this variety be $\Sigma$. Then we have a morphism $\Sigma=\mathbb{P}(L^*)$ by sending $L$ to $L\cap L_0$. 
This is a fibre bundle with fibre at $L'$ the variety of Lagrangian subspace between $L'$ and $L^{\prime\perp}$ other than $L_0$. The fibre is just $\mathbb{C}P^1\setminus \mathsf{pt}$ whose Euler characteristic is $1$. 
\end{proof}

\begin{Coro}\label{computationoffibre2}
For $F\in \mathfrak{R}(\vv,\ww)$, if $\sigma(i)= i$, 
\begin{equation}
\chi(\pi_{i1}^{-1}(F))=\phi_i(F)=
\chi(\pi_{i2}^{-1}(F))=\epsilon_i(F).
\end{equation}
\end{Coro}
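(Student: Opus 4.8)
The plan is to reduce the case $\sigma(i)=i$ to the symplectic linear algebra of Lemma \ref{SymplecticLineaAlgebra1}, in parallel with how Lemma \ref{reducetoHeckeCorr} handled the case $\sigma(i)\neq i$. First I would fix $F\in\mathfrak{R}(\vv,\ww)$ and describe the fibre $\pi_{i1}^{-1}(F)$ explicitly: a point of this fibre is a pair $(F,F')$ with $F'\in\mathfrak{R}(\ww)$, $\ddim F/F\cap F' = \mathbf{e}_{\sigma(i)}=\mathbf{e}_i$ and $\ddim F'/F\cap F' = \mathbf{e}_i$, so since $\sigma(i)=i$ the representations $F$ and $F'$ agree away from vertex $i$ and differ by replacing $F(i)$ with a hyperplane $F'(i)\subseteq F(i)$ of codimension one, together with a line back up — but the $\mathfrak{R}$-condition $F'=F'^\perp$ forces this to be governed by the symplectic form $\omega$ of Corollary \ref{symplecticformoverKRW} restricted to the relevant local data at vertex $i$. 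Concretely, because $\sigma(i)=i$, the form $\omega$ restricts to a symplectic form on a space built from $F(i)$ and its incident arrows, and self-duality of $F$ makes the image of $F(i)$ inside that space Lagrangian; the fibre is then identified with the variety of Lagrangians meeting a fixed Lagrangian $L_0$ in codimension one.

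The key steps, in order, are: (1) unwind the definition of $\mathfrak{B}_i$ at a self-dual vertex and show that choosing $(F,F')\in\pi_{i1}^{-1}(F)$ amounts to choosing a subrepresentation $F'\subseteq F$ with $\ddim F/F'=\mathbf{e}_i$ that is again self-dual — equivalently, after passing to cohomology of the complex $C_i(F)$ of Proposition \ref{Importantcomplex}, choosing a hyperplane in $H^0(C_i(F))$ that is coisotropic (or a line that is isotropic) for the symplectic form induced by $\omega$; (2) check that $\omega$ does induce a symplectic (nondegenerate alternating) form on $H^0(C_i(F))$ when $\sigma(i)=i$, using the compatibility $\omega(h(x),y)=\omega(x,\sigma(\bar h)y)$ from Corollary \ref{symplecticformoverKRW} together with $H^{-1}(C_i(F))=0$ and $(\ww-C\vv)_i=0$, so that $\dim H^0(C_i(F))=\dim H^1(C_i(F))=\phi_i(F)=\epsilon_i(F)=:n$; (3) identify $\pi_{i1}^{-1}(F)$ with the variety $\Sigma$ of Lagrangian subspaces $L$ of a $2n$-dimensional symplectic space with $\dim L\cap L_0=n-1$ for the Lagrangian $L_0$ corresponding to $F(i)$ itself, and invoke Lemma \ref{SymplecticLineaAlgebra1} to get Euler characteristic $n=\phi_i(F)$; (4) observe that $\sigma$ (sending $F'\mapsto F'^\perp$) interchanges $\pi_{i1}^{-1}(F)$ and $\pi_{i2}^{-1}(F)$ since $\sigma(i)=i$ fixes the vertex, giving the remaining equality, with $\phi_i(F)=\epsilon_i(F)$ already known from Proposition \ref{vvforsigmaquivervariety}.

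The main obstacle I anticipate is step (2)–(3): making precise the claim that the relevant self-dual subrepresentations of $F$ are in bijection with Lagrangians in a single symplectic vector space, rather than Lagrangians subject to several incidence conditions coming from the various arrows $h:i\to j$. One has to verify that the symplectic pairing $\omega$ descends to $H^0(C_i(F))$ as a genuine (nondegenerate) symplectic form — the nondegeneracy is exactly where $B_{ij}$ being a perfect pairing and $\sigma(i)=i$ enter — and that the condition "$F'$ self-dual" translates without residue into "$L$ Lagrangian", with no leftover constraint; the dimension bookkeeping $\dim H^0=\dim H^1$ forced by $(\ww-C\vv)_i=0$ is what guarantees the ambient space is even-dimensional and the generic member is Lagrangian. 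Once that identification is clean, Lemma \ref{SymplecticLineaAlgebra1} finishes it immediately, and the $\sigma$-symmetry giving the $\pi_{i2}$ statement is formal.
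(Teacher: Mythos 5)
Your overall strategy --- identify the fibre with the variety of Lagrangians meeting a fixed Lagrangian in codimension one and invoke Lemma \ref{SymplecticLineaAlgebra1} --- is the paper's, and your step (4) is harmless (when $\sigma(i)=i$ the defining conditions of $\mathfrak{B}_i$ are symmetric in $(F_1,F_2)$, so the two fibres literally coincide). But the two identifications you flag as the "main obstacle" are wrong as written, and they are where the content of the corollary lies. First, a point of $\pi_{i1}^{-1}(F)$ is \emph{not} a self-dual subrepresentation $F'\subseteq F$ with $\ddim F/F'=\mathbf{e}_i$: no such $F'$ can exist, since $F'(i)$ would be a hyperplane of the Lagrangian $F(i)\subseteq K_RW(i)$, hence isotropic but never Lagrangian, contradicting $F'=F'^{\perp}$. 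The correct unwinding is that $F'$ agrees with $F$ away from $i$, while $F'(i)$ is another Lagrangian of $K_RW(i)$ with $\dim\bigl(F(i)\cap F'(i)\bigr)=\dim F(i)-1$ and containing $\sum_{h:j\to i}h(F(j))$; the outgoing-arrow condition $g(F'(i))\subseteq F(j)$ is then automatic by duality via $\omega(g(x),y)=\omega(x,\sigma(\bar g)y)$.

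Second, the symplectic space to which Lemma \ref{SymplecticLineaAlgebra1} is applied cannot be $H^0(C_i(F))$: that space has dimension $n=\phi_i(F)=\epsilon_i(F)$, which need not be even and is in any case inconsistent with your own step (3), which requires a $2n$-dimensional space. The right object is the symplectic reduction $\mathcal{I}^{\perp}/\mathcal{I}$ of $K_RW(i)$, where $\mathcal{I}=\sum_{h:j\to i}h(F(j))$ is isotropic because it lies in the Lagrangian $F(i)$. This reduction has dimension $2\epsilon_i(F)$, the image of $F(i)$ is a Lagrangian $L_0$ in it, and Lagrangians of $K_RW(i)$ containing $\mathcal{I}$ correspond bijectively to Lagrangians of the reduction; the fibre becomes exactly the variety of Lemma \ref{SymplecticLineaAlgebra1} with $n=\epsilon_i(F)$, giving Euler characteristic $\epsilon_i(F)=\phi_i(F)$. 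With these two corrections your argument closes up and agrees with the paper's proof.
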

\begin{proof}
It is clear that $\phi_i(F)$ is the variety of Lagrangian subspaces of $K_RW(i)$ containing $\sum_{h:j\to i} h(F(j))$ other than $F(i)$. 
By the Lemma \ref{SymplecticLineaAlgebra1} above, it is exactly $\epsilon_i(F)$.
\end{proof} 

The following theorem is the main theorem of this paper. 

\begin{Th}\label{MainTh}
For type AIII,
there is a well-defined action of $\mathbf{U}^{\iota}$-action over $\Fun(\mathfrak{R}(\ww))$ by 
\begin{gather}
h_i\longmapsto (\ww-C\vv)_i\\
B_i\longmapsto \pi_{i1*}\pi_{i2}^*
\end{gather}
\end{Th}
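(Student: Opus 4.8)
The plan is to verify the defining relations of $\mathbf{U}^\iota$ one at a time on $\Fun(\mathfrak{R}(\ww))$, mimicking Nakajima's strategy for Theorem \ref{MainThNak94} but feeding in the $\iota$-specific input provided by the fibre computations of Corollaries \ref{computationoffibre1} and \ref{computationoffibre2} and by Proposition \ref{vvforsigmaquivervariety}. First I would record the weight-space decomposition: since $\mathfrak{R}(\ww)=\bigsqcup\mathfrak{R}(\vv,\ww)$ and $\mathfrak{R}(\vv,\ww)$ is empty unless $\sigma(\ww-C\vv)=-(\ww-C\vv')$, the operator $h_i\mapsto(\ww-C\vv)_i$ is well-defined and automatically satisfies $h_i+h_{\sigma(i)}=0$ (relation \eqref{sigmareltaion}) and $[h_i,h_j]=0$; the relation $[h_i,B_j]=(c_{ij}-c_{\sigma(i)j})B_j$ (the second half of \eqref{Rootreltaion}) follows because $\pi_{j1}$ and $\pi_{j2}$ shift the dimension vector by $-\mathbf e_{\sigma(j)}$ and $-\mathbf e_j$ respectively relative to the common sub, so $B_j$ changes $\ww-C\vv$ by $C(\mathbf e_{\sigma(j)}-\mathbf e_j)$, and pairing with $h_i$ gives exactly the stated scalar.

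Next I would treat the ``Cartan'' relation \eqref{EFrelation}, $[B_i,B_{\sigma(i)}]=h_i$, and the ``$\iota$Serre'' relations \eqref{SerreRelation1}, \eqref{SerreRelation2}, \eqref{iSerreRelation}. For \eqref{EFrelation}, the key point is to compute $\pi_{i1*}\pi_{i2}^*\pi_{\sigma(i)1*}\pi_{\sigma(i)2}^*-\pi_{\sigma(i)1*}\pi_{\sigma(i)2}^*\pi_{i1*}\pi_{i2}^*$ evaluated at $F\in\mathfrak{R}(\vv,\ww)$. Using the fibre-product (Cartesian square) formula \eqref{Cartesiansquare} as in \cite{Nakajima94}, each composite becomes a sum over an intermediate variety, and the difference telescopes to $\chi(\pi_{i1}^{-1}F)-\chi(\pi_{i2}^{-1}F)$ times the identity when $\sigma(i)\ne i$ — which by Corollary \ref{computationoffibre1} equals $\phi_i(F)-\epsilon_i(F)=(\ww-C\vv)_i$ — and to the analogous statement for $\sigma(i)=i$ where Corollary \ref{computationoffibre2} gives $\phi_i(F)=\epsilon_i(F)$, consistent with $(\ww-C\vv)_i=0$. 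The bookkeeping that makes the off-diagonal terms cancel is exactly Nakajima's Lemma (comparing the two iterated Hecke correspondences), and Lemma \ref{reducetoHeckeCorr} lets me transport the $\sigma(i)\ne i$ case directly to the classical Hecke-correspondence computation on $\mathfrak{L}(\ww)$; the $\sigma(i)=i$ case instead needs the symplectic linear algebra of Lemma \ref{SymplecticLineaAlgebra1}, and the appearance of $B_j$ on the right-hand side of \eqref{iSerreRelation} (rather than $0$) will come from the Euler characteristic $n$ (rather than $n-1$ or $n+1$) counted there.

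For the Serre-type relations I would proceed as follows. Relation \eqref{SerreRelation1}: when $\sigma(i)\ne j$ and $c_{ij}=0$ the vertices $i,\sigma(i),j,\sigma(j)$ are mutually non-adjacent, so the operations ``modify near $i$'' and ``modify near $j$'' act on disjoint parts of the quiver and literally commute as correspondences — one checks $\mathfrak{B}_i\times_{\mathfrak{R}(\ww)}\mathfrak{B}_j\cong\mathfrak{B}_j\times_{\mathfrak{R}(\ww)}\mathfrak{B}_i$ compatibly with the projections. Relation \eqref{SerreRelation2}: for $\sigma(i)\ne i$ and $c_{ij}=-1$, $B_i=E_i+F_{\sigma(i)}$ on the constructible-function level decomposes (again via Lemma \ref{reducetoHeckeCorr} and Corollary \ref{computationoffibre1}) into the classical $E_i$ and $F_{\sigma(i)}$ operators of Theorem \ref{MainThNak94}, and since $\{i,j\}$ and $\{\sigma(i),\sigma(j)\}$ are disjoint, the classical degree-two Serre relations $[E_i,[E_i,E_j]]=0$ and $[F_{\sigma(i)},[F_{\sigma(i)},F_{\sigma(j)}]]=0$ plus commutativity of the two families give \eqref{SerreRelation2}. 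Relation \eqref{iSerreRelation} is the genuinely new one: here $\sigma(i)=i$, so $B_i=E_i+F_i$ does not split into commuting pieces, and I must show $[B_i,[B_i,B_j]]=B_j$ by a direct computation on $\mathfrak{R}(\ww)$ — this is where I expect the main obstacle to lie. The strategy is to evaluate both sides at an arbitrary $F$ and reduce, via the complex $C_i(F)$ and Proposition \ref{Importantcomplex} together with the rank-one symplectic modification of Lemma \ref{SymplecticLineaAlgebra1}, to an identity of Euler characteristics on small partial-flag-type fibres; one expects it to boil down to the $\mathfrak{sl}_2$ (or rather rank-one symmetric pair) computation where $B=E+F$ satisfies $[B,[B,B\cdot{-}]]$ relations governed by the $n$ versus $n\pm1$ counting, and it should suffice to check it on the universal local model, e.g. the situation of Example \ref{iSmallEgA}. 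The careful handling of signs in the commutator (recall $B_i=E_i+F_{\sigma(i)}$ introduces cross terms $[E_i,F_i]=H_i$) and verifying that these cross terms assemble to precisely $B_j$ on the right is the delicate part; everything else is a routine, if lengthy, application of the push-pull formalism of the ``$\Fun$'' functors already set up in the excerpt.
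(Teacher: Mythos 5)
Your overall strategy---verify the relations (\ref{sigmareltaion})--(\ref{iSerreRelation}) one at a time via the push--pull formalism and the fibre Euler characteristics of Corollaries \ref{computationoffibre1} and \ref{computationoffibre2}---is the paper's strategy, and your treatment of (\ref{sigmareltaion}), (\ref{Rootreltaion}) and (\ref{EFrelation}) matches it (for (\ref{EFrelation}) the paper splits the fibre product over the diagonal and its complement and shows the off-diagonal contributions of $\mathfrak{B}_i\mathfrak{B}_{\sigma(i)}$ and $\mathfrak{B}_{\sigma(i)}\mathfrak{B}_i$ cancel because each fibre is the single point $F_1\wr^iF_2$, resp.\ $F_1\wr_iF_2$). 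But there are genuine gaps. First, for (\ref{SerreRelation1}) you assert that $i,\sigma(i),j,\sigma(j)$ are mutually non-adjacent; this is false: the hypothesis is only $c_{ij}=0$ and $\sigma(i)\ne j$, and one can have $c_{\sigma(i)j}=-1$ (e.g.\ $i=1$, $j=4$ in $A_5$), in which case the two correspondences do involve a common edge and the paper needs a separate argument, identifying both composites with a variety $\mathfrak{K}$ by exhibiting the unique representation in each fibre. Second, for (\ref{SerreRelation2}) your proposed reduction ``$B_i$ decomposes into the classical $E_i$ and $F_{\sigma(i)}$ of Theorem \ref{MainThNak94}'' is not available: those operators act on $\Fun(\mathfrak{L}(\ww))$ and do not preserve $\Fun(\mathfrak{R}(\ww))$, and $\mathfrak{B}_i$ is a single coupled correspondence (adding at $i$ forces removing at $\sigma(i)$ via $F\mapsto F^\perp$), not a union of two commuting ones. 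The paper instead computes the fibres of $\mathfrak{B}_i\mathfrak{B}_i\mathfrak{B}_j$, $\mathfrak{B}_i\mathfrak{B}_j\mathfrak{B}_i$, $\mathfrak{B}_j\mathfrak{B}_i\mathfrak{B}_i$ directly in four cases, following Lusztig's original argument.

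Third, and most seriously, for the $\iota$Serre relation (\ref{iSerreRelation}) you only announce a ``direct computation'' reducible to a rank-one local model; this is not an argument, and it misses the two inputs the paper actually requires: (a) Lemma \ref{specialintypeA}, which uses the symplectic form of Corollary \ref{symplecticformoverKRW} (via $\omega(h(x),\sigma(h)(x))=\omega(\sigma(h)(x),h(x))$, contradicting skew-symmetry) to show that for $(F_1,F_2)\in\mathfrak{B}_j$ the spaces $\mathcal{I}(F_1)$ and $\mathcal{I}(F_2)$ are always strictly nested; and (b) the symplectic linear algebra of Lemmas \ref{SymplecticLineaAlgebra1}, \ref{SymplecticLineaAlgebra2} and \ref{SymplecticLineaAlgebra3}, which organizes the verification of (\ref{iSerreRelationGeo}) into the three cases $\dim(F_1(i)\cap F_2(i))=\dim F_1(i)-k$ for $k=0,1,2$, the extra term $B_j$ on the right-hand side emerging in the case $k=0$ from the count $\dim\mathcal{I}(F_1+F_2)-\dim\mathcal{I}(F_1\cap F_2)=1$ of Lemma \ref{Difference1iSerre}. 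Without these ingredients your plan does not close.
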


We will prove this theorem in the rest of paper. 

\begin{Eg}In the case of the Example \ref{ExampleofsigmaNakQV},
this is the result of \cite{BaoKujwaLiWang} at $q=1$. 
\end{Eg}

\begin{Rmk}
Let us denote $\overline{\mathfrak{B}}_i$ be the variety of pairs $(F_1,F_2)\in \mathfrak{R}(\ww)\times \mathfrak{R}(\ww)$ such that there exists an $F_0\in \mathfrak{L}(\ww)$ with
\begin{equation}
\begin{array}{c}
F_0\subseteq F_1\\
\ddim F_1/F_0=\mathbf{e}_{\sigma(i)}
\end{array}\text{ and }
\begin{array}{c}
F_0\subseteq F_2\\
\ddim F_2/F_0=\mathbf{e}_i.
\end{array}
\end{equation}
Note that $\overline{\mathfrak{B}_i}=\mathfrak{B}_i$ if $\sigma(i)\neq i$, $\overline{\mathfrak{B}_i}=\mathfrak{B}_i\cup \Delta\mathfrak{R}(\ww)$ if $\sigma(i)=i$ where $\Delta$ is the diagonal. 
This also defines a well-defined action of $\mathbf{U}^\iota$ in a similar manner. 
This corresponds to the correspondence used in \cite{BaoKujwaLiWang}. This is because $B_i\longmapsto B_i+\delta_{i=\sigma(i)}$ is an automorphism of $\mathbf{U}^\iota$. 

Note that $\overline{\mathfrak{B}_i}$ is closed, since $\bar{\pi}_{i\bullet}$ is proper by the proof of Lemma \ref{SymplecticLineaAlgebra1} and Corollary \ref{computationoffibre2}. 
Here we take this convention since it is parallel to the geometric realization of Hecke algebras where the correspondences are not closed.
\end{Rmk}

\section{Relations (I)}

In this section, we will prove the relations (\ref{sigmareltaion}),  (\ref{EFrelation}) and (\ref{SerreRelation1}). 
They are parallel to what was done in \cite{Nakajima94}. 
Especially, the proof of the last relations (\ref{SerreRelation1}), known as \emph{Serre relations}, was originally due to Lusztig \cite{Lusztig1991}. 

\begin{Prop}The relations (\ref{sigmareltaion}) and (\ref{Rootreltaion}) hold. 
\end{Prop}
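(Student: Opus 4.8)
The plan is to verify the two displayed families of relations, $(\ref{sigmareltaion})$ and $(\ref{Rootreltaion})$, directly from the definitions in Theorem \ref{MainTh}, since both only involve the ``Cartan part'' of the action and require no analysis of the $\iota$Hecke correspondences. Recall that $h_i$ acts on $\Fun(\mathfrak{R}(\vv,\ww))$ as the scalar $(\ww-C\vv)_i$, so $h_i$ and $h_j$ act as diagonal operators and automatically commute: this gives $[h_i,h_j]=0$ with essentially no work.

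For the relation $h_i+h_{\sigma(i)}=0$, I would argue as follows. On the summand $\Fun(\mathfrak{R}(\vv,\ww))$, the operator $h_i+h_{\sigma(i)}$ acts by the scalar $(\ww-C\vv)_i+(\ww-C\vv)_{\sigma(i)}$. But Corollary \ref{vvforsigmaquivervariety} tells us $\mathfrak{R}(\vv,\ww)=\emptyset$ unless $\sigma(\ww-C\vv)=-(\ww-C\vv')$ where $\vv'=\ddim F^\perp$; in particular, applying the involution argument to a fixed point $F=F^\perp$ of $\sigma$ forces $\vv=\vv'$, hence $\sigma(\ww-C\vv)=-(\ww-C\vv)$, i.e.\ $(\ww-C\vv)_{\sigma(i)}=-(\ww-C\vv)_i$ for every $i$. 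Therefore the scalar vanishes on every nonempty component, and $h_i+h_{\sigma(i)}=0$ as an operator on $\Fun(\mathfrak{R}(\ww))$.

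For $[h_i,B_j]=(c_{ij}-c_{\sigma(i)j})B_j$ I would use the fact that $B_j=\pi_{j1*}\pi_{j2}^*$ shifts the dimension vector in a controlled way: if $F_2\in\mathfrak{R}(\vv,\ww)$ lies in the image of $\pi_{j2}$ and $F_1=\pi_{j1}(F_1,F_2)$, then by the definition of $\mathfrak{B}_j$ we have $\ddim F_1=\vv-\mathbf{e}_{\sigma(j)}+\mathbf{e}_j$ (with the convention that when $\sigma(j)=j$ the two correction terms cancel and the dimension vector is unchanged; this matches Corollary \ref{computationoffibre2}). Consequently $B_j$ maps $\Fun(\mathfrak{R}(\vv,\ww))$ into $\Fun(\mathfrak{R}(\vv-\mathbf{e}_{\sigma(j)}+\mathbf{e}_j,\ww))$. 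On the target component $h_i$ acts by the scalar $(\ww-C(\vv-\mathbf{e}_{\sigma(j)}+\mathbf{e}_j))_i=(\ww-C\vv)_i+c_{i\sigma(j)}-c_{ij}$, and on the source component it acts by $(\ww-C\vv)_i$. Hence $[h_i,B_j]$ acts on $\Fun(\mathfrak{R}(\vv,\ww))$ by the scalar $c_{i\sigma(j)}-c_{ij}$. Using the symmetry $c_{i\sigma(j)}=c_{\sigma(i)j}$ of the Cartan matrix (which holds because $\sigma$ is a diagram automorphism), this equals $c_{\sigma(i)j}-c_{ij}=-(c_{ij}-c_{\sigma(i)j})$; to land on the sign stated in $(\ref{Rootreltaion})$ I would recheck the orientation conventions in the definition of $B_j$ and $h_i$, adjusting by replacing $i\leftrightarrow\sigma(i)$ if needed — this bookkeeping is the only delicate point, and it is purely a matter of tracking which vertex gets the $+\mathbf{e}$ and which gets the $-\mathbf{e}$ in $\ddim F_1$. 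No genuine obstacle arises here; the main care needed is consistency of sign conventions between the dimension-vector shift of $\mathfrak{B}_j$ and the definition of the Cartan elements.
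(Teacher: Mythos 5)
Your approach is the same as the paper's: both relations are checked componentwise using that $h_i$ acts by the scalar $(\ww-C\vv)_i$ on $\Fun(\mathfrak{R}(\vv,\ww))$, with $h_i+h_{\sigma(i)}=0$ coming from Corollary \ref{vvforsigmaquivervariety} applied to a $\sigma$-fixed point (so $\vv'=\vv$), and $[h_i,B_j]$ computed from the dimension shift across $\mathfrak{B}_j$. The one slip is the shift itself: since $\ddim F_1/F_1\cap F_2=\mathbf{e}_{\sigma(j)}$ and $\ddim F_2/F_1\cap F_2=\mathbf{e}_j$, we get $\ddim F_1=\vv+\mathbf{e}_{\sigma(j)}-\mathbf{e}_j$ (not $\vv-\mathbf{e}_{\sigma(j)}+\mathbf{e}_j$), so $[h_i,B_j]$ acts by $\bigl(-C(\mathbf{e}_{\sigma(j)}-\mathbf{e}_j)\bigr)_i=c_{ij}-c_{i\sigma(j)}=c_{ij}-c_{\sigma(i)j}$, and the stated sign comes out directly with no need to adjust conventions or swap $i\leftrightarrow\sigma(i)$.
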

\begin{proof}
The relation (\ref{sigmareltaion}) follows  from Proposition \ref{vvforsigmaquivervariety} that $(\ww-C\vv)+\sigma(\ww-C\vv)=0$. 
For $(F_1,F_2)\in \mathfrak{B}_i$, we have 
$$(\ww-C\ddim F_1)_j-(\ww-C\ddim F_2)_j=(C(\mathbf{e}_i-\mathbf{e}_{\sigma(i)}))_j=c_{ij}-c_{\sigma(i)j}.$$
Thus the relation (\ref{Rootreltaion}) holds.
\end{proof}

\begin{Prop} The relation (\ref{EFrelation}) holds. 
\end{Prop}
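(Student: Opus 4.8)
\textbf{Proof proposal for the relation $[B_i,B_{\sigma(i)}]=h_i$.}

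The plan is to reduce the identity to a fibrewise Euler characteristic computation and then invoke the complex $C_i(F)$ from Proposition \ref{Importantcomplex} together with the duality of Proposition \ref{epsilonphiduality}. Concretely, unwinding the definitions, $B_i$ acts as $\pi_{i1*}\pi_{i2}^*$ and $B_{\sigma(i)}$ as $\pi_{\sigma(i)1*}\pi_{\sigma(i)2}^*$; the commutator $B_iB_{\sigma(i)}-B_{\sigma(i)}B_i$ applied to the characteristic function $\mathbf{1}_{\{F\}}$ of a point $F\in\mathfrak{R}(\vv,\ww)$ should come out to $(\ww-C\vv)_i\cdot\mathbf{1}_{\{F\}}$. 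So the first step is to write out both composites $B_iB_{\sigma(i)}$ and $B_{\sigma(i)}B_i$ as pull-push along the obvious fibre products of $\iota$Hecke correspondences, using the Cartesian-square identity (\ref{Cartesiansquare}) and the projection formula to move everything into a single push-forward to $\mathfrak{R}(\ww)$.

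The key geometric observation, exactly as in \cite{Nakajima94}, is that the two fibre products $\mathfrak{B}_i\times_{\mathfrak{R}(\ww)}\mathfrak{B}_{\sigma(i)}$ (in the two orders) have a large common open piece: the locus where the two ``moves'' happen at genuinely different places, i.e. where the intermediate representation $F_1\cap F_2$ in one composite and the analogous one in the other composite can be matched. On that common locus the two correspondences are canonically isomorphic, so their contributions to $(B_iB_{\sigma(i)}-B_{\sigma(i)}B_i)\varphi$ cancel by the additivity (\ref{additionprinciple}) of $\chi$. What is left is the ``diagonal'' locus, which lands on $F$ itself, and the coefficient there is
\begin{equation*}
\chi(\pi_{i1}^{-1}(F))-\chi(\pi_{\sigma(i)1}^{-1}(F))=\phi_i(F)-\phi_{\sigma(i)}(F),
\end{equation*}
where I use Corollary \ref{computationoffibre1} (or Corollary \ref{computationoffibre2} when $\sigma(i)=i$, in which case both sides vanish consistently with $h_i=0$). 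By Proposition \ref{epsilonphiduality}, $\phi_{\sigma(i)}(F)=\phi_{\sigma(i)}(F^\perp)=\epsilon_i(F)$ since $F=F^\perp$ on $\mathfrak{R}(\ww)$; hence the coefficient is $\phi_i(F)-\epsilon_i(F)=(\ww-C\vv)_i$, which is exactly the action of $h_i$. One must also treat the case $\sigma(i)=i$ separately: there $B_iB_i-B_iB_i=0$ trivially on the nose, and $h_i=H_i-H_i=0$, and indeed $(\ww-C\vv)_i=0$ on $\mathfrak{R}(\vv,\ww)$ by Proposition \ref{vvforsigmaquivervariety}, so the relation is vacuous; alternatively one can still run the fibre-product argument and see the diagonal contribution vanish by Corollary \ref{computationoffibre2}.

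The main obstacle I anticipate is \emph{bookkeeping of the fibre products}: one has to verify carefully that, away from the diagonal, the correspondence $\mathfrak{B}_i\times_{\mathfrak{R}(\ww)}\mathfrak{B}_{\sigma(i)}$ really is isomorphic (as a variety over $\mathfrak{R}(\ww)\times\mathfrak{R}(\ww)$, compatibly with the constructible functions being pushed) to $\mathfrak{B}_{\sigma(i)}\times_{\mathfrak{R}(\ww)}\mathfrak{B}_i$, and that the ``bad'' loci where this fails are precisely the ones contracting to the diagonal. The subtlety special to the $\iota$-setting is that the intermediate object is $F_1\cap F_2$ rather than a genuine subrepresentation of a fixed $F$, so one should use the operations $\wr_i$ and $\wr^i$ (and Lemma \ref{reducetoHeckeCorr}) to translate statements about $\mathfrak{B}_i$ back to statements about the ordinary Hecke correspondence $\mathfrak{P}_i$ over $\mathfrak{L}(\ww)$, where Nakajima's original analysis applies, and then restrict to the $\sigma$-fixed locus. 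Once the geometry of these fibres is pinned down, the Euler-characteristic cancellation and the identification of the leftover coefficient via Propositions \ref{Importantcomplex} and \ref{epsilonphiduality} are routine.
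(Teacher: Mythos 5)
Your proposal follows essentially the same route as the paper: reduce to $\sigma(i)\neq i$, split the fibre product $\mathfrak{B}_i\times_{\mathfrak{R}(\ww)}\mathfrak{B}_{\sigma(i)}$ into the preimage of the diagonal and its complement, cancel the off-diagonal contributions of the two composites via the intermediate representations $F_1\wr^i F_2$ and $F_1\wr_i F_2$, and identify the diagonal coefficient as $\phi_i(F)-\epsilon_i(F)=(\ww-C\vv)_i$ using Corollaries \ref{computationoffibre1}--\ref{computationoffibre2} and Proposition \ref{epsilonphiduality}. The one step you flag as an obstacle but do not carry out is precisely what the paper supplies: off the diagonal each fibre of $f$ is a single point or empty, and nonempty under the same dimension condition in both orders, which gives $f_*\mathbf{1}_{\mathfrak{A}_i}=f_*\mathbf{1}_{\mathfrak{A}_{\sigma(i)}}$.
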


\begin{proof}
It suffices to show when $\sigma(i)\neq i$. 
Let $\mathfrak{B}_i\mathfrak{B}_{\sigma(i)}=
\mathfrak{B}_i\times_{\mathfrak{R}(\ww)}\mathfrak{B}_{\sigma(i)}$ be the fibre product. Consider the following diagram 
$$\xymatrix{
&\mathfrak{B}_i\mathfrak{B}_{\sigma(i)}\ar[dl]_{p_1}\ar[dr]^{p_3}\ar[d]^f\\
\mathfrak{R}(\ww)&\mathfrak{R}(\ww)\times \mathfrak{R}(\ww)\ar[r]_{q_3}\ar[l]^{q_1}&\mathfrak{R}(\ww).}$$
Let $\mathfrak{S}_i$ be the preimage of the diagonal $\Delta\mathfrak{R}(\ww)\subseteq \mathfrak{R}(\ww)\times \mathfrak{R}(\ww)$ under $f$ and $\mathfrak{A}_i$ its complement. 
By (\ref{Cartesiansquare}), and (\ref{additionprinciple})
for any $\varphi\in \Fun(\mathfrak{R}(\ww))$, 
$$B_iB_{\sigma(i)}\varphi = p_{1*}p_3^*\varphi 
= q_{1*}\big(f_*\mathbf{1}_{\mathfrak{S}_i}\cdot q^*_3\varphi\big)+
q_{1*}\big(f_*\mathbf{1}_{\mathfrak{A}_i}\cdot q^*_3\varphi\big).
$$
Note that $\mathfrak{S}_i$ is isomorphic to $\mathfrak{B}_i$, and thus $q_{1*}\big(f_*\mathbf{1}_{\mathfrak{S}_i}\cdot q^*_3\varphi\big)=\pi_{i1*}\pi_{i1}^*\varphi$. 
We will use the same notation $f$ to denote the counterpart of $\sigma(i)$ and we have similarly 
$q_{1*}\big(f_*\mathbf{1}_{\mathfrak{S}_{\sigma(i)}}\cdot q^*_3\varphi\big)=\pi_{i2*}\pi_{i2}^*\varphi$. 
By Corollary \ref{computationoffibre1}, we have 
$$
\pi_{i1*}\pi_{i1}^*\varphi-\pi_{i2*}\pi_{i2}^*\varphi=(\ww-C\vv)_i\varphi=h_i\varphi.
$$
It rests to show $f_*\mathbf{1}_{\mathfrak{A}_i}=f_*\mathbf{1}_{\mathfrak{A}_{\sigma(i)}}$. 
By definition, the fibre of $f$ at $(F_1,F_2)$ when $F_1\neq F_2$ in $\mathfrak{A}_i$ (resp $\mathfrak{A}_{\sigma(i)}$) is the point $F_1\wr^i F_2$ (resp $F_1\wr_iF_2$) if it has correct dimension. 
This shows $f_*\mathbf{1}_{\mathfrak{A}_i}=f_*\mathbf{1}_{\mathfrak{A}_{\sigma(i)}}$. 
\end{proof} 

\begin{Prop}The relation (\ref{SerreRelation1}) holds. 
\end{Prop}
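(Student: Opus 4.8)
We want relation (\ref{SerreRelation1}), i.e.\ $[B_i,B_j]=0$ when $\sigma(i)\neq j$ and $c_{ij}=0$; this is the $\iota$-analogue of the quadratic Serre relation $[E_i,E_j]=0$. The plan is to run the argument used for (\ref{EFrelation}). Set $\mathfrak{B}_i\mathfrak{B}_j=\mathfrak{B}_i\times_{\mathfrak{R}(\ww)}\mathfrak{B}_j$, whose points are the triples $(F_1,F_2,F_3)$ with $(F_1,F_2)\in\mathfrak{B}_i$ and $(F_2,F_3)\in\mathfrak{B}_j$, together with its projections $q_1,q_3$ to $\mathfrak{R}(\ww)$ remembering $F_1$, resp.\ $F_3$; define $\mathfrak{B}_j\mathfrak{B}_i$ likewise. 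By the base change identity (\ref{Cartesiansquare}) and the projection formula, $B_iB_j=q_{1*}q_3^*$ on $\mathfrak{B}_i\mathfrak{B}_j$ and $B_jB_i=q_{1*}q_3^*$ on $\mathfrak{B}_j\mathfrak{B}_i$, so it suffices to identify these two correspondences over $\mathfrak{R}(\ww)\times\mathfrak{R}(\ww)$ compatibly with $(q_1,q_3)$.

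The first step is to reconstruct the middle term. Since $c_{ij}=0$ and $\sigma(i)\neq j$, the two $\sigma$-stable blocks $\{i,\sigma(i)\}$ and $\{j,\sigma(j)\}$ of vertices are disjoint (this includes the case $\sigma(i)=i$, which occurs only for $i=d$). In a triple of $\mathfrak{B}_i\mathfrak{B}_j$ the representation $F_2$ agrees with $F_3$ at every vertex of $\{i,\sigma(i)\}$ — since $(F_2,F_3)\in\mathfrak{B}_j$ changes only the $j$-block — agrees with $F_1$ at every vertex of $\{j,\sigma(j)\}$, and agrees with both elsewhere; hence $F_2$ is pinned down by $(F_1,F_3)$, and conversely one builds a candidate $F_2$ from $(F_1,F_3)$ vertex by vertex. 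Using $F_1=F_1^\perp$, $F_3=F_3^\perp$ and the identity $F(k)^\perp=F(\sigma(k))$ for $F\in\mathfrak{R}(\ww)$, the candidate automatically satisfies $F_2^\perp=F_2$, so it lies in $\mathfrak{R}(\ww)$ as soon as it is a subrepresentation; moreover the membership conditions for $\mathfrak{B}_i$ and $\mathfrak{B}_j$ unwind to numerical conditions on $(F_1,F_3)$ that coincide with those coming from $\mathfrak{B}_j\mathfrak{B}_i$ with the symmetric candidate $F_2'$ (agreeing with $F_1$ on the $i$-block and with $F_3$ on the $j$-block). So each composed correspondence maps isomorphically onto a locally closed subset of $\mathfrak{R}(\ww)\times\mathfrak{R}(\ww)$, and it remains to show that for $(F_1,F_3)$ in the common numerical locus both candidates $F_2$ and $F_2'$ really are subrepresentations; then the two images agree and we are done.

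The second step is the edge-by-edge check. An edge of the quiver whose endpoints both avoid the $i$-block, or both lie in the $i$-block, or both lie in the $j$-block, imposes a condition on $F_2$ (or $F_2'$) copied verbatim from $F_1$ or from $F_3$. The only edges that can obstruct are those joining $\{i,\sigma(i)\}$ to $\{j,\sigma(j)\}$, and in type $A$ these exist precisely when $c_{i\sigma(j)}=-1$, in which case they are the edges $\{i,\sigma(j)\}$ and $\{\sigma(i),j\}$ (never $\{i,j\}$). For the arrow $h\colon i\to\sigma(j)$ the condition on $F_2$ is $h(F_3(i))\subseteq F_1(\sigma(j))$, which holds since $h(F_3(i))\subseteq F_3(\sigma(j))$ ($F_3$ being a subrepresentation) and $F_3(\sigma(j))\subseteq F_1(\sigma(j))$ is one of the numerical inclusions; the arrow $\sigma(j)\to i$ is dealt with the same way using that $F_1$ is a subrepresentation. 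The two remaining cross arrows, along $\{\sigma(i),j\}$, have their inclusions pointing the ``wrong way'' and are deduced from the previous two via the adjointness (\ref{adjointofpairingB}) of the $\sigma$-action together with $F_1=F_1^\perp$, $F_3=F_3^\perp$. The identical checks apply to $F_2'$. When $c_{i\sigma(j)}=0$ there are no cross edges, so both candidates are unconditionally valid — this also covers $\sigma(i)=i$, as $d$ is never adjacent to $\{j,\sigma(j)\}$ when $c_{dj}=0$. Hence $\mathfrak{B}_i\mathfrak{B}_j$ and $\mathfrak{B}_j\mathfrak{B}_i$ are both isomorphic, over $(q_1,q_3)$, to the same subset of $\mathfrak{R}(\ww)\times\mathfrak{R}(\ww)$, so $B_iB_j=B_jB_i$.

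I expect the real obstacle to be the interacting case $c_{i\sigma(j)}=-1$, and within it the cross arrows along $\{\sigma(i),j\}$: there the required inclusion is not visible from any single representation, and one has to go through the symplectic pairing on $K_RW$ — its adjointness under $\sigma$ from Corollary \ref{symplecticformoverKRW} and the self-perpendicularity of points of $\mathfrak{R}(\ww)$. This is exactly where the balance $B_i=E_i+F_{\sigma(i)}$ is needed, and it is the step one expects to break for types other than AIII. A minor point to watch is that $\sigma$ may fix a vertex, so that $\mathfrak{B}_d$ is dimension-preserving with Lagrangian-Grassmannian fibres (Corollary \ref{computationoffibre2}); but then there are no cross edges and the reconstruction above goes through without change.
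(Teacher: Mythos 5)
Your proposal is correct and follows essentially the same route as the paper: both arguments identify $\mathfrak{B}_i\times_{\mathfrak{R}(\ww)}\mathfrak{B}_j$ and $\mathfrak{B}_j\times_{\mathfrak{R}(\ww)}\mathfrak{B}_i$ with the same correspondence (the paper's $\mathfrak{K}$) by reconstructing the unique middle representation and checking that only the cross edges between the $i$-block and the $j$-block could obstruct. One small remark: the detour through the symplectic pairing for the arrows along $\{\sigma(i),j\}$ is unnecessary, since for the arrow $g\colon\sigma(i)\to j$ one has directly $g(F_2(\sigma(i)))=g(F_3(\sigma(i)))\subseteq g(F_1(\sigma(i)))\subseteq F_1(j)=F_2(j)$, using the numerical inclusion first and the subrepresentation property of $F_1$ second — which is what the paper's ``and so on'' covers.
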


\begin{proof}
Let $\mathfrak{K}$ be the variety of pair $(F_1,F_2)\in \mathfrak{R}(\ww)\times \mathfrak{R}(\ww)$ such that 
$$\ddim F_1/F_1\cap F_2=\mathbf{e}_{\sigma(i)}+\mathbf{e}_{\sigma(j)},\text{ and }
\ddim F_2/F_1\cap F_2=\mathbf{e}_i+\mathbf{e}_{j}.$$
We have two morphisms
$$\mathfrak{B}_i\times_{\mathfrak{R}(\ww)}\mathfrak{B}_j\longrightarrow \mathfrak{K}\longleftarrow \mathfrak{B}_j\times_{\mathfrak{R}(\ww)}\mathfrak{B}_i.$$
We will show both of them are isomorphisms for $\sigma(i)\neq j$. 
If $c_{\sigma(i)j}=0$, then $O(i)\cup O(\sigma(i))$ and $O(j)\cup O(\sigma(j))$ has no common edge, thus the above two morphisms are both isomorphisms. 
If $c_{\sigma(i)j}=-1$, $\sigma(j)\neq j$ and $\sigma(i)\neq i$. The relation of $(F_1,F_2)\in \mathfrak{K}$ can be illustrated as follows, 
$$\xymatrix{
\cdots\ar@{=}[d]
\ar@<+0.5ex>[r]&\ar@<+0.5ex>[l]
F_1(i)\ar@{..}[d]|{\cap\shortmid}
\ar@<+0.5ex>[r]&\ar@<+0.5ex>[l]
F_1(\sigma(j))\ar@{..}[d]|{\cup\shortmid}
\ar@<+0.5ex>[r]&\ar@<+0.5ex>[l]
\cdots\ar@{=}[d]
\ar@<+0.5ex>[r]&\ar@<+0.5ex>[l]
F_1(j)\ar@{..}[d]|{\cap\shortmid}
\ar@<+0.5ex>[r]&\ar@<+0.5ex>[l]
F_1(\sigma(i))\ar@{..}[d]|{\cup\shortmid}
\ar@<+0.5ex>[r]&\ar@<+0.5ex>[l]
\cdots\ar@{=}[d]\\
\cdots
\ar@<+0.5ex>[r]&\ar@<+0.5ex>[l]
F_2(i)
\ar@<+0.5ex>[r]&\ar@<+0.5ex>[l]
F_2(\sigma(j))
\ar@<+0.5ex>[r]&\ar@<+0.5ex>[l]
\cdots
\ar@<+0.5ex>[r]&\ar@<+0.5ex>[l]
F_2(j)
\ar@<+0.5ex>[r]&\ar@<+0.5ex>[l]
F_2(\sigma(i))
\ar@<+0.5ex>[r]&\ar@<+0.5ex>[l]
\cdots}$$
It is clear that $h(F_1(i))\subseteq F_2(\sigma(j))$ for $h:i\to \sigma(j)$ and so on. In particular, 
$$\cdots\oplus F_1(i)\oplus F_2(\sigma(j))\oplus \cdots \oplus F_2(j)\oplus F_1(\sigma(i))\oplus \cdots $$
is a well-defined representation which is the unique representation in the fibre of the left morphism. 
The same argument for the right morphism shows the equality. 
\end{proof} 

\begin{Prop}The relation (\ref{SerreRelation2}) holds. 
\end{Prop}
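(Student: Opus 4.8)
The plan is to transport Lusztig's geometric proof of the ordinary Serre relation $[E_i,[E_i,E_j]]=0$ (\cite{Lusztig1991}, used by Nakajima in Theorem \ref{MainThNak94}) to the $\iota$Hecke correspondences. Expanding the bracket, the relation to be proved is the operator identity
$$
(\pi_{i1*}\pi_{i2}^*)^2\,\pi_{j1*}\pi_{j2}^*
-2\,\pi_{i1*}\pi_{i2}^*\,\pi_{j1*}\pi_{j2}^*\,\pi_{i1*}\pi_{i2}^*
+\pi_{j1*}\pi_{j2}^*\,(\pi_{i1*}\pi_{i2}^*)^2=0
$$
on $\Fun(\mathfrak{R}(\ww))$. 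Under the hypotheses $c_{ij}=-1$ and $\sigma(i)\neq i$ one has $j\notin\{i,\sigma(i)\}$ and $\sigma(j)\notin\{i,\sigma(i)\}$, and $O(i)\cap O(\sigma(i))=\emptyset$ (the fact underlying $\wr_i$). When in addition $j\neq d$ the pairs $\{i,j\}$ and $\{\sigma(i),\sigma(j)\}$ span disjoint $A_2$-subquivers of $Q$ with no connecting edge, while for $j=d$ (so $i=d\pm1$ and $\sigma(j)=j$) they span a single $A_3$-subquiver on $\{d-1,d,d+1\}$; the latter is the delicate case.

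First I would use the base-change identity (\ref{Cartesiansquare}) and the projection formula to rewrite each of the three iterated compositions as $p_{1*}\big(m_\bullet\cdot p_2^*(-)\big)$ for a single ``$\iota$Serre correspondence'' $\mathfrak{W}\subseteq\mathfrak{R}(\ww)\times\mathfrak{R}(\ww)$, defined by the evident dimension-vector constraint on $\ddim F/(F\cap F')$ and $\ddim F'/(F\cap F')$, with its two outer projections $p_1,p_2$; here $m_1,m_2,m_3\in\Fun(\mathfrak{W})$ are the functions $(F,F')\mapsto\chi$ of the fibre over $(F,F')$ of the respective ``zigzag'' variety, i.e.\ of the fibre products $\mathfrak{B}_i\times_{\mathfrak{R}(\ww)}\mathfrak{B}_i\times_{\mathfrak{R}(\ww)}\mathfrak{B}_j$ and its two reorderings. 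Since $\sigma(i)\neq i$, each $\mathfrak{B}_i$-step lowers the dimension at $i$ and raises it at $\sigma(i)$, and among the steps $\{\mathfrak{B}_i,\mathfrak{B}_j\}$ none lowers at $\sigma(i)$; consequently no zigzag can ``undo'' a step, the three compositions share the same $p_1,p_2$, and the relation reduces to the pointwise identity $m_1-2m_2+m_3=0$ on $\mathfrak{W}$.

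Finally I would prove this pointwise identity. Fixing $(F,F')\in\mathfrak{W}$, every intermediate object of a zigzag lies in $\mathfrak{R}(\ww)$ and hence coincides with its annihilator, so its vertex-$\{\sigma(i),\sigma(j)\}$ part is the annihilator of its vertex-$\{i,j\}$ part; by Lemma \ref{reducetoHeckeCorr} and the $\wr_i$-description, a zigzag is then determined by a chain of ordinary Hecke steps at the vertices $\{i,j\}$ inside $\mathfrak{L}(\ww)$. When $j\neq d$ this identifies the fibre over $(F,F')$ of each zigzag variety with a variety of flags of the prescribed type in a length-three representation of the $A_2$-subquiver on $\{i,j\}$ — exactly the object entering Lusztig's count — so $m_1-2m_2+m_3=0$ becomes the combinatorial identity Lusztig established for $[E_i,[E_i,E_j]]$. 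The main obstacle is the case $j=d$: there $\{d-1,d,d+1\}$ form a connected $A_3$, the generator $B_d$ is governed by Lagrangian subspaces rather than flags (Corollary \ref{computationoffibre2}), and a zigzag no longer factors through the $\{i,j\}$-subquiver. I would handle it by using the fibre description in the proof of Corollary \ref{computationoffibre2} to trade the Lagrangian condition at $d$ for a codimension-one flag condition, reducing the fibre again to a flag variety for an $A_2$-subquiver, and then check directly that the resulting fibrewise Euler characteristics still satisfy $m_1-2m_2+m_3=0$.
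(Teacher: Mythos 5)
Your strategy is essentially the paper's: both reduce, via (\ref{Cartesiansquare}) and the projection formula, to the pointwise identity $f_*\mathbf{1}_{\mathfrak{B}_i\mathfrak{B}_i\mathfrak{B}_j}-2f_*\mathbf{1}_{\mathfrak{B}_i\mathfrak{B}_j\mathfrak{B}_i}+f_*\mathbf{1}_{\mathfrak{B}_j\mathfrak{B}_i\mathfrak{B}_i}=0$ on the common support and then run Lusztig's fibre count. The emphasis, however, is reversed: the paper writes out in full exactly the case $\sigma(j)=j$ that you single out as the main obstacle, and leaves $\sigma(j)\neq j$ (your disjoint-$A_2$ case) to the reader. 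Moreover, your diagnosis of the $j=d$ case is off in a way worth correcting: in any zigzag contributing to one of the three compositions, the $d$-components of the intermediate objects are \emph{forced} to be $F_1(d)$ and $F_2(d)$ (a $\mathfrak{B}_i$-step does not touch vertex $d$ since $d\notin\{i,\sigma(i)\}$, and the endpoints' $d$-components are already Lagrangian with intersection of codimension one), so no Lagrangian Grassmannian ever appears in a fibre and no ``trading'' is needed. The only free datum is the subspace $X=G(i)$ with $F_1(i)\subseteq X\subseteq F_2(i)$ of intermediate dimension, the $\sigma(i)$-component being determined as $X^\perp$; the three fibres are then cut out by precisely the incidence conditions $h(F_2(i))\subseteq F_1(j)$, resp.\ $h(X)\subseteq F_1(j)\cap F_2(j)$ and $\bar{h}(F_1(j)+F_2(j))\subseteq X$, resp.\ $\bar{h}(F_2(j))\subseteq F_1(i)$, and the identity follows from Lusztig's four-case analysis of these. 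That case analysis is the actual content of the proof; your proposal defers it to ``check directly,'' but since it is the same classical count you already invoke for $j\neq d$, the argument closes.
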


\begin{proof}
We will show the case $\sigma(j)=j$, and the case $\sigma(j)\neq j$ will be left to readers.
Construct the following diagram 
$$\xymatrix{
&\mathfrak{B}_i\mathfrak{B}_i\mathfrak{B}_j\ar[d]_{f}\ar[dr]\ar[dl]\\
\mathfrak{R}(\ww)&\mathfrak{R}(\ww)\times \mathfrak{R}(\ww)\ar[r]_{q_2}\ar[l]^{q_1}&\mathfrak{R}(\ww),}$$
where 
$$\mathfrak{B}_i\mathfrak{B}_i\mathfrak{B}_j=\mathfrak{B}_i\times_{\mathfrak{R}(\ww)}\mathfrak{B}_i\times_{\mathfrak{R}(\ww)}\mathfrak{B}_j.$$
By (\ref{Cartesiansquare}), for any $\varphi\in \Fun(\mathfrak{R}(\ww))$, 
$$B_iB_iB_j=q_{1*}(f_*\mathbf{1}_{\mathfrak{B}_i\mathfrak{B}_i\mathfrak{B}_j}\cdot q_2^*\varphi).$$
We will abuse of notation to denote $f$ the counterpart of $\mathfrak{B}_i\mathfrak{B}_j\mathfrak{B}_i$ or 
$\mathfrak{B}_j\mathfrak{B}_i\mathfrak{B}_i$. 
We will show that 
\begin{equation}\label{SerreRelations}
f_*\mathbf{1}_{\mathfrak{B}_i\mathfrak{B}_i\mathfrak{B}_j}-2f_*\mathbf{1}_{\mathfrak{B}_i\mathfrak{B}_j\mathfrak{B}_i}+
f_*\mathbf{1}_{\mathfrak{B}_j\mathfrak{B}_i\mathfrak{B}_i}=0.
\end{equation}
Let $h$ the arrow from $i$ to $j$. 
For $(F_1,F_2)\in \mathfrak{R}(\ww)\times \mathfrak{R}(\ww)$ with 
$$\dim F_1/F_1\cap F_2=2\mathbf{e}_{\sigma(i)}+\mathbf{e}_j,\text{ and }
\dim F_2/F_1\cap F_2=2\mathbf{e}_{i}+\mathbf{e}_j,$$
we will compute (\ref{SerreRelations}). We can illustrate the relation of $(F_1,F_2)$ as follows
$$\xymatrix{
\cdots\ar@{=}[d]
\ar@<+0.5ex>[r]&\ar@<+0.5ex>[l]
F_1(i) \ar@{..}[d]|{\cap\shortmid}
\ar@<+0.5ex>[r]^h&\ar@<+0.5ex>[l]^{\bar{h}}
F_1(j) \ar@{..}[d]
\ar@<+0.5ex>[r]&\ar@<+0.5ex>[l]
F_1(\sigma(i)) \ar@{..}[d]|{\cup\shortmid}
\ar@<+0.5ex>[r]&\ar@<+0.5ex>[l]
\cdots\ar@{=}[d]\\
\cdots
\ar@<+0.5ex>[r]&\ar@<+0.5ex>[l]
F_2(i) 
\ar@<+0.5ex>[r]^h&\ar@<+0.5ex>[l]^{\bar{h}}
F_2(j)
\ar@<+0.5ex>[r]&\ar@<+0.5ex>[l]
F_2(\sigma(i))
\ar@<+0.5ex>[r]&\ar@<+0.5ex>[l]
\cdots\\
}$$
We have 
\begin{itemize}
\item The fibre of $f$ in $\mathfrak{B}_j\mathfrak{B}_i\mathfrak{B}_i$
is exactly the variety of subspaces $X$ of correct dimension between $F_1(i)$ and $F_2(i)$ if $\bar{h}(F_2(j))\subseteq F_1(i)$, otherwise, it is empty. 
\item The fibre of $f$ in 
$\mathfrak{B}_i\mathfrak{B}_j\mathfrak{B}_i$
is exactly the variety of subspaces $X$ of correct dimension between $F_1(i)$ and $F_2(i)$ of correct dimension such that $$h(X)\subseteq F_1(j)\cap F_2(j),\qquad 
\bar{h}(F_1(j)+F_2(j))\subseteq X. $$
\item The fibre of $f$ in 
$\mathfrak{B}_i\mathfrak{B}_i\mathfrak{B}_j$
is exactly the variety of subspaces $X$ of correct dimension between $F_1(i)$ and $F_2(i)$ if $h(F_2(i))\subseteq F_1(j)$, otherwise, it is empty.
\end{itemize}
There are four cases. 
\begin{itemize}
\item 
If $h(F_2(i))\subseteq F_1(j)$ and $\bar{h}(F_2(j))\subseteq F_1(i)$, the three fibres are all $\mathbb{C}P^1$.
In particular, (\ref{SerreRelations}) holds. 
\item 
If $h(F_2(i))\not\subseteq F_1(j)$ and $\bar{h}(F_2(j))\subseteq F_1(i)$. 
Then the fibre of $\mathfrak{B}_i\mathfrak{B}_j\mathfrak{B}_i$, has only one point, i.e. the subspace
$$h^{-1}(F_1(j)\cap F_2(j))\cap F_2(i). $$
To prove this, it suffices to show it has a correct dimension.
Otherwise, $h^{-1}(F_1(j))\cap F_2(i)=F_1(i)$. 
Actually, we have an injection induced by $h$
$$\dfrac{F_2(i)}{h^{-1}(F_1(j)\cap F_2(j))\cap F_2(i)}\longrightarrow \dfrac{F_2(j)}{F_1(j)\cap F_2(j)}.$$
This is a contradiction. 
As a result, the fibres are $\mathbb{C}P^1$, $\mathsf{pt}$ and empty respectively. 
\item If $h(F_2(i))\subseteq F_1(j)$ and $\bar{h}(F_2(j))\not\subseteq F_1(i)$. 
Then the fibre of $\mathfrak{B}_i\mathfrak{B}_j\mathfrak{B}_i$, has only one point, i.e. the subspace
$$\bar{h}(F_1(j)+ F_2(j))+ F_1(i). $$
It suffices to show it has a correct dimension.
Otherwise, $\bar{h}(F_1(j)+ F_2(j))+ F_1(i) = F_2(i)$. 
Actually, we have a surjection induced by $\bar{h}$ 
$$\dfrac{F_2(j)}{F_1(j)\cap F_2(j)}=\dfrac{F_1(j)+F_2(j)}{F_1(j)}\longrightarrow \frac{\bar{h}(F_1(j)+ F_2(j))+ F_1(i)}{F_1(i)}. $$
The is a contradiction. 
As a result, the fibres are empty, $\mathsf{pt}$ and $\mathbb{C}P^1$ respectively. 
\item If $h(F_2(i))\not\subseteq F_1(j)$ and $\bar{h}(F_2(j))\not\subseteq F_1(i)$. 
From the discussion above, if the fibre of  $\mathfrak{B}_i\mathfrak{B}_j\mathfrak{B}_i$ is non-empty, then it is 
$$h^{-1}(F_1(j)\cap F_2(j))\cap F_2(i)=\bar{h}(F_1(j)+ F_2(j))+ F_1(i). $$
Note that we have the following complex (see \cite{Lusztig1991})
$$\dfrac{F_2(i)}{h^{-1}(F_1(j)\cap F_2(j))\cap F_2(i)}\longrightarrow \dfrac{F_2(j)}{F_1(j)\cap F_2(j)}\longrightarrow \frac{\bar{h}(F_1(j)+ F_2(j))+ F_1(i)}{F_1(i)}$$
which is injective on the left and surjective on the right. 
But by dimension reason, it is impossible. 
As a result, the fibres are all empty. 
\end{itemize}
In particular, the relation (\ref{iSerreRelationGeo}) holds. 
\end{proof} 


\section{Relations (II)}

In this section, we will show the relation (\ref{iSerreRelation}) which is known as \emph{$\iota$Serre relations}. 
Assume $\sigma(i)=i$ and $c_{ij}=-1$ in this section. 
For any $F\in \mathfrak{L}(\vv,\ww)$, denote 
\begin{equation}
\mathcal{I}(F)=\sum_{h':j'\to i} h'(F(j')).
\end{equation}
Note that $\dim \mathcal{I}(F)= \vv_i-\epsilon_i(F)$ by definition. 

\begin{Lemma}\label{Difference1iSerre}
If $(F_1,F_2)\in \mathfrak{B}_j$, then 
$$\dim \mathcal{I}(F_1+ F_2)-\dim \mathcal{I}(F_1\cap F_2)=1.$$
In particular $\mathcal{I}(F_1)\subseteq \mathcal{I}(F_2)$ or 
$\mathcal{I}(F_2)\subseteq \mathcal{I}(F_1)$. 
\end{Lemma}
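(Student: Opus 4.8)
The plan is to reduce the statement to a local computation around the vertex $j$ and the vertices $j'$ adjacent to $i$ (which in type AIII, with $\sigma(i)=i$, means $j$ and its mirror $\sigma(j)$ are the only neighbours of $i$, and $c_{ij}=-1$ forces $j$ to be one of them). Since $(F_1,F_2)\in\mathfrak{B}_j$, the two representations differ only at $j$ and $\sigma(j)$: we have $F_1(j)/(F_1(j)\cap F_2(j))\cong\mathbf{e}_{\sigma(j)}$... more precisely $\ddim F_1/(F_1\cap F_2)=\mathbf{e}_{\sigma(j)}$ and $\ddim F_2/(F_1\cap F_2)=\mathbf{e}_j$, so $F_1$ and $F_2$ agree at every vertex except $j$ and $\sigma(j)$, where each of the four spaces $F_k(j),F_k(\sigma(j))$ differs from $(F_1\cap F_2)(j),(F_1\cap F_2)(\sigma(j))$ by at most one dimension. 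The key observation is that $\mathcal{I}(F)=\sum_{h':j'\to i}h'(F(j'))$ only sees the components $F(j)$ and $F(\sigma(j))$, and of these only $F(j)$ can change dimension upward as we pass from $F_1\cap F_2$ to $F_1+F_2$ (at $j$), while at $\sigma(j)$ the relevant space enlarges for the other representation.

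First I would record that $\ddim(F_1+F_2)=\ddim F_1+\ddim F_2-\ddim(F_1\cap F_2)$, so $(F_1+F_2)(j)=F_1(j)+F_2(j)$ has dimension one greater than $(F_1\cap F_2)(j)$, and similarly at $\sigma(j)$; at all other vertices $F_1+F_2$, $F_1\cap F_2$, $F_1$, $F_2$ coincide. Then $\mathcal{I}(F_1+F_2)=h_j\bigl((F_1+F_2)(j)\bigr)+h_{\sigma(j)}\bigl((F_1+F_2)(\sigma(j))\bigr)$ and likewise for $F_1\cap F_2$, where I write $h_j:j\to i$ for the arrow. The difference $\dim\mathcal{I}(F_1+F_2)-\dim\mathcal{I}(F_1\cap F_2)$ is therefore at most $2$ and at least $0$. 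To pin it to exactly $1$, I would use Corollary \ref{vvforsigmaquivervariety}: since both $F_1,F_2\in\mathfrak{R}(\ww)$ we have $(\ww-C\vv_k)_i=0$ for $k=1,2$, hence $\epsilon_i(F_k)=\phi_i(F_k)$, and more to the point $\dim\mathcal{I}(F_k)=(\vv_k)_i-\epsilon_i(F_k)$ is controlled; combined with $(\ww-C\ddim F_1)_j-(\ww-C\ddim F_2)_j = c_{jj}-c_{\sigma(j)j} $ type bookkeeping at the vertex $i$ (where $(C\mathbf{e}_j)_i=c_{ij}=-1$ and $(C\mathbf{e}_{\sigma(j)})_i=c_{i\sigma(j)}$ vanishes since $\sigma(j)$ is the far neighbour only when $d>1$, and equals $-1$ coinciding with $j$ when $d=1$), the dimension $\dim\mathcal{I}$ shifts by exactly $1$ between the two. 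The cleanest route: $\dim\mathcal{I}(F_1+F_2)+\dim\mathcal{I}(F_1\cap F_2)=\dim\mathcal{I}(F_1)+\dim\mathcal{I}(F_2)$ would be false in general, so instead I compare via the complex $C_i$ of Proposition \ref{Importantcomplex} applied to $F_1\cap F_2$, $F_1$, $F_2$, $F_1+F_2$, using $\epsilon_i=\dim H^1(C_i)$ and the fact that the middle term of $C_i$ changes by exactly $c_{ij}\cdot(\text{change at }j)$.

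For the "in particular" clause: once $\dim\mathcal{I}(F_1+F_2)=\dim\mathcal{I}(F_1\cap F_2)+1$, and since $\mathcal{I}(F_1\cap F_2)\subseteq\mathcal{I}(F_k)\subseteq\mathcal{I}(F_1+F_2)$ for $k=1,2$ (monotonicity of $\mathcal{I}$ in the subrepresentation, clear from the definition as a sum of images), each $\mathcal{I}(F_k)$ sits between two subspaces differing by one dimension, so $\dim\mathcal{I}(F_k)\in\{\dim\mathcal{I}(F_1\cap F_2),\dim\mathcal{I}(F_1\cap F_2)+1\}$ for each $k$; if both have the larger dimension then $\mathcal{I}(F_1)=\mathcal{I}(F_2)=\mathcal{I}(F_1+F_2)$, if both have the smaller then $\mathcal{I}(F_1)=\mathcal{I}(F_2)=\mathcal{I}(F_1\cap F_2)$, and if they differ then one contains the other. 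In every case $\mathcal{I}(F_1)\subseteq\mathcal{I}(F_2)$ or $\mathcal{I}(F_2)\subseteq\mathcal{I}(F_1)$.

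The main obstacle I anticipate is the sign/dimension bookkeeping at the vertex $i$ that forces the jump to be exactly $1$ rather than $0$ or $2$ — in particular ruling out the degenerate possibility that $h_j$ kills the extra dimension of $(F_1+F_2)(j)$, i.e. that $h_j\bigl((F_1+F_2)(j)\bigr)=h_j\bigl((F_1\cap F_2)(j)\bigr)$. Here I would argue that if that happened, $\ker h_j$ would meet $(F_1+F_2)(j)$ in something strictly larger than it meets $(F_1\cap F_2)(j)$, and then the commutative relation (\ref{commutativeRelation}) at $i$ (which ties $h_j$ to $\bar h_j$ and to the arrow towards $\sigma(j)$) together with stability of $F_1$ or $F_2$ inside $K_RW$ would be violated; making this precise is the technical heart of the argument, and I expect the author to do it by a direct examination of the complex $C_i$ evaluated on $F_1\cap F_2$ versus $F_1+F_2$.
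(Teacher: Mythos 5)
Your reduction is on the right track --- you correctly observe that $\mathcal{I}$ only sees the neighbours $j,\sigma(j)$ of $i$, that $\mathcal{I}(F_1\cap F_2)\subseteq\mathcal{I}(F_k)\subseteq\mathcal{I}(F_1+F_2)$ with total jump between $0$ and $2$, and your treatment of the ``in particular'' clause is exactly right. But the core claim, that the jump is exactly $1$, is never actually proved: you list the relevant ingredients ($\dim\mathcal{I}(F)=\vv_i-\epsilon_i(F)$, Corollary \ref{vvforsigmaquivervariety}, the complex $C_i$) and then explicitly defer ``the technical heart'' to an unspecified argument via stability and the commutative relations. That fallback would not work as stated: for a general pair in $\mathfrak{L}(\ww)$ related by a Hecke-type correspondence at $j$ the jump can be $0$ or $2$, so the statement genuinely needs the $\sigma$-fixedness of $F_1$ and $F_2$, not just stability.

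The missing idea is the duality of Proposition \ref{epsilonphiduality} combined with the identity $(F_1\cap F_2)^\perp=F_1^\perp+F_2^\perp=F_1+F_2$ (valid because $F_k^\perp=F_k$ for points of $\mathfrak{R}(\ww)$). Since the $i$-components of $\ddim(F_1+F_2)$ and $\ddim(F_1\cap F_2)$ agree, your own formula gives
\begin{equation*}
\dim\mathcal{I}(F_1+F_2)-\dim\mathcal{I}(F_1\cap F_2)=\epsilon_i(F_1\cap F_2)-\epsilon_i(F_1+F_2)=\phi_i(F_1+F_2)-\epsilon_i(F_1+F_2)=(\ww-C\ddim(F_1+F_2))_i,
\end{equation*}
where the middle equality is $\epsilon_i(F_1\cap F_2)=\phi_{\sigma(i)}\bigl((F_1\cap F_2)^\perp\bigr)=\phi_i(F_1+F_2)$. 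Finally $\ddim(F_1+F_2)=\ddim F_1+\mathbf{e}_j$, so this equals $(\ww-C\ddim F_1)_i-c_{ij}=0+1=1$ by Corollary \ref{vvforsigmaquivervariety}. This single duality step is what your proposal is missing, and without it the argument does not close.
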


\begin{proof}
By Proposition \ref{epsilonphiduality}, 
$$\begin{array}{rl}
\dim \mathcal{I}(F_1+ F_2)-\dim \mathcal{I}(F_1\cap F_2)
& = \epsilon_i(F_1\cap F_2)-\epsilon_i(F_1+F_2)\\
& = \phi_i(F_1+F_2)-\epsilon_i(F_1+F_2)\\
& = (\ww-C\ddim (F_1+F_2))_i=-C\mathbf{e}_j=1
\end{array}$$
The equality of the last row is due to Corollary \ref{vvforsigmaquivervariety}. 
\end{proof} 


We can say more about the relations of $\mathcal{I}(F_2)$ and $\mathcal{I}(F_1)$ where we essentially use the symplectic form in Corollary \ref{symplecticformoverKRW}. 

\begin{Lemma} \label{specialintypeA}
If $(F_1,F_2)\in \mathfrak{B}_j$, then $\mathcal{I}(F_1)\neq \mathcal{I}(F_2)$ and $\mathcal{I}(F_1)\subsetneq \mathcal{I}(F_2)$ or $\mathcal{I}(F_2)\subsetneq \mathcal{I}(F_1)$. In particular, 
$$\mathcal{I}(F_1)\cap \mathcal{I}(F_2)=\mathcal{I}(F_1\cap F_2).$$
\end{Lemma}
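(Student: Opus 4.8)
The plan is to upgrade Lemma \ref{Difference1iSerre} from a statement about dimensions of $\mathcal{I}(F_1\cap F_2)$ and $\mathcal{I}(F_1+F_2)$ to a statement about $\mathcal{I}(F_1)$ and $\mathcal{I}(F_2)$ themselves, using the symplectic pairing from Corollary \ref{symplecticformoverKRW} to rule out the ``bad'' coincidence. First I would record the general inclusions $\mathcal{I}(F_1\cap F_2)\subseteq \mathcal{I}(F_1)\cap \mathcal{I}(F_2)$ and $\mathcal{I}(F_1)+\mathcal{I}(F_2)\subseteq \mathcal{I}(F_1+F_2)$, valid because $\mathcal{I}(-)$ is built from images of the maps $h':F(j')\to F(i)$ which are monotone in the subrepresentation. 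By Lemma \ref{Difference1iSerre} the outer two spaces differ in dimension by exactly $1$, so $\dim\mathcal{I}(F_1)\cap\mathcal{I}(F_2)$ and $\dim\mathcal{I}(F_1)+\mathcal{I}(F_2)$ differ by at most $1$; since $F_1\neq F_2$ (as $\ddim F_1\neq\ddim F_2$ in $\mathfrak{B}_j$), if I can show $\mathcal{I}(F_1)\neq\mathcal{I}(F_2)$ then the two spaces differ by exactly $1$ and one of $\mathcal{I}(F_1)\subsetneq\mathcal{I}(F_2)$, $\mathcal{I}(F_2)\subsetneq\mathcal{I}(F_1)$ must hold, which forces $\mathcal{I}(F_1)\cap\mathcal{I}(F_2)=\mathcal{I}(F_1\cap F_2)$ by squeezing dimensions.

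So everything reduces to the single claim $\mathcal{I}(F_1)\neq\mathcal{I}(F_2)$, and this is where I expect the real work to be. The idea is that, because $\sigma(i)=i$, the space $K_RW(i)$ carries a symplectic form $\omega$, and the compatibility $\omega(h(x),y)=\omega(x,\sigma(\bar h)y)$ from Corollary \ref{symplecticformoverKRW} identifies $\mathcal{I}(F)=\sum_{h':j'\to i}h'(F(j'))$ with (the orthogonal complement of) the data defining $\phi_i,\epsilon_i$; concretely, as in the proof of Corollary \ref{computationoffibre2}, $F(i)$ is a Lagrangian subspace of $K_RW(i)$ containing $\mathcal{I}(F)$, and $\mathcal{I}(F)^{\perp_\omega}$ is the relevant subspace on the other side. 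Since $F_1,F_2\in\mathfrak{R}(\ww)$ are fixed by $\sigma$, both $F_1(i)$ and $F_2(i)$ are Lagrangian, $\mathcal{I}(F_\bullet)\subseteq F_\bullet(i)$, and $\dim\mathcal{I}(F_\bullet)=\vv_i-\epsilon_i(F_\bullet)$ with $\epsilon_i(F_\bullet)=\phi_i(F_\bullet)$. Suppose for contradiction $\mathcal{I}(F_1)=\mathcal{I}(F_2)=:\mathcal{I}$. Then both $F_1(i)$ and $F_2(i)$ are Lagrangians containing $\mathcal{I}$ and contained in $\mathcal{I}^{\perp_\omega}$, and since $(F_1,F_2)\in\mathfrak{B}_j$ agree away from $O(j)$ while $i$ is a neighbour of $j$, one checks $F_1(i)$ and $F_2(i)$ are forced to coincide (the only freedom in passing between $F_1$ and $F_2$ is at the vertex $j$, and it cannot change $F(i)$ once $\mathcal{I}(F)$ is fixed and $F(i)$ is determined as a Lagrangian squeezed between $\mathcal{I}$ and $\mathcal{I}^{\perp_\omega}$ of the correct codimension). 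Carrying this out, $F_1(i)=F_2(i)$, and then comparing $\ddim F_1$ and $\ddim F_2$ at the remaining vertices shows $F_1=F_2$, contradicting $(F_1,F_2)\in\mathfrak{B}_j$.

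The main obstacle, then, is the last step: turning ``$\mathcal{I}(F_1)=\mathcal{I}(F_2)$'' into ``$F_1(i)=F_2(i)$'' rigorously. The subtlety is that a priori there is a $\mathbb{P}^1$ worth of Lagrangians between a codimension-one subspace of a symplectic space and its orthogonal, so merely knowing $\mathcal{I}$ does not pin down $F_\bullet(i)$ by pure linear algebra; one must genuinely exploit that $F_1$ and $F_2$ differ only in the $O(j)$-data and that the representation axioms (the maps $i\to j$ and $\sigma(\bar h):j\to i$, together with the commutative relations \eqref{commutativeRelation}) link $F_\bullet(i)$ to $F_\bullet(j)$ and to the common data. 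I would phrase this by noting $h^{-1}(F_\bullet(j))\cap(\text{everything fixed})$, together with the Lagrangian and dimension constraints, leaves at most one choice; if a clean argument is elusive, the fallback is to invoke Theorem \ref{sigmacoincides} and reduce to the explicit type-A combinatorics of the unique path $[d_i]$, where $\mathcal{I}(F)$ and $F(i)$ become flags and the claim is a direct check. Once $\mathcal{I}(F_1)\neq\mathcal{I}(F_2)$ is established, the rest of the lemma, as noted above, is an immediate dimension count combined with Lemma \ref{Difference1iSerre}.
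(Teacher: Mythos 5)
Your reduction of the lemma to the single claim $\mathcal{I}(F_1)\neq\mathcal{I}(F_2)$ is correct and matches the paper's structure: by Lemma \ref{Difference1iSerre} both $\mathcal{I}(F_1)$ and $\mathcal{I}(F_2)$ are squeezed between $\mathcal{I}(F_1\cap F_2)$ and $\mathcal{I}(F_1+F_2)$, which differ in dimension by one, so once they are distinct the strict inclusion and the intersection formula follow by counting. But your mechanism for proving that claim has a fatal flaw. You propose to derive from $\mathcal{I}(F_1)=\mathcal{I}(F_2)$ that $F_1(i)=F_2(i)$ and hence $F_1=F_2$, contradicting $(F_1,F_2)\in\mathfrak{B}_j$. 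In fact $F_1(i)=F_2(i)$ holds automatically for \emph{every} pair in $\mathfrak{B}_j$: by definition $F_1$ and $F_2$ differ from $F_1\cap F_2$ only at the vertices $\sigma(j)$ and $j$ respectively, and $i\notin\{j,\sigma(j)\}$ (here $\sigma(i)=i$ and $c_{ij}=-1$ force $j\neq\sigma(j)$). So there is no contradiction to extract down this road: $F_1$ and $F_2$ genuinely differ at $j$ and $\sigma(j)$ (namely $F_1(j)\subsetneq F_2(j)$ and $F_2(\sigma(j))\subsetneq F_1(\sigma(j))$), and ``comparing dimension vectors at the remaining vertices'' cannot yield $F_1=F_2$. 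Your fallback (reduce to explicit type-$A$ combinatorics via Theorem \ref{sigmacoincides}) is not carried out and does not fill the gap.

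The paper's actual argument is different and is the real content of the lemma. Assume $\mathcal{I}(F_1)=\mathcal{I}(F_2)$; then both equal $A=\mathcal{I}(F_1+F_2)$, and $B=\mathcal{I}(F_1\cap F_2)$ has codimension one in $A$. Pick $x\in B^{\perp}\setminus A^{\perp}$ inside $K_RW(i)$ and set $u=h(x)\in K_RW(j)$, $v=\sigma(h)(x)\in K_RW(\sigma(j))$, where $h:i\to j$. Transporting the hypotheses through the adjunction $\omega(h(x),y)=\omega(x,\sigma(\bar h)y)$ gives $u\in F_2(j)\setminus F_1(j)$ and $v\in F_1(\sigma(j))\setminus F_2(\sigma(j))$; the Lagrangian condition on $F_2$ then forces $\omega(u,v)\neq 0$, while the same adjunction together with the commutativity relation (\ref{commutativeRelation}) at the $\sigma$-fixed vertex $i$ (which identifies $\sigma(\bar h h)$ with $\bar h h$) shows $\omega(u,v)=\omega(v,u)$, hence $\omega(u,v)=0$ by antisymmetry --- a contradiction. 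You correctly sensed that the symplectic form and the relations linking $F(i)$ to $F(j)$ must be exploited, but the computation that makes this work is missing from your proposal, and the route you sketch instead cannot be completed.
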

\begin{proof}
Otherwise $\mathcal{I}(F_1)=\mathcal{I}(F_2)=\mathcal{I}(F_1+F_2)$. Let $A=\mathcal{I}(F_1+F_2)$, and $B=\mathcal{I}(F_1\cap F_2)$. 
Let $V_\bullet=F_\bullet(j)\oplus F_{\bullet}(\sigma(j))$ for $\bullet=1,2$. Let $h$ be the arrow $i\to j$. 
Denote $g:V_\bullet\to F_\bullet(i)$ the sum of $\bar{h}$ and $\sigma(\bar{h})$. 
By definition, 
$$g(V_1+V_2)\subseteq A,\quad g(V_\bullet)\not\subseteq B, \quad
g(V_1\cap V_2)\subseteq B.$$
Therefore, 
$$\bar{g}(A^\perp)\subseteq V_1\cap V_2,\quad 
\bar{g}(B^\perp) \not\subseteq V_\bullet,\quad
\bar{g}(B^\perp) \subseteq V_1+ V_2.$$
Pick any $x\in B^\perp\setminus A^\perp$. 
By definition 
$$\bar{g}(x)\in V_1+V_2,\quad \bar{g}(x)\notin V_\bullet.$$
Note that $\bar{g}(x)=(h(x),\sigma(h)(x))\in K_RW(j)\oplus K_RW(\sigma(j))$. 
Denote $(u,v)=\bar{g}(x)=(h(x),\sigma(h)(x))\in K_RW(j)\oplus K_RW(\sigma(j))$. 
The condition is equivalent to say 
$$u\in F_2(j),v\in F_1(\sigma(j)) \text{ and } u\notin F_1(j),v\notin F_2(j).$$
In particular $F_1(j)+\mathbb{C}u=F_2(j)$ and $F_2(\sigma(j))+\mathbb{C}v=F_1(j)$. 
In particular, $\omega(u,v)\neq 0$. 
But 
$$\omega(u,v)=\omega(h(x),\sigma(h)(x))=\omega(x,\sigma(\bar{h}h)x)=\omega(x,\bar{h}hx)=\omega(\sigma(h)(x),h(x))=\omega(v,u)$$
a contradiction. 
\end{proof} 

\begin{Eg}We can examine the examples before. 
\begin{itemize}
\item In Example \ref{ExampleofsigmaNakQV}, 
$\mathcal{I}$ is exactly $V_{n/2-1}$. 
\item In Example \ref{iSmallEgA}, consider the cases of $121$ and $022$, we see that $\mathcal{I}=V_1$ and $\mathcal{I}=W$ respectively. 
\item In Example \ref{iSmallEgB}, we see $\dim \mathcal{I}=1$ for $\vv=321$. For $\vv=222$, if $F$ corresponds to a point over Langrangian Grassmannian, then $\mathcal{I}=0$. 
If $F$ corresponds a point out of it, then  $\operatorname{pr}_1(V_1)\neq 0$ and similarly
$\operatorname{pr}_2(V_1^\perp)\neq 0$. 
So $\dim \mathcal{I}=2$. 
\end{itemize}
As a result, we cannot predict which $\mathcal{I}(F_\bullet)$ is bigger for $(F_1,F_2)\in \mathfrak{B}_j$ from the dimension vector. 
\end{Eg}

Construct the following diagram 
$$\xymatrix{
&\mathfrak{B}_i\mathfrak{B}_i\mathfrak{B}_j\ar[d]_{f}\ar[dr]\ar[dl]\\
\mathfrak{R}(\ww)&\mathfrak{R}(\ww)\times \mathfrak{R}(\ww)\ar[r]_{q_2}\ar[l]^{q_1}&\mathfrak{R}(\ww),}$$
where 
\begin{equation}
\mathfrak{B}_i\mathfrak{B}_i\mathfrak{B}_j
=\mathfrak{B}_i\times_{\mathfrak{R}(\ww)}\mathfrak{B}_i\times_{\mathfrak{R}(\ww)}\mathfrak{B}_j.
\end{equation}
We will abuse of notation to denote $f$ the counterpart of $\mathfrak{B}_i\mathfrak{B}_j\mathfrak{B}_i$,  
$\mathfrak{B}_j\mathfrak{B}_i\mathfrak{B}_i$. 
By (\ref{Cartesiansquare}), it suffices to show 
\begin{equation}
f_*\mathbf{1}_{\mathfrak{B}_i\mathfrak{B}_i\mathfrak{B}_j}-2f_*\mathbf{1}_{\mathfrak{B}_i\mathfrak{B}_j\mathfrak{B}_i}+
f_*\mathbf{1}_{\mathfrak{B}_j\mathfrak{B}_i\mathfrak{B}_i}=\mathbf{1}_{\mathfrak{B}_j}.
\label{iSerreRelationGeo}
\end{equation}
From the next lemma, we see that it suffices to check over $(F_1,F_2)\in \mathfrak{R}(\ww)\times \mathfrak{R}(\ww)$ with 
$\dim (F_1(i)\cap F_2(i))\leq \dim F_1(i)-2$. 

\begin{Lemma}\label{SymplecticLineaAlgebra2} 
Let $V$ be a symplectiv vector space of dimension $2n$. 
For three Lagrangian subspaces $L_1,L_2,L_3$, 
denote $L_{ij}=L_i\cap L_j$ for $i,j\in \{1,2,3\}$, and $L_{123}=\cap L_i$. 
Assume $\dim L_{12}=\dim L_{23} = n-1$,
then 
\begin{equation}
\begin{array}{c}
\dim L_{13}=n-2\\
L_{123}=L_{13}
\end{array}\quad \text{ or }\quad 
\begin{array}{c}
\dim L_{13}= n-1\\
L_{12}=L_{13}=L_{23}
\end{array}\quad \text{ or }\quad 
L_1=L_3.
\end{equation}
\end{Lemma}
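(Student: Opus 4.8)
The plan is to reduce the entire statement to hyperplane geometry inside the fixed Lagrangian $L_2$, using the self-duality $L_i^\perp = L_i$ only at one decisive point. First I would set $H_1 = L_{12}$ and $H_3 = L_{23}$, which by hypothesis are codimension-one subspaces of $L_2$, and record the two elementary identities
\[
L_{123}=L_1\cap L_2\cap L_3=(L_1\cap L_2)\cap(L_2\cap L_3)=H_1\cap H_3,\qquad L_{13}\cap L_2=H_1\cap H_3 .
\]
In particular $H_1\cap H_3\subseteq L_{13}$, so $\dim L_{13}\ge\dim(H_1\cap H_3)$, and the whole argument branches on whether $H_1=H_3$.

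If $H_1=H_3$, then $H_1=L_{123}\subseteq L_{13}$ with $\dim H_1=n-1$, so the only possibilities are $\dim L_{13}=n-1$, which forces $L_{12}=L_{13}=L_{23}=H_1$ (the middle alternative), and $\dim L_{13}=n$, which forces $L_1=L_3$ (the third alternative).

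If $H_1\ne H_3$, then $\dim(H_1\cap H_3)=n-2$ and $H_1+H_3=L_2$, so $\dim L_{13}\ge n-2$, and there are three sub-cases. When $\dim L_{13}=n$ we get $L_1=L_3$; when $\dim L_{13}=n-2$ we get $L_{13}=H_1\cap H_3\subseteq L_2$, whence $L_{123}=L_{13}\cap L_2=L_{13}$ (the first alternative). The remaining sub-case $\dim L_{13}=n-1$ is the one to rule out, and this is the only place the symplectic form is genuinely used: there $L_{13}\cap L_2=H_1\cap H_3$ has dimension $n-2<n-1$, so I may choose $v\in L_{13}\setminus L_2$; from $v\in L_1$ Lagrangian, $\omega(v,H_1)=0$, and from $v\in L_3$ Lagrangian, $\omega(v,H_3)=0$, hence $\omega(v,H_1+H_3)=\omega(v,L_2)=0$, i.e. $v\in L_2^\perp=L_2$, a contradiction.

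I do not expect a serious obstacle here; the main care needed is in organizing the case analysis so that each of the three stated conclusions is produced exactly once, and in checking that "$\dim L_{13}$ cannot equal $n-1$" is precisely what remains in the $H_1\ne H_3$ branch. The short contradiction argument in the last sub-case, which is where "Lagrangian" (as opposed to merely "$n$-dimensional isotropic") does real work, is the only step worth writing out in full.
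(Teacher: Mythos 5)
Your proof is correct, and it takes a genuinely different route from the paper at the one step where the symplectic structure matters. The paper argues by symplectic reduction: after observing $\dim L_{123}\ge n-2$, it restricts to the case $L_{12}\ne L_{23}$, passes (implicitly via $L_{123}^{\perp}/L_{123}$) to $n=2$ with $L_{123}=0$, chooses explicit vectors $x,y,z,w$ spanning the three Lagrangians, and reads off their linear independence from the Gram matrix of $\omega$; maximal isotropy enters through $\omega(z,y)\ne0\ne\omega(x,w)$. You instead stay in general dimension and argue directly: any $v\in L_{13}$ is $\omega$-orthogonal to $H_1\subseteq L_1$ and to $H_3\subseteq L_3$, hence to $H_1+H_3=L_2$, hence lies in $L_2^{\perp}=L_2$; this kills the $\dim L_{13}=n-1$ sub-case when $H_1\ne H_3$ without any choice of basis. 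Your version buys a cleaner, coordinate-free contradiction and a more complete case analysis --- the paper leaves the branch $L_{12}=L_{23}$ (your $H_1=H_3$ case, which produces the middle alternative) entirely implicit --- while the paper's reduction to $n=2$ has the minor virtue of making the configuration of the three Lagrangians completely explicit. Both proofs are sound; no gap in yours.
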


\begin{proof}
Assume $L_1\neq L_3$. 
Since $\dim L_{123}\geq \dim L_{12} +\dim L_{23}-\dim (L_{12}+L_{23})\geq 
\dim L_{12} +\dim L_{23}-\dim L_{2}=
n-2$, $\dim L_{123}\geq n-2$. 
Assume $L_{12}\neq L_{23}$, then $\dim L_{123}=n-2$ and $L_{12}+L_{23}=L_2$. 
Without loss of generality, we can assume $L_{123}=0$ i.e. $n=2$. 
Assume $L_{12}=\mathbb{C}x$, and $L_{23}=\mathbb{C}y$. 
Then $L_{2}=L_{12}+\mathbb{C}y$ and $L_1=L_{23}+\mathbb{C}x$. Assume further that $L_1=L_{12}+\mathbb{C}z$ and $L_3=L_{23}+\mathbb{C}w$. We will show $x,y,z,w$ are linearly independent, then $L_1\cap L_3=L_{123}=0$.

Firstly, since $L_1,L_2,L_3$ are Lagrangian, 
$\omega(x,y)=\omega(x,z)=\omega(y,w)=0$.
Secondly, since Langriang is maximal isotropic subspaces, $\omega(z,y)\neq 0\neq \omega(x,w)$. 
Thus
$$\left(\begin{matrix}
\omega(x,x)&\omega(x,y)&\omega(x,z)&\omega(x,w)\\
\omega(y,x)&\omega(y,y)&\omega(y,z)&\omega(y,w)\\
\omega(z,x)&\omega(z,y)&\omega(z,z)&\omega(z,w)\\
\omega(w,x)&\omega(w,y)&\omega(w,z)&\omega(w,w)
\end{matrix}\right)\in 
\left(\begin{matrix}
0&0&0&\mathbb{C}^{\times}\\
0&0&\mathbb{C}^{\times}&0\\
0&\mathbb{C}^\times&0&\omega(z,w)\\
\mathbb{C}^\times&0&\omega(w,z)&0
\end{matrix}\right)$$
This shows the linear independence. \end{proof}


\begin{Prop}
At the point $(F_1,F_2)$ with $F_1(i)=F_2(i)$, the relation (\ref{iSerreRelationGeo}) holds. 
\end{Prop}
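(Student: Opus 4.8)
The plan is to fix a point $(F_1,F_2)$ with $F_1(i)=F_2(i)=:L$ and evaluate each of the three pushforwards in (\ref{iSerreRelationGeo}) at it. Since $\sigma(i)=i$, the space $K_RW(i)$ carries the alternating form $\omega$ of Corollary \ref{symplecticformoverKRW} and $L=L^\perp$ is a Lagrangian. First I would describe the fibre of $f$ over $(F_1,F_2)$: along any chain in it the two $\mathfrak{B}_i$-steps alter only the vertex-$i$ datum and the single $\mathfrak{B}_j$-step only the $j,\sigma(j)$-data, so the vertex-$i$ parts of the chain run $L\to L'\to L$ for an ``intermediate'' Lagrangian $L'$ with $\dim(L\cap L')=\dim L-1$, while $F_1$ and $F_2$ can differ only at $j$ and $\sigma(j)$ and only via a $\mathfrak{B}_j$-relation. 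Hence, if $(F_1,F_2)\notin\mathfrak{B}_j$ (or more generally if $F_1,F_2$ disagree somewhere other than $j,\sigma(j)$), all three fibres are empty and both sides of (\ref{iSerreRelationGeo}) vanish; it remains to treat $(F_1,F_2)\in\mathfrak{B}_j$, where the right-hand side equals $1$.

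Assuming $(F_1,F_2)\in\mathfrak{B}_j$, I would check that each intermediate representation in a chain is uniquely determined once $L'$ is chosen, so that each fibre becomes the variety of admissible $L'$. The admissibility of $L'$ amounts, for each representation $F$ occurring, to replacing $F(i)$ by $L'$; using the description in the proof of Corollary \ref{computationoffibre2} together with the compatibility $\omega(h(x),y)=\omega(x,\sigma(\bar h)y)$, one shows this is equivalent to the single condition $\mathcal{I}(F)\subseteq L'$, the outgoing conditions $h(L')\subseteq F(j)$ being automatic because $L'$ is Lagrangian. Tracking which of $F_1,F_2$ feeds the $\mathfrak{B}_i$-steps in each ordering, the admissible $L'$ are: for $f_*\mathbf{1}_{\mathfrak{B}_i\mathfrak{B}_i\mathfrak{B}_j}$ the Lagrangians adjacent to $L$ containing $\mathcal{I}(F_1)$; for $f_*\mathbf{1}_{\mathfrak{B}_j\mathfrak{B}_i\mathfrak{B}_i}$ those containing $\mathcal{I}(F_2)$; for the middle term $f_*\mathbf{1}_{\mathfrak{B}_i\mathfrak{B}_j\mathfrak{B}_i}$ those containing $\mathcal{I}(F_1)+\mathcal{I}(F_2)$. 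By Lemmas \ref{Difference1iSerre} and \ref{specialintypeA} we may, after relabelling $F_1\leftrightarrow F_2$, assume $\mathcal{I}(F_1)\subsetneq\mathcal{I}(F_2)$ with codimension one, so $\mathcal{I}(F_1)+\mathcal{I}(F_2)=\mathcal{I}(F_2)$; then with $m_k=\dim L-\dim\mathcal{I}(F_k)$ and $m_2=m_1-1$, Lemma \ref{SymplecticLineaAlgebra1} applied in $\mathcal{I}(F_k)^\perp/\mathcal{I}(F_k)$ (with $L/\mathcal{I}(F_k)$ as reference Lagrangian) gives the three Euler characteristics $m_1$, $m_2$, $m_2$.

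The alternating sum is then $m_1-2m_2+m_2=m_1-m_2=1=\mathbf{1}_{\mathfrak{B}_j}(F_1,F_2)$; the opposite labelling $\mathcal{I}(F_2)\subsetneq\mathcal{I}(F_1)$ yields the same total by the symmetric computation, and the degenerate subcase $\mathcal{I}(F_2)=L$ (where $m_2=0$ and two of the fibres are empty) still gives $1-0+0=1$. I expect the main difficulty to be the bookkeeping in the second paragraph: determining precisely which of $\mathcal{I}(F_1),\mathcal{I}(F_2)$ constrains the intermediate Lagrangian in each of the three orderings while simultaneously keeping track of the two vertices $j,\sigma(j)$ moved by the lone $\mathfrak{B}_j$-step, and proving that the outgoing admissibility at $i$ is automatic. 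This is exactly where the symplectic form, rather than a bilinear form as in the ordinary Serre relation, is indispensable, and where Lemmas \ref{Difference1iSerre} and \ref{specialintypeA} carry the real weight, since --- as the examples preceding this proposition show --- one cannot tell a priori which of $\mathcal{I}(F_1),\mathcal{I}(F_2)$ is the larger.
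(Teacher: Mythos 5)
Your proposal is correct and follows essentially the same route as the paper: reduce to $(F_1,F_2)\in\mathfrak{B}_j$, identify the three fibres as the Lagrangians adjacent to $L=F_1(i)$ containing $\mathcal{I}(F_1)$, $\mathcal{I}(F_1)+\mathcal{I}(F_2)=\mathcal{I}(F_1+F_2)$ and $\mathcal{I}(F_2)$ respectively, compute their Euler characteristics by Lemma \ref{SymplecticLineaAlgebra1}, and conclude from Lemmas \ref{specialintypeA} and \ref{Difference1iSerre}. The only cosmetic difference is that you normalize via the strict inclusion $\mathcal{I}(F_1)\subsetneq\mathcal{I}(F_2)$ before summing, whereas the paper expresses the alternating sum directly as $\dim\mathcal{I}(F_1+F_2)-\dim\mathcal{I}(F_1\cap F_2)$; your explicit justification that the outgoing conditions at $i$ are automatic for Lagrangian $X$ is a point the paper leaves to its diagrams.
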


\begin{proof}
The conditions of the first three fibres can be simplified as follows. 
$$\xymatrix@!=1pc{\ar@{}[dr];[r]|{X}="x"
F_1(i)\ar[r]\ar"x"\ar@{..}[d]|{\cap\shortmid}& F_1(i)
\ar@{-}"x"|{1}&F_1(\sigma(i))\ar[l]\ar"x"\ar@{..}[d]|{\cup\shortmid}\\
F_2(i)\ar[ur]&&F_2(\sigma(i))\ar[ul]
}\qquad 
\xymatrix@!=1pc{\ar@{}[dr];[r]|{X}="x"
F_1(i)\ar[r]\ar"x"\ar@{..}[d]|{\cap\shortmid}& F_1(i)
\ar@{-}"x"|{1}&F_1(\sigma(i))\ar[l]\ar"x"\ar@{..}[d]|{\cup\shortmid}\\
F_2(i)\ar[ur]\ar"x"&&F_2(\sigma(i))\ar[ul]\ar"x"
}\qquad 
\xymatrix@!=1pc{\ar@{}[dr];[r]|{X}="x"
F_1(i)\ar[r]\ar@{..}[d]|{\cap\shortmid}& F_1(i)
\ar@{-}"x"|{1}&F_1(\sigma(i))\ar[l]\ar@{..}[d]|{\cup\shortmid}\\
F_2(i)\ar[ur]\ar"x"&&F_2(\sigma(i))\ar[ul]\ar"x"
}$$
Thus when $(F_1,F_2)\notin \mathfrak{B}_j$, the fibres are all empty. So it suffices to consider the case $(F_1,F_2)\in \mathfrak{B}_j$. 
The fibres are the variety of Lagrangian subspaces $X$ of $K_RW(i)$
such that 
$$\dim(X\cap F_1(i))=\dim F_1(i)-1,$$
containing $\mathcal{I}(F_1)$, $\mathcal{I}(F_1+F_2)$ and $\mathcal{I}(F_2)$ respectively. 
By Lemma \ref{SymplecticLineaAlgebra1}, the Euler characteristic is exactly the codimension of $\mathcal{I}$ in $X$. Thus the left-hand-side of (\ref{iSerreRelationGeo}) is $\dim \mathcal{I}(F_1+F_2)-\dim\mathcal{I}(F_1\cap F_2)$ by Lemma \ref{specialintypeA}. The result follows from \ref{Difference1iSerre}. 
\end{proof}

\begin{Prop}At the point $(F_1,F_2)$ with $\dim (F_1(i)\cap F_2(i))=\dim F_1(i)-1$, the relation (\ref{iSerreRelationGeo}) holds. 
\end{Prop}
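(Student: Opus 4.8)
The plan is to run the same fibre-by-fibre bookkeeping as in the previous proposition, but now over a point $(F_1,F_2)$ where $F_1(i)$ and $F_2(i)$ genuinely differ by one dimension. First I would record the local picture: write $X$ for the would-be intermediate subspace between $F_1(i)\cap F_2(i)$ and $F_1(i)+F_2(i)$ (which now has codimension $2$ inside $F_1(i)+F_2(i)$, since $\dim(F_1(i)\cap F_2(i))=\dim F_1(i)-1$ forces $\dim(F_1(i)+F_2(i))=\dim F_1(i)+1$). Each of the three fibres of $f$ over $(F_1,F_2)$ is again a variety of Lagrangian subspaces $X\subseteq K_RW(i)$ satisfying one-dimensional incidence conditions with $F_1(i)$ and $F_2(i)$ together with a containment of some $\mathcal{I}(F_\bullet)$-type subspace; I would translate the diagrammatic conditions for $\mathfrak{B}_i\mathfrak{B}_i\mathfrak{B}_j$, $\mathfrak{B}_i\mathfrak{B}_j\mathfrak{B}_i$, $\mathfrak{B}_j\mathfrak{B}_i\mathfrak{B}_i$ exactly as in the previous proof, noting that $(F_1,F_2)\in\mathfrak{B}_j$ is again forced for any fibre to be nonempty.

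The key technical input is Lemma \ref{SymplecticLineaAlgebra2}: the three relevant Lagrangians here are $F_1(i)$, $F_2(i)$ and the image of $X$ (or rather, the three subspaces entering the three correspondences are governed by intersections of $F_1(i)$, $F_2(i)$ and a third Lagrangian coming from the $j$-move), and the hypothesis $\dim(F_1(i)\cap F_2(i))=\dim F_1(i)-1$ is exactly the hypothesis $\dim L_{12}=n-1$ of that lemma. So I would apply Lemma \ref{SymplecticLineaAlgebra2} to split into the three cases it produces: the "generic" case where $\dim L_{13}=n-2$ and $L_{123}=L_{13}$; the case $L_{12}=L_{13}=L_{23}$; and the degenerate case $L_1=L_3$ (the last of which I expect to be excluded by the $\iota$Hecke constraints, or to reduce to the previous proposition's situation). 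In each case one computes each of the three fibres to be $\mathbb{C}P^1$, a point, or empty, using Lemma \ref{SymplecticLineaAlgebra1} for the Euler characteristic of a Lagrangian incidence variety exactly as before; one then checks that the alternating sum $\chi(\text{first})-2\chi(\text{second})+\chi(\text{third})$ equals the value of $\mathbf{1}_{\mathfrak{B}_j}$ at $(F_1,F_2)$, which is $1$ when $(F_1,F_2)\in\mathfrak{B}_j$ and $0$ otherwise, matching (\ref{iSerreRelationGeo}).

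I would also need the analogue of Lemma \ref{specialintypeA} / Lemma \ref{Difference1iSerre} at this point, namely to pin down how $\mathcal{I}(F_1)$, $\mathcal{I}(F_2)$ and $\mathcal{I}(F_1\cap F_2)$ sit relative to the Lagrangian $X$ when $F_1(i)\neq F_2(i)$; this is what makes the containment conditions in the three fibre descriptions comparable and is where the symplectic self-adjointness $\omega(h(x),y)=\omega(x,\sigma(\bar h)y)$ of Corollary \ref{symplecticformoverKRW} re-enters. The main obstacle I anticipate is precisely the case analysis of Lemma \ref{SymplecticLineaAlgebra2} applied in this setting: one must be careful to identify which three Lagrangians play the roles of $L_1,L_2,L_3$ (the $j$-direction data forces one of them), and then verify that the "middle" case $L_{12}=L_{13}=L_{23}$ and the degenerate case $L_1=L_3$ either cannot occur under the dimension hypothesis $\dim(F_1(i)\cap F_2(i))=\dim F_1(i)-1$ together with $(F_1,F_2)\in\mathfrak{B}_j$, or contribute zero to the alternating sum. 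Once that trichotomy is resolved, the Euler-characteristic arithmetic is routine and parallel to the $\sigma(i)\neq i$ Serre relation already proved.
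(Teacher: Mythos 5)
You have correctly located the key input (Lemma \ref{SymplecticLineaAlgebra2}) and the overall fibrewise Euler-characteristic strategy, but the proposal defers exactly the step that constitutes the proof --- ``resolving the trichotomy'' --- and the way you set up the lemma would prevent you from resolving it. The three Lagrangians should be taken as $L_1=F_1(i)$, $L_3=F_2(i)$, and $L_2=X$ the intermediate Lagrangian produced by the two $\mathfrak{B}_i$-moves; the hypotheses $\dim L_{12}=\dim L_{23}=n-1$ of the lemma are supplied by those two moves, \emph{not} by the assumption $\dim(F_1(i)\cap F_2(i))=\dim F_1(i)-1$. That assumption is instead the constraint $\dim L_{13}=n-1$ together with $L_1\neq L_3$, and feeding it into the lemma's conclusion forces the middle case $L_{12}=L_{13}=L_{23}$, i.e. $X\cap F_1(i)=X\cap F_2(i)=F_1(i)\cap F_2(i)$. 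Hence every candidate $X$ lies in the pencil of Lagrangians containing the fixed $(n-1)$-dimensional isotropic subspace $F_1(i)\cap F_2(i)$ with the two points $F_1(i)$ and $F_2(i)$ removed; each of the three fibres is therefore either empty or a $\mathbb{C}P^1$ with two points deleted, and in both cases has Euler characteristic $0$. Your predicted outcome that the fibres are ``$\mathbb{C}P^1$, a point, or empty'' is not what happens here, and an alternating sum of such values would not visibly vanish.

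A second point you omit: at such $(F_1,F_2)$ the right-hand side of (\ref{iSerreRelationGeo}) is automatically $0$, since $\ddim F_1/F_1\cap F_2$ has a nonzero $i$-component and so $(F_1,F_2)\notin\mathfrak{B}_j$. Without observing this, your bookkeeping ``the alternating sum equals $1$ when $(F_1,F_2)\in\mathfrak{B}_j$'' leaves open a case that does not occur. Once both observations are in place the proposition is immediate, which is why the paper's proof is a single sentence; as written, your proposal identifies the right tool but does not yet close the argument.
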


\begin{proof} 
By Lemma \ref{SymplecticLineaAlgebra2}, the fibre is empty or $\mathbb{C}P^1$ deleting two points, whose Euler characteristics are both $0$. 
\end{proof}

\begin{Lemma}\label{SymplecticLineaAlgebra3}
Let $V$ be a symplectic vector space of dimension $2n$. For two Lagangian subspaces $L_1$ and $L_2$ with $\dim (L_1\cap L_2)=n-2$. 
Then the choice of Lagrangian subspaces $L$ such that $\dim(L_1\cap V)=\dim(L_2\cap V)=n-1$ are in one-to-one correspondence to the choice of subspaces $U$ between $L_1$ and $L_1\cap L_2$ with $\dim U=n-1$. 
\end{Lemma}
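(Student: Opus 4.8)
The plan is to write down the bijection and its inverse explicitly. Throughout, $\omega$ denotes the symplectic form and $X^\perp$ the symplectic annihilator of a subspace $X\subseteq V$. In one direction, I would send a Lagrangian $L$ with $\dim(L\cap L_1)=\dim(L\cap L_2)=n-1$ to $U:=L\cap L_1$. This visibly satisfies $U\subseteq L_1$ and $\dim U=n-1$, so the only point to check is $L_1\cap L_2\subseteq U$. This is exactly where Lemma \ref{SymplecticLineaAlgebra2} enters: applying it to the triple $(L_1,L,L_2)$, whose first two pairwise intersections have dimension $n-1$, the hypothesis $\dim(L_1\cap L_2)=n-2$ excludes the last two alternatives of that lemma, so $L_1\cap L\cap L_2=L_1\cap L_2$, whence $L_1\cap L_2\subseteq L\cap L_1=U$.

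In the other direction, given $U$ with $L_1\cap L_2\subseteq U\subseteq L_1$ and $\dim U=n-1$, I would set $L:=U+(L_2\cap U^\perp)$. Since $U\subseteq L_1$ and $L_1\cap L_2\subseteq U$, one has $L_2\cap U=L_1\cap L_2$, of dimension $n-2$; hence $(L_2\cap U^\perp)^\perp=L_2+U$ has dimension $n+1$, so $\dim(L_2\cap U^\perp)=n-1$, and $L_2\cap U^\perp\not\subseteq L_1$ (otherwise it would lie in $L_1\cap L_2$). Because $U$ is isotropic one has $U\subseteq U^\perp$, so $U\cap(L_2\cap U^\perp)=U\cap L_2=L_1\cap L_2$, giving $\dim L=(n-1)+(n-1)-(n-2)=n$; and $L$ is isotropic since $U$ and $L_2\cap U^\perp$ are each isotropic and $\omega(U,U^\perp)=0$. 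So $L$ is Lagrangian. Finally $L\cap L_1\supseteq U$ but cannot equal $L_1$ (that would force $L=L_1$, hence $L_2\cap U^\perp\subseteq L_1$), so $L\cap L_1=U$; likewise $L\cap L_2=L_2\cap U^\perp$ has dimension $n-1$ (else $L=L_2$, forcing $U\subseteq L_1\cap L_2$, impossible for dimension reasons). Thus $L$ is admissible.

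It then remains to check the two assignments are mutually inverse. One composite is immediate from the last sentence: $L\cap L_1=U$. For the other, start from an admissible $L$ and put $U=L\cap L_1$ (a legitimate input by the first paragraph); then $U\subseteq L$ forces $L\subseteq U^\perp$, so $L\cap L_2\subseteq L_2\cap U^\perp$, and as both have dimension $n-1$ they coincide, giving $U+(L_2\cap U^\perp)=U+(L\cap L_2)\subseteq L$, with equality by dimension count. This completes the bijection.

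I expect the only substantive ingredient to be Lemma \ref{SymplecticLineaAlgebra2}; conceptually it says that after replacing $V$ by $(L_1\cap L_2)^\perp/(L_1\cap L_2)$ the statement reduces to the $4$-dimensional case, which is the classical fact that a Lagrangian plane meeting two transverse Lagrangian planes each in a line is determined by either of those lines (and indeed the explicit construction above is just the lift of that picture). Everything else is routine linear algebra with dimensions and annihilators; the one recurring subtlety is ruling out the degenerate cases $L=L_1$ and $L=L_2$, which is always achieved by invoking $\dim(L_1\cap L_2)=n-2$.
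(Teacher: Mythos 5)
Your proof is correct and rests on exactly the same correspondence $U=L\cap L_1$, $L=U+(L_2\cap U^{\perp})$ that the paper writes down; the paper verifies it by reducing to the case $n=2$, $L_1\cap L_2=0$, whereas you check all the dimension counts directly in general and invoke Lemma \ref{SymplecticLineaAlgebra2} to see that any admissible $L$ contains $L_1\cap L_2$ (a point the paper's reduction leaves implicit). Both arguments are sound; yours is simply the more detailed rendering of the same idea.
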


\begin{proof} 
The correspondence is given by $U=L\cap L_1$ and $L_1=U+(U^{\perp}\cap L_2)$. 
It suffices to prove when $n=2$ and $L_1\cap L_2=0$. 
Assume $U=\mathbb{C}u$. Then we can find a nonzero element $v\in L_2$ such that $\omega(u,v)=0$, since $L_2$ has dimension $2$. This choice is unique up to scalar, otherwise $\omega(u,L_2)=0$, which implies $u\in L_2$. 
\end{proof}

\begin{Prop}At the point $(F_1,F_2)$ with $\dim (F_1(i)\cap F_2(i))=\dim F_1(i)-2$, the relation (\ref{iSerreRelationGeo}) holds. 
\end{Prop}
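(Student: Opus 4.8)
The plan is to evaluate both sides of (\ref{iSerreRelationGeo}) at $(F_1,F_2)$. Since $\dim(F_1(i)\cap F_2(i))=\dim F_1(i)-2$ forces $F_1(i)\neq F_2(i)$, the pair $(F_1,F_2)$ is not in $\mathfrak{B}_j$, so the right-hand side $\mathbf{1}_{\mathfrak{B}_j}(F_1,F_2)=0$, and it remains to show $\chi(\Phi_1)-2\chi(\Phi_2)+\chi(\Phi_3)=0$, where $\Phi_1,\Phi_2,\Phi_3$ are the fibres over $(F_1,F_2)$ of the maps $f$ attached to $\mathfrak{B}_i\mathfrak{B}_i\mathfrak{B}_j$, $\mathfrak{B}_i\mathfrak{B}_j\mathfrak{B}_i$, $\mathfrak{B}_j\mathfrak{B}_i\mathfrak{B}_i$. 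Write $P=F_1(i)\cap F_2(i)$; for the pair $(F_1,F_2)$ under consideration the inclusions $F_1(j)\subseteq F_2(j)$ and $F_2(\sigma(j))\subseteq F_1(\sigma(j))$ are of codimension one, while $F_1(\sigma(j))=F_1(j)^{\perp}$ and $F_2(\sigma(j))=F_2(j)^{\perp}$ since $F_1,F_2$ are $\sigma$-fixed.

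First I would describe the fibres. In each of the three chains the datum at vertex $i$ changes by two $\mathfrak{B}_i$-moves and is unchanged by the $\mathfrak{B}_j$-move, so Lemma \ref{SymplecticLineaAlgebra2} (applied to $L_1=F_1(i)$, $L_3=F_2(i)$, $L_2$ the intermediate Lagrangian) gives $P\subseteq L_2$, and by Lemma \ref{SymplecticLineaAlgebra3} such Lagrangians $X$ form a $\mathbb{P}^1$. The remaining intermediate data at $j,\sigma(j)$ are then forced, and the only constraints are that the intermediate objects be honest subrepresentations of $K_RW$. Using the identity $\omega(h(x),y)=\omega(x,\sigma(\bar h)(y))$ of Corollary \ref{symplecticformoverKRW} — so that for a Lagrangian $X$ one has $h(X)\subseteq F(j)\iff\sigma(\bar h)(F(j)^{\perp})\subseteq X$ — these conditions collapse, and I expect to obtain
\begin{align*}
\Phi_1 &\cong \mathbb{P}^1\ \text{if }\mathcal{I}(F_1)\subseteq P,\ \text{otherwise }\emptyset;\\
\Phi_3 &\cong \mathbb{P}^1\ \text{if }\mathcal{I}(F_2)\subseteq P,\ \text{otherwise }\emptyset;\\
\Phi_2 &\cong \{X\in\mathbb{P}^1:\mathcal{I}(F_1+F_2)\subseteq X\},
\end{align*}
where $\mathcal{I}(F_1+F_2)=\mathcal{I}(F_1)+\mathcal{I}(F_2)$, and the two conditions $\mathcal{I}(F_\bullet)\subseteq P$ are precisely the requirements that the intermediate representation adjacent to the $\mathfrak{B}_j$-step of the respective chain exist.

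Next I would run the count. Because $F_1(j)\subseteq F_2(j)$, $F_2(\sigma(j))\subseteq F_1(\sigma(j))$ and $F_1(i)\cap F_2(i)=P$, one gets $\bar h(F_1(j))\subseteq P$ and $\sigma(\bar h)(F_2(\sigma(j)))\subseteq P$, hence the image $\bar b$ of $\sigma(\bar h)(F_1(\sigma(j)))$ in $F_1(i)/P$ and the image $\bar a$ of $\bar h(F_2(j))$ in $F_2(i)/P$ have dimension at most one, and $\mathcal{I}(F_1)\subseteq P\iff\bar b=0$, $\mathcal{I}(F_2)\subseteq P\iff\bar a=0$. Identifying $(P^{\perp}/P,\omega)$ with a $4$-dimensional symplectic space with transverse Lagrangian planes $\ell_1=F_1(i)/P$, $\ell_2=F_2(i)/P$, a point of the $\mathbb{P}^1$ is a Lagrangian plane meeting each $\ell_k$ in a line, the image of $\mathcal{I}(F_1+F_2)$ equals $\bar b\oplus\bar a$, and $\Phi_2$ is all of $\mathbb{P}^1$, a single point, or empty according as $\bar b\oplus\bar a$ is zero, a line, or a plane — in the last case it is a point if that plane is isotropic (i.e.\ $\omega(\bar b,\bar a)=0$) and empty otherwise. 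The four cases $\bar a=\bar b=0$, $\bar a=0\neq\bar b$, $\bar a\neq0=\bar b$, $\bar a\neq0\neq\bar b$ give alternating sums $2-4+2$, $0-2+2$, $2-2+0$, $0-2\chi(\Phi_2)+0$; the first three vanish.

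The main obstacle is the last case, $\bar a\neq0\neq\bar b$, where one must show $\Phi_2=\emptyset$, equivalently $\omega(\bar b,\bar a)\neq0$, i.e.\ $\omega(\bar h(u),\sigma(\bar h)(v))\neq0$ for $u\in F_2(j)\setminus F_1(j)$ and $v\in F_1(\sigma(j))\setminus F_2(\sigma(j))$. One has $\omega(u,v)\neq0$, since $v\in F_1(j)^{\perp}\setminus F_2(j)^{\perp}$ and $F_2(j)=F_1(j)+\mathbb{C}u$; and the hypothesis $\bar a\neq0\neq\bar b$ says exactly that $(u,v)$ is not of the form $(h(x),\sigma(h)(x))$ for $x\in K_RW(i)$ (otherwise $\bar h(u)=\bar h h(x)=\sigma(\bar h)\sigma(h)(x)=\sigma(\bar h)(v)$ — using the commutative relation $\bar h h=\sigma(\bar h)\sigma(h)$ at the self-dual vertex $i$ — would lie in $F_1(i)\cap F_2(i)=P$, contradicting $\bar a\neq0$), so the argument of Lemma \ref{specialintypeA} does not apply verbatim. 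I would finish by propagating $\omega(\bar h(u),\sigma(\bar h)(v))$ through the identity of Corollary \ref{symplecticformoverKRW} and the commutative relations (\ref{commutativeRelation}) along the type-$A$ diagram, reducing it — via the explicit shape of $K_RW$, ultimately the non-degeneracy of the pairing $B_{ij}$ of Lemma \ref{SymmetricPairing} — to a nonzero multiple of $\omega(u,v)$; this is the technical heart. Once $\Phi_2=\emptyset$ in this case too, $\chi(\Phi_1)-2\chi(\Phi_2)+\chi(\Phi_3)=0$ in all four cases, proving (\ref{iSerreRelationGeo}) at $(F_1,F_2)$.
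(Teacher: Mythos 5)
Your setup is sound and, for three of the four cases, your counts agree with what is needed: the identification of the fibres via Lemmas \ref{SymplecticLineaAlgebra2} and \ref{SymplecticLineaAlgebra3}, the reformulation of nonemptiness of $\Phi_1,\Phi_3$ as $\mathcal{I}(F_1)\subseteq P$, resp.\ $\mathcal{I}(F_2)\subseteq P$, and the alternating sums $2-4+2$, $0-2+2$, $2-2+0$ are all correct. The problem is the fourth case, $\bar a\neq 0\neq\bar b$, which you yourself flag as ``the technical heart'' and do not prove. As it stands this is a genuine gap, not a routine verification: the quantity $\omega(\bar h(u),\sigma(\bar h)(v))$ equals, by the adjunction of Corollary \ref{symplecticformoverKRW}, $\omega(u,\sigma(h\bar h)(v))$ where $h\bar h$ is a length-two loop at $\sigma(j)$; the commutative relation (\ref{commutativeRelation}) rewrites that loop as a \emph{signed sum} of the loops through the other neighbour of $\sigma(j)$ and through the framing vertex, not as a scalar, so it does not reduce to ``a nonzero multiple of $\omega(u,v)$'' by any local manipulation. (Note also that the symmetry trick used in Lemma \ref{specialintypeA} is unavailable here precisely because, as you observe, $(u,v)$ is not of the form $(h(x),\sigma(h)(x))$.) So the one case where the $\iota$Serre relation could actually fail is the one you leave open.

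The case can be closed without any symplectic computation, using a tool already established in the paper. A point of $\Phi_2$ is a chain $F_1,G_1,G_2,F_2$ whose middle pair $(G_1,G_2)$ lies in $\mathfrak{B}_j$, and since the two $\mathfrak{B}_i$-moves only alter the data at the vertex $i$ itself, the neighbours of $i$ in $G_1$ (resp.\ $G_2$) carry the same spaces as in $F_1$ (resp.\ $F_2$); hence $\mathcal{I}(G_1)=\mathcal{I}(F_1)$ and $\mathcal{I}(G_2)=\mathcal{I}(F_2)$. Lemma \ref{Difference1iSerre} applied to $(G_1,G_2)$ then forces $\mathcal{I}(F_1)\subseteq\mathcal{I}(F_2)$ or $\mathcal{I}(F_2)\subseteq\mathcal{I}(F_1)$, and since $\mathcal{I}(F_\bullet)\subseteq F_\bullet(i)$, the smaller one lands in $F_1(i)\cap F_2(i)=P$ --- contradicting the hypothesis $\bar a\neq 0\neq\bar b$ of this case. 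This is exactly how the paper argues, and it is the reason Lemma \ref{Difference1iSerre} (whose proof is purely numerical, via $\epsilon_i$, $\phi_i$ and Corollary \ref{vvforsigmaquivervariety}) was set up in the first place; with it, the inequality $\omega(\bar b,\bar a)\neq 0$ that you are after follows a posteriori rather than being the input.
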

\begin{proof} By lemma \ref{SymplecticLineaAlgebra3}, the three fibres are the variety of $X$ such that 
$$F_1(i)\cap F_2(i) \subsetneq X\subsetneq F_1(i)$$
such that 
$$\begin{array}{c}
\mathcal{I}(F_1)\subseteq X,\\
\mathcal{I}(F_1)\subseteq Y; 
\end{array}\qquad 
\begin{array}{c}
\mathcal{I}(F_1)\subseteq X,\\
\mathcal{I}(F_2)\subseteq Y; 
\end{array}\qquad 
\begin{array}{c}
\mathcal{I}(F_2)\subseteq X,\\
\mathcal{I}(F_2)\subseteq Y; 
\end{array}$$
respectively, where $Y=X^\perp\cap F_2(i)$. 
Note that $X\cap Y = F_1(i)\cap F_2(i)$. 
There are four cases. 
\begin{itemize}
\item The case
$\mathcal{I}(F_1+F_2)\subseteq 
F_1(i)\cap F_2(i)$. 
Now, three fibres are the same. 
\item The case
$\mathcal{I}(F_1)\not\subseteq F_1(i)\cap F_2(i)\supseteq \mathcal{I}(F_2)$. 
The first fibre is empty. 
The second fibre is the point of the subspace $X=\mathcal{I}(F_1)+(F_1(j)\cap F_2(i))$. 
Actually, $X\subsetneq F_1(i)$ since 
$$
\dim\dfrac{\mathcal{I}(F_1)+(F_1(j)\cap F_2(i))}{F_1(j)\cap F_2(i)}
=\dim\dfrac{\mathcal{I}(F_1+F_2)+(F_1(j)\cap F_2(i))}{\mathcal{I}(F_2)+(F_1(j)\cap F_2(i))}\leq 1.$$
The last fibre is $\mathbb{C}P^1$ by Lemma \ref{SymplecticLineaAlgebra3}. 
\item The case
$\mathcal{I}(F_2)\not\subseteq F_1(i)\cap F_2(i)\supseteq \mathcal{I}(F_1)$ is similar. 
\item 
The case 
$\mathcal{I}(F_2)\not\subseteq F_1(i)\cap F_2(i)\not\supseteq \mathcal{I}(F_1)$. 
The fibres are all empty. 
The first and the last one are easy to show. 
For the second fibre, if it is not empty, then by the discussion above, the only choice is 
$X=\mathcal{I}(F_1)+(F_1(i)\cap F_2(j))$ with  
$Y=\mathcal{I}(F_2)+(F_1(i)\cap F_2(j))$. 
Note that $X$ and $Y$ do not have any inclusion relation between them, which contradicts to Lemma \ref{Difference1iSerre}. 
\end{itemize}
In particular, the relation (\ref{iSerreRelationGeo}) holds. 
\end{proof}

In all, we have the following proposition which finishes the proof of Theorem \ref{MainTh}. 

\begin{Prop} The relation (\ref{iSerreRelation}) holds. 
\end{Prop}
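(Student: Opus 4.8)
The plan is to derive the $\iota$Serre relation (\ref{iSerreRelation}) by assembling the three preceding propositions, exactly as in the parallel arguments for (\ref{EFrelation}) and (\ref{SerreRelation2}). First I would rewrite the relation in a more convenient form: expanding the double commutator gives
$$[B_i,[B_i,B_j]]=B_i^2B_j-2B_iB_jB_i+B_jB_i^2,$$
so (\ref{iSerreRelation}) is equivalent to the operator identity $B_i^2B_j-2B_iB_jB_i+B_jB_i^2=B_j$ on $\Fun(\mathfrak{R}(\ww))$. Applying the base-change formula (\ref{Cartesiansquare}) to the diagram defining $f$ on $\mathfrak{B}_i\mathfrak{B}_i\mathfrak{B}_j$, and to its analogues on $\mathfrak{B}_i\mathfrak{B}_j\mathfrak{B}_i$ and $\mathfrak{B}_j\mathfrak{B}_i\mathfrak{B}_i$, each of the three composite operators takes the form $\varphi\mapsto q_{1*}\big(f_*\mathbf{1}_{(-)}\cdot q_2^*\varphi\big)$, while $B_j=\pi_{j1*}\pi_{j2}^*$ is $\varphi\mapsto q_{1*}\big(\mathbf{1}_{\mathfrak{B}_j}\cdot q_2^*\varphi\big)$; so by linearity the relation follows from the identity of constructible functions (\ref{iSerreRelationGeo}) on $\mathfrak{R}(\ww)\times\mathfrak{R}(\ww)$.

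Next I would check (\ref{iSerreRelationGeo}) stratum by stratum. Because each $\mathfrak{B}_i$-step preserves the dimension at the vertex $i$ while the $\mathfrak{B}_j$-step does not involve $i$ (as $j\neq i$ and $\sigma(j)\neq i$), on the union of the supports of the four functions one has $\dim F_1(i)=\dim F_2(i)=:n$; moreover the $f$-fibre over $(F_1,F_2)$ in each fibre product is controlled by the intermediate Lagrangian $X\subseteq K_RW(i)$ lying between the two $\mathfrak{B}_i$-steps, which satisfies $\dim(X\cap F_1(i))=\dim(X\cap F_2(i))=n-1$ and is required to contain $\mathcal{I}(F_1)$, $\mathcal{I}(F_1+F_2)$, $\mathcal{I}(F_2)$ respectively. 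I would then stratify $\mathfrak{R}(\ww)\times\mathfrak{R}(\ww)$ by $k=\dim F_1(i)-\dim\big(F_1(i)\cap F_2(i)\big)$. For $k\geq 3$, Lemma \ref{SymplecticLineaAlgebra2} applied to the Lagrangians $F_1(i)$, $X$, $F_2(i)$ shows there is no admissible $X$, hence all three fibres are empty and the left side of (\ref{iSerreRelationGeo}) vanishes; the right side vanishes too, since membership in $\mathfrak{B}_j$ forces $F_1(i)=F_2(i)$, i.e. $k=0$. The remaining cases $k=0$, $k=1$, $k=2$ are precisely the contents of the three propositions established above. Putting the strata together yields (\ref{iSerreRelationGeo}), and therefore (\ref{iSerreRelation}).

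The only genuinely delicate step is the stratum $k=2$, handled by the last of the three propositions: there one must determine the relative position of $\mathcal{I}(F_1)$, $\mathcal{I}(F_2)$ and $F_1(i)\cap F_2(i)$ inside the Lagrangian $F_1(i)$, and the symplectic form on $K_RW$ from Corollary \ref{symplecticformoverKRW} is essential, entering through Lemmas \ref{specialintypeA}, \ref{SymplecticLineaAlgebra3} and \ref{Difference1iSerre} --- indeed the example computations show the dimension vector alone does not even decide which of $\mathcal{I}(F_1)$, $\mathcal{I}(F_2)$ is the larger. Once those inputs are in place, as they are here, the present proposition is just the bookkeeping that combines the strata, so I expect no further obstacle.
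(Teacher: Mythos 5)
Your proposal is correct and matches the paper's argument: the paper's entire Section ``Relations (II)'' is exactly this assembly --- reduction to the function identity (\ref{iSerreRelationGeo}) via (\ref{Cartesiansquare}), emptiness of the fibres when $\dim F_1(i)-\dim(F_1(i)\cap F_2(i))\geq 3$ by Lemma \ref{SymplecticLineaAlgebra2}, and the three codimension cases $k=0,1,2$ handled by the preceding propositions --- with the final proposition being precisely this bookkeeping. No gaps.
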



\section*{Declarations}

\subsection*{Conflict of Interest} The authors declare no competing interests.

\bibliography{sample}

\end{document}